\theoremstyle{plain}
\newtheorem{thm}{Theorem}
\newtheorem{Theo}[thm]{Theorem}
\newtheorem*{theor}{Theorem}
\newtheorem{cor}[thm]{Corollary}
\newtheorem{theorem}[thm]{Theorem}
\newtheorem{corollary}[thm]{Corollary}
\newtheorem{lemma}[thm]{Lemma}
\newtheorem{Prop}[thm]{Proposition}
\newtheorem{proposition}[thm]{Proposition}
\newtheorem*{mr}{Main Result}
\newtheorem*{observation}{Observation}
\newtheorem*{coroll}{Corollary}
\newtheoremstyle{rmk}
{9pt}{9pt}{}{}{\bfseries}{}{.5em}{}
\theoremstyle{rmk}
\newtheorem{rmk}[thm]{Remark}
\theoremstyle{alg}
\newtheoremstyle{question}
{9pt}{9pt}{}{}{\bfseries}{}{.5em}{}
\theoremstyle{question}
\numberwithin{equation}{section}
\numberwithin{thm}{section}
\numberwithin{figure}{section}
\theoremstyle{definition}
\newtheorem{definition}[thm]{Definition}
\newtheorem{defn}[thm]{Definition}
\newtheorem{example}[thm]{Example}
\newtheorem{remark}[thm]{Remark}
\newtheorem{Rmk}[thm]{Remark}
\newtheorem{Ex:}[thm]{Example}
\newcommand{\Rr}{\mathbb{R}}            
\newcommand{\Zz}{\mathbb{Z}}            
\newcommand{\XX}{\mathfrak{X}}          
\newcommand{\Lie}{\mathcal{L}}          
\newcommand{\gpd}{G \rightrightarrows M} 
\renewcommand{\H}{H}          
\renewcommand{\gg}{\mathfrak{g}}        
\newcommand{\R}{\mathbb{R}}
\newcommand{\bracket}{\{\cdot,\cdot\}} 
\newcommand{\jm}{(M, L, \{\cdot,\cdot\})} 
\title[Multiplicative Gray Stability]{Multiplicative Gray Stability}
\author[C. Angulo, M. A. Salazar, D. Sepe]{Camilo Angulo \qquad María
  Amelia Salazar \qquad Daniele Sepe}
\email{ca.angulo951@gmail.com}
\email{mariasalazar@id.uff.br}
\email{danielesepe@id.uff.br}
\address{Instituto de Matem\'atica e Estat\'istica, Universidade Federal Fluminense, Niter\'oi, Brazil.}
\date{\today}
\begin{document}

\keywords{Contact groupoids, Jacobi bundles, Poisson manifolds.}
\subjclass{53D35, 22A22, 53D17, 58H15}
\begin{abstract}
  In this paper we prove Gray stability for compact contact groupoids
  and we use it to prove stability results for deformations of the
  induced Jacobi bundles.
\end{abstract}

\maketitle


\section*{Introduction}
Lie groupoids endowed with multiplicative geometries have been used successfully to study singular
geometric structures on their bases (see, e.g.,
\cite{coste,dazord,cf_integrability_poisson,bcwz,cf_geometric,crainic_zhu,crainic_salazar,spencer_operators,PMCT,PMCT2}). Recently,
particular attention has been given to {\bf
  contact groupoids} and the induced Jacobi bundles on their
bases (see, e.g., 
\cite{dazord,kerbrat,crainic_salazar,dual_pairs,vitagliano_dirac,vitagliano_wade}). Contact
geometry is the odd-dimensional analog of symplectic geometry and it
arises naturally in the study of geometric PDEs (see, e.g.,
\cite{arnold_wave}). A contact structure is a maximally non-integrable
hyperplane distribution (see Definition
\ref{defn:contact_structure}). A foundational result in contact
geometry is {\bf Gray stability}: there
are no non-trivial deformations of a contact structure on a compact
manifold (see \cite[Theorem 5.2.1]{gray}). The
main result of this paper is a multiplicative version of Gray
stability, stated informally below (see Theorem \ref{thm:gray_dist}).

\begin{mr}
  A smooth 1-parameter family $\{H_\tau\}$ of multiplicative contact
  structures on a compact Lie groupoid $G$ over a connected manifold
  is trivial, i.e., there exists a smooth 1-parameter
  family $\{\Phi_\tau\}$ of automorphisms of $G$ with $\Phi_0 =
  \mathrm{id}$ such that $d\Phi_\tau(H_\tau) = H_0$ for all
  $\tau$.
\end{mr}

The above should be compared with the very recent
\cite[Theorem 1.2]{cms}, which describes the moduli space of
deformations of multiplicative symplectic forms on a compact
symplectic groupoid.

Combining our Main Result with the stability result for deformations of compact Lie groupoids
proved in \cite{DefLieGpds,Riemstacks}, the following holds (see Definitions \ref{defn:def_contact_gpd} and
\ref{defn:iso_contact}, and Corollary \ref{cor:stability}). 

\begin{coroll}
  Any deformation of a compact contact groupoid over a connected
  manifold is trivial.
\end{coroll}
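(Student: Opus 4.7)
The plan is to obtain the conclusion by a two-step trivialization, first straightening out the Lie groupoid structure and then applying the Main Result to kill the remaining contact deformation.

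More precisely, a deformation of the compact contact groupoid $(G_0,H_0)$ over the connected base $M_0$ produces a smooth 1-parameter family of contact groupoids $\{(G_\tau,H_\tau)\}$ over bases $\{M_\tau\}$, where all the groupoid structure maps and the hyperplane distributions depend smoothly on $\tau$. My first step is to forget the contact datum and focus on the underlying family of Lie groupoids. Since $G_0$ is compact, the stability theorem of \cite{DefLieGpds,Riemstacks} applies and yields a smooth 1-parameter family of Lie groupoid isomorphisms $\Psi_\tau \colon G_\tau \to G_0$ with $\Psi_0 = \mathrm{id}$. By continuity in $\tau$, each $M_\tau$ remains connected for small $\tau$, so we may assume throughout the deformation that the base is connected.

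Next, I would use the $\Psi_\tau$ to transport the contact datum back to the fixed Lie groupoid $G_0$. Since each $\Psi_\tau$ is a Lie groupoid morphism, the pushforward $\widetilde{H}_\tau := d\Psi_\tau(H_\tau)$ is a multiplicative hyperplane distribution on $G_0$; because being a contact structure is a pointwise open condition and $\Psi_\tau$ is a diffeomorphism, each $\widetilde{H}_\tau$ is again a multiplicative contact structure on $G_0$, and the family $\{\widetilde{H}_\tau\}$ depends smoothly on $\tau$ with $\widetilde{H}_0 = H_0$. This places us exactly in the hypotheses of the Main Result (Theorem \ref{thm:gray_dist}): we have a smooth 1-parameter family of multiplicative contact structures on the fixed compact Lie groupoid $G_0$ over a connected base.

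Applying the Main Result then produces a smooth 1-parameter family of automorphisms $\Phi_\tau$ of $G_0$ with $\Phi_0 = \mathrm{id}$ and $d\Phi_\tau(\widetilde{H}_\tau) = H_0$. The composition $\Phi_\tau \circ \Psi_\tau \colon G_\tau \to G_0$ is then a smooth 1-parameter family of Lie groupoid isomorphisms satisfying $d(\Phi_\tau\circ\Psi_\tau)(H_\tau) = d\Phi_\tau(\widetilde{H}_\tau) = H_0$ and reducing to the identity at $\tau = 0$. According to Definition \ref{defn:iso_contact}, this exhibits the deformation as trivial.

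The main conceptual obstacle is already handled by the Main Result: the multiplicative Gray stability step requires nontrivial work, since the standard Moser-type argument for classical Gray stability must be upgraded to produce automorphisms of the groupoid rather than mere diffeomorphisms. By contrast, the argument above is purely a matter of bookkeeping: the only routine checks are that $\Psi_\tau$ can indeed be chosen smoothly with $\Psi_0 = \mathrm{id}$ (which is precisely the content of the stability theorem from \cite{DefLieGpds,Riemstacks}) and that pushforward by a Lie groupoid isomorphism preserves multiplicativity of a distribution.
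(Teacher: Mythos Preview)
Your proposal is correct and follows essentially the same approach as the paper: the paper's proof combines Corollary~\ref{cor:strict_deformations_mult_structures} (which is exactly your first step---trivializing the Lie groupoid via Theorem~\ref{thm:stability_compact} and pushing forward the contact structures) with Theorem~\ref{thm:gray_dist}. One small remark: in the paper's notion of deformation (Definition~\ref{defn:deformations_Lie_gpd}) the base is $M_\tau = M$ as a manifold for all $\tau$, so connectedness of the base is automatic and your continuity argument is unnecessary.
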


Motivated by understanding stability phenomena in Poisson geometry
(see \cite{cf_stability}), recently Crainic, Fernandes and
Martínez-Torres have started an ambitious program to study
Poisson manifolds of compact types, i.e., those induced by proper symplectic groupoids. These
enjoy special properties and are, in some sense, rare (see
\cite{PMCT,PMCT2,mt,zwaan}). Contact groupoids induce {\em Jacobi bundles} on
their base manifolds. Jacobi bundles
can be thought of as infinite dimensional Lie algebras of `geometric
type' and include symplectic, contact and Poisson structures as
examples (see, e.g., \cite{kirillov}). Formally, a Jacobi bundle is a line bundle endowed with a local Lie
bracket on its space of sections (see Definition
\ref{defn:jac_str_manifolds}). The present work suggests that
studying Jacobi bundles that are induced by proper contact groupoids
is not only analogous, but also complementary, to the above program. To
illustrate this, we begin with the following remark (see Corollary
\ref{cor:proper_jacobi} for a precise statement). 

\begin{observation}
  A Jacobi bundle induced by a proper co-orientable contact groupoid
  comes from a Poisson bivector.
\end{observation}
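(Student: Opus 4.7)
The strategy is to exploit properness to pass from the co-orientable multiplicative contact distribution to a strictly multiplicative contact form; under the classical correspondence, strict multiplicativity then forces the induced base structure to be Poisson rather than a genuine Jacobi.

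Co-orientability means we may choose a globally defined contact form $\theta \in \Omega^1(G)$ with $\H = \ker\theta$. The multiplicativity of $\H$ does not in general force $\theta$ itself to be multiplicative, but (following the standard analysis of contact groupoids due to Dazord and Kerbrat) it produces a smooth function $r \colon G \to \R$ satisfying the 1-cocycle condition $r(gh) = r(g) + r(h)$ together with a twisted multiplicativity relation of the form
\begin{equation*}
  m^*\theta \;=\; \pi_1^*\theta \;+\; e^{\pi_1^*r}\,\pi_2^*\theta,
\end{equation*}
where $\pi_1, \pi_2 \colon G^{(2)} \to G$ are the projections from the space of composable pairs.

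Now properness enters. By the vanishing of differentiable groupoid cohomology for proper Lie groupoids (Crainic, via averaging against a Haar system), any $\R$-valued multiplicative function is a coboundary: there exists $h \in C^\infty(M)$ with $r = t^*h - s^*h$. Using the composability identity $s\circ\pi_1 = t\circ\pi_2$ on $G^{(2)}$, a short computation verifies that the conformally equivalent contact form $\tilde\theta \coloneqq e^{-t^*h}\theta$ is strictly multiplicative, i.e., $m^*\tilde\theta = \pi_1^*\tilde\theta + \pi_2^*\tilde\theta$, and still satisfies $\ker\tilde\theta = \H$ because the rescaling factor is positive.

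Finally, strictly multiplicative contact forms on Lie groupoids are known to integrate Poisson structures on the base (Dazord--Lichnerowicz--Marle, and in the present formalism Crainic--Salazar): in terms of the Jacobi pair $(\Lambda, E)$ obtained from the trivialization of $L$ dual to $\tilde\theta$, strict multiplicativity of $\tilde\theta$ forces the Reeb component $E$ to vanish, so that the bracket takes the form $\{f,g\} = \Lambda(df, dg)$ and is purely Poisson. The principal difficulty of the plan lies in the cohomological triviality of $r$ under mere properness (as opposed to compactness); the remaining steps are essentially bookkeeping once that input is in hand.
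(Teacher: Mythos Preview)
Your overall strategy matches the paper's: trivialize the Reeb cocycle using properness, rescale the contact form, and read off that the resulting base structure is Poisson. However, there is a genuine gap in the passage from the multiplicative contact distribution to the twisted multiplicativity relation you write down. What multiplicativity of $H$ actually produces is a Lie groupoid homomorphism $F\colon G \to \Rr^*$ with $m^*\theta = \pi_1^*\theta + (\pi_1^*F)\,\pi_2^*\theta$, and $F$ need not be positive, so it cannot in general be written as $e^r$ for a real-valued cocycle $r$. The paper constructs an explicit compact co-oriented contact groupoid over a connected base in which $F$ takes both signs, so this is not a hypothetical objection. When the base is connected, the sign $\sigma = \mathrm{sgn}(F)\colon G \to \{\pm 1\}$ is an invariant of $(G,H)$ that no positive rescaling $\theta \mapsto e^{-t^*h}\theta$ can remove, and consequently you cannot in general arrange for $\tilde\theta$ to be strictly multiplicative.

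The repair is exactly what the paper does: write $F = \sigma e^r$ with $r = \ln|F|$ the Reeb cocycle and $\sigma$ the locally constant sign homomorphism. Properness kills $r$, so after rescaling one obtains a $\sigma$-multiplicative contact form, i.e.\ one whose twisting function is $\sigma$ itself. Since $d\sigma \equiv 0$, the lemma you invoke at the end (that constant twisting forces the base Reeb field to vanish) still applies verbatim and yields the Poisson conclusion. So your argument goes through once you carry the sign along and weaken ``strictly multiplicative'' to ``$\sigma$-multiplicative with $\sigma$ locally constant''.
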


We intend to undertake a systematic study of
Jacobi bundles induced by proper contact groupoids in future papers. By the above Observation, a
deformation of a proper co-orientable contact groupoid induces a
deformation of a Jacobi bundle that can be seen as a
deformation of a Poisson bivector. We use
our Main Result to obtain the following characterization of such
deformations (see Theorem
\ref{thm:gray_poisson} for a precise statement).

\begin{theor}
  Let $\{\pi_\tau\}$ be a smooth 1-parameter family of Poisson
  bivectors on $M$ induced by a smooth 1-parameter family of
  co-orientable contact
  structures $\{H_\tau\}$ on a compact Lie groupoid $G$. Then there
  exists a diffeotopy $\{\phi_\tau\}$ of $M$ and a smooth 1-parameter
  family of positive Casimirs $\{a_\tau\}$ of $(M,\pi)$ such that
  $(\phi_\tau)_* \pi_\tau = a_\tau \pi$ for all $\tau$.
\end{theor}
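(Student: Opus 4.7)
The plan is to derive this theorem directly from the Main Result (Theorem \ref{thm:gray_dist}) by tracking how the resulting family of contact groupoid automorphisms descends to the base and interacts with the co-orientations that turn the induced Jacobi bundles into Poisson bivectors.

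First I would apply the Main Result to $\{H_\tau\}$ to obtain a smooth family of Lie groupoid automorphisms $\Phi_\tau \colon G \to G$ with $\Phi_0 = \mathrm{id}$ and $d\Phi_\tau(H_\tau) = H_0$. Each $\Phi_\tau$ covers a diffeomorphism $\phi_\tau$ of $M$, giving a diffeotopy with $\phi_0 = \mathrm{id}$. Since $\Phi_\tau$ is a groupoid isomorphism carrying $H_\tau$ to $H_0$, it is an isomorphism of contact groupoids $(G, H_\tau) \to (G, H_0)$ and therefore induces, over $\phi_\tau$, an isomorphism of the associated Jacobi line bundles $(L_\tau, \{\cdot,\cdot\}_\tau) \to (L_0, \{\cdot,\cdot\}_0)$.

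Next I would exploit the co-orientations. Co-orientability of $H_\tau$ supplies a smooth trivializing section $s_\tau$ of $L_\tau$, with respect to which $\{\cdot,\cdot\}_\tau$ is, by hypothesis, the Poisson bracket of $\pi_\tau$ on $C^\infty(M)$. Under the isomorphism above, $s_\tau$ is transported to a trivializing section of $L_0$ that differs from $s_0$ by a smooth positive function on $M$; this ratio defines $a_\tau \in C^\infty(M)$, with $a_0 = 1$. Since the two Poisson brackets on functions then encode the same intrinsic Jacobi bracket on $L_0$ written in two different trivializations, unwinding the definitions yields the identity $(\phi_\tau)_* \pi_\tau = a_\tau \pi$ on $M$.

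The main algebraic step, and the principal obstacle beyond the Main Result itself, is verifying that $a_\tau$ is a Casimir of $\pi$. For this I would invoke the gauge formula describing how a Jacobi bracket on a trivial line bundle changes under a rescaling of the trivializing section: if the bracket is Poisson with bivector $\pi$ in one trivialization, then in the trivialization rescaled by a positive function $a$ it takes the form
\[
\{F,G\}_{\text{new}} = (a\pi)(dF,dG) + F\,\pi^\sharp(da)(G) - \pi^\sharp(da)(F)\,G.
\]
Since by construction this new bracket agrees with the Poisson bracket of $(\phi_\tau)_* \pi_\tau = a_\tau \pi$, its first-order (symbol) part must vanish, forcing $\pi^\sharp(da_\tau) = 0$; equivalently, $a_\tau$ is a Casimir of $\pi$. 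Smoothness and positivity of $\{a_\tau\}$ follow directly from those of the trivializations $s_\tau$ and $s_0$, completing the proof.
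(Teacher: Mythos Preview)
Your proposal is correct and follows essentially the same route as the paper: apply multiplicative Gray stability to obtain contact groupoid isomorphisms $\Phi_\tau$, descend to Jacobi isomorphisms $(\phi_\tau, a_\tau)$ on the base via Remark~\ref{rmk:contact_jacobi}, and then observe that since both source and target Jacobi structures are Poisson, the gauge formula (equation~\eqref{eq:conf_iso} in the paper) forces $\pi^\sharp(da_\tau)=0$, i.e.\ $a_\tau$ is a Casimir. The only point you leave slightly implicit is why $a_\tau$ is \emph{positive} rather than merely nowhere vanishing: this does not follow ``directly from the trivializations $s_\tau$ and $s_0$'' alone, but from continuity of $\tau\mapsto a_\tau$ on the connected interval $I$ together with $a_0\equiv 1$; the paper likewise asserts positivity without spelling this out.
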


The above Theorem applied to a Lie-Poisson sphere in the dual of
a compact Lie algebra should be compared with \cite[Part (a) of
Theorem 1]{Ionut_spheres}, which is stronger but uses
infinite dimensional techniques (see Remark \ref{rmk:ionut}). \\

Our strategy to prove the Main Result is to consider first the
case in which the contact structures are co-orientable, i.e., given
by the kernel of 1-forms, and then the case in which they are not (see
Theorems \ref{CompactGray} and \ref{thm:main_not_triv}). In
the former, we use the Moser
trick, in analogy with the well-known proof of Gray stability (see,
e.g., \cite[Theorem 2.2.2]{Geiges}, and Theorem \ref{CompactGray} below). We look for the desired smooth 1-parameter of diffeomorphisms $\{\Phi_\tau\}$ of $G$ as the flow of a time-dependent
vector field $X_\tau$. Moreover, since we want $\{\Phi_\tau\}$ to be a family of
Lie groupoid automorphisms, $X_\tau$ must be {\em multiplicative},
i.e., $X_\tau : G \to TG$ need be a Lie groupoid homomorphism for all
$\tau$ (see
Section \ref{sec:contact-groupoids} for the Lie groupoid structure on
$TG$). We achieve this by finding a `good' smooth 1-parameter family
of multiplicative contact forms for $\{H_\tau\}$ (see Definition
\ref{defn:mult_contact_form} and Corollary
\ref{cor:existence_form} for a precise statement). Crucially, this
uses compactness of $G$, which implies that its differentiable
cohomology vanishes in all positive degrees by
\cite[Proposition 1]{crainic}. The above contact forms are multiplicative with
values in a representation of $G$ on the trivial
line bundle that is codified by a Lie groupoid homomorphism $\sigma: G \to
\{\pm 1\}$ (see Definition \ref{defn:mult_form}). This brings in some
small technicalities to prove that $X_\tau$ is multiplicative (see
Appendix \ref{sec:sigma-mult-forms}).

In order to deal with the case
in which the contact structures are not co-orientable, in Section
\ref{subsec:co-or_finite_cover} we introduce a simple, seemingly new
construction for such contact groupoids that we call the {\em
  co-orientable finite cover}. It is analogous to the co-orientable
double cover of a contact manifold (see Remark
\ref{rmk:co-orientable_double_cover}). Given a smooth 1-parameter
family of contact structures on a compact Lie groupoid
$G$, we consider the smooth 1-parameter family of co-orientable
contact structures given by their co-orientable finite covers. This
allows us to argue as above, making sure that the smooth 1-parameter
family of automorphisms of co-orientable finite covers 
descends to $G$ (see the proof of Theorem
\ref{thm:main_not_triv}). Aside from its use in this paper, we expect
that the co-orientable finite cover of contact groupoids will be useful
in the study of multiplicative contact structures that are not
co-orientable.

\subsection*{Outline} In
Section \ref{sec:cont-and-Jac} we discuss properties of contact
groupoids. In Sections \ref{subsec:cont-str}  --
\ref{sec:co-orient-cont}, we recall the basics of contact
structures and of (co-orientable) contact
groupoids. In Section
\ref{subsec:co-or_finite_cover}, we define and study the co-orientable finite
cover of a contact groupoid, which we use in the proof of our main
result. Section \ref{sec:deform-cont-group} formalizes
the notion of `smooth 1-parameter families of contact groupoids'
using deformations as in
\cite{DefLieGpds,Riemstacks} (see also \cite{cms}). In Section \ref{sec:deform-lie-group} we
recall the basics of deformations of Lie groupoids, while in Sections \ref{sec:strict-deform-cont} and
\ref{sec:deform-co-orient} we introduce deformations of
(co-orientable) contact
groupoids. Section \ref{sec:mult-vers-grays} proves our main
result, Theorem \ref{thm:gray_dist}. Sections
\ref{sec:cont-and-Jac} -- \ref{sec:mult-vers-grays} can be read
independently of the remaining sections. In Section
\ref{sec:examples-&-apps} we consider multiplicative Gray stability at
the level of objects. Sections
\ref{subsec:jac-mfds} and \ref{sec:Jacobi_manifolds} introduce Jacobi
bundles and recall how they are induced by contact groupoids. In
Section \ref{sec:at-level-objects-1}, we explain how our main result can be
used to study deformations of Jacobi bundles (see Theorems \ref{thm:gray_Jacobi}
and \ref{thm:gray_poisson}). Three families of examples of compact contact groupoids and their induced
Jacobi bundles are given in Section \ref{sec:three_examples}. Finally,
Appendix \ref{sec:sigma-mult-forms} deals with technical lemmas that
we use in the proof of our main result. 



\subsection*{Acknowledgments} The authors would like to thank Marius
Crainic, David Martínez-Torres and Pedro Frejlich for useful
conversations. C.A. was supported by FAPERJ grant E-26/202.439/2019. M.A.S. and D.S. were partly supported by
CNPq grant Universal 409552/2016-0. M.A.S was partly supported by CNPq grants PQ1 304410/2020-9 and Universal 28/2018, FAPERJ grants JCNE 262012932021 and ARC 262114122019, L'Or\'eal--UNESCO--ABC, and by the Serrapilheira Institute grant Serra-1912-3205. D.S. was partly supported by CNPq grant
PQ1 307029/2018-2 and by FAPERJ grant JCNE E-26/202.913/2019. This study was financed in part by the
Coordenação de Aperfeiçoamento de Pessoal de Nível Superior -- Brazil
(CAPES) -- Finance code 001. 

\subsection*{Conventions}
Throughout the paper, $I \subseteq \R$ is an open interval
containing $0$ and all vector bundles are real. Moreover, if $G_b$ denotes
the fiber of a surjective submersion $G \to B$ over $b \in B$, we
use $i_b : G_b \hookrightarrow G$ for the natural inclusion. This is
used extensively in Sections \ref{sec:deform-cont-group} and
\ref{sec:mult-vers-grays}.

\section{Contact groupoids}\label{sec:cont-and-Jac}
In this section we recall some notions and results regarding 
contact groupoids, most of which are
well-known except for  Proposition
\ref{prop:proper_co-orientable} and Corollary
\ref{cor:proper_co-orientable} in Section \ref{sec:co-orient-cont},
and Section \ref{subsec:co-or_finite_cover}. The main references for 
Lie groupoids are 
\cite{crainic_fernandes_book,mack}, for 
multiplicative distributions is \cite{spencer_operators}, and 
for contact groupoids are 
\cite{crainic_salazar,crainic_zhu,dazord,kerbrat,zamb_zhu}.

\subsection{Contact manifolds}\label{subsec:cont-str}

\begin{defn}\label{defn:contact_structure}
  A {\bf contact structure} on a manifold $N$ is a 
  hyperplane distribution $H \subset TN$ such that the vector 
  bundle morphism $c_H : H \times H \to TN/H$ given at the 
  level of sections by $c_H(X,Y)=[X,Y] \, \mathrm{mod} \, H$ 
  is non-degenerate. The pair $(N,H)$ is a 
  {\bf contact manifold}. 
\end{defn}

There is a dual approach to contact 
structures using 1-forms taking values in line bundles. Given a
contact manifold $(N,H)$, setting $\alpha_{\mathrm{can}} : TN \to L:=
TN/H$, we have that $H = \ker \alpha_{\mathrm{can}}$ and
$\alpha_{\mathrm{can}} \in \Omega^1(N;L)$. Following \cite[Note
$2.3$]{sal_sepe}, we refer to $\alpha_{\mathrm{can}}$ as the 
{\bf generalized contact form} of $(N,H)$. 

Let $(N,H)$ be {\bf co-orientable}, i.e., the 
  line bundle $L \to N$ is trivializable, and let 
$\psi : L \to N \times \mathbb{R}$ be a trivialization. If 
$\alpha_{\mathrm{can}}$ is the generalized contact form of 
$(N,H)$, then $\alpha:= \psi \circ \alpha_{\mathrm{can}} \in
\Omega^1(N)$ and $H = \ker \alpha$.  If $\psi'$ is another
trivialization of $L$, set $\alpha' := \psi' \circ
\alpha_{\mathrm{can}}$. If we identify $\psi' \circ \psi^{-1}$
with a nowhere vanishing $a \in C^{\infty}(N)$, then we have that $\alpha'
= a \alpha_{\psi}$. 

\begin{defn}\label{defn:contact_form}
  Let $(N,H)$ be a co-orientable contact manifold. Any 
  $\alpha \in \Omega^1(N)$ with $H = \ker \alpha$ is a 
  {\bf contact form} (for $(N,H)$). The pair $(N,\alpha)$ is 
  a {\bf co-oriented contact manifold}.
\end{defn}

\begin{Rmk}\label{rmk:contact_form}
  If $(N,\alpha)$ is a co-oriented contact manifold, then $\alpha : TN
  \to N \times \Rr$ is onto and, if $H = \ker \alpha$, then
  $(H,d\alpha) \to N$ is a symplectic vector bundle. Conversely, if
  $\alpha \in \Omega^1(N)$ has the above properties, then $(N,H = \ker
  \alpha)$ is a
  contact manifold. 
\end{Rmk}

A choice of contact form specifies
the following vector field. 

\begin{defn}\label{defn:reeb}
  Let $(N,\alpha)$ be a co-oriented contact manifold. The 
  {\bf Reeb vector field of $\boldsymbol{(N,\alpha)}$} is the unique 
  $R^{\alpha} \in \XX(N)$ such that 
  \begin{equation}\label{eq:Reeb_eqns}
  \alpha(R^{\alpha}) =1\quad \textnormal{ and }\quad d\alpha(R^{\alpha}, - ) = 0.
  \end{equation}
\end{defn}

A simple, but computationally useful property of co-oriented contact
manifolds is that they admit a natural complement to the contact
distribution. More precisely, if $(N,\alpha)$ is a co-oriented contact
manifold, setting $H = \ker \alpha$, then
\begin{equation}
  \label{eq:6}
  \begin{split}
    TN &\to \Rr\langle R^{\alpha} \rangle \oplus H \\
    v &\mapsto (\alpha(v)R^\alpha,v-\alpha(v)R^\alpha)
  \end{split}
\end{equation}
\noindent
and
\begin{equation}
  \label{eq:7}
  \begin{split}
    T^*N &\to \Rr\langle \alpha \rangle \oplus \mathrm{Ann}(\Rr\langle R^{\alpha} \rangle) \\
    \beta &\mapsto (\beta(R^\alpha)\alpha,\beta
    -\beta(R^\alpha)\alpha)
  \end{split}
\end{equation}
\noindent
are vector bundle isomorphisms.

Next we discuss diffeomorphisms that preserve
contact structures. 
\begin{defn}\label{defn:contactomorphisms}
  Let $(N_j,H_j)$ be a contact manifold for $j=1,2$. A {\bf
    contactomorphism between $\boldsymbol{(N_1,H_1)}$ and $\boldsymbol{(N_2,H_2)}$} 
  is a diffeomorphism $\Phi : N_1 \to N_2$ such that 
  $d\Phi(H_1) = H_2$. 
\end{defn}

\begin{Rmk}\label{rmk:line_bundle_iso}
  \mbox{}
  \begin{itemize}[leftmargin=*]
  \item Any contactomorphism $\Phi$ between $(N_1,H_1)$ and
    $(N_2,H_2)$ induces a vector bundle isomorphism
    $B: \Phi^*L_2 \stackrel{\cong}{\to} L_1$ covering the identity.
  \item Suppose that $(N_j,\alpha_j)$ is a 
    co-oriented contact manifold for $j=1,2$. Set $H_j = \ker \alpha_j$. Given 
    a contactomorphism $\Phi$ between $(N_1,H_1)$ and $(N_2,H_2)$, there exists a nowhere 
    vanishing $a \in C^{\infty}(N_1)$ such that $\Phi^*
    \alpha_2 = a \alpha_1$. Moreover, using $\alpha_j$ to identify $L_j$ with $N_j \times \Rr$ 
    and using the canonical isomorphism 
    $\Phi^*(N_2 \times \Rr) \cong N_1 \times \Rr$, the above
    isomorphism $B$ is 
    given by $ (x,\lambda) \mapsto (x,a^{-1}\lambda)$.
  \end{itemize}
\end{Rmk}

To conclude this section, we show that, up to taking double covers,
every connected contact manifold is co-orientable.

\begin{Rmk}\label{rmk:co-orientable_double_cover}
  Let $(N,H)$ be a connected contact manifold such that $L \to N$ is
  not trivializable. Since $L \to N$ is a line bundle, its structure
  group can be reduced to $\{ \pm 1\}$. Hence, there exists a
  double cover $q : \hat{N} \to N$  such that $\hat{L}:=q^*L$ is
  trivializable and $\hat{N}$ is connected. Since $q$ is a local
  diffeomorphism, $T\hat{N}$ is isomorphic to $q^*TN$. Setting $\hat{H}:= q^*H$, we have that $(\hat{N},\hat{H})$ is a
  contact manifold such that $T\hat{N}/\hat{H}$ is 
  isomorphic to $\hat{L}$ and, hence, trivializable.  We call $(\hat{N}, \hat{H})$ the 
  {\bf co-orientable double cover} of $(N,H)$.
\end{Rmk}

\subsection{Contact groupoids}\label{sec:contact-groupoids}

Throughout this paper, $G\rightrightarrows M$ and $G$ denote a 
{\bf Lie groupoid} over a manifold $M$, where $G$ and $M$ 
are the spaces of {\bf arrows} and {\bf objects} respectively. 
We refer to $M$ as the {\bf base} of $G$.
The structure maps of $\gpd$ are denoted as follows: 
$s,t: G\to M$ are the {\bf source} and {\bf target} maps 
respectively, $u : M \to G$ is the {\bf unit} map, 
$m : G^{(2)} \to G, \ m(g,h)=gh$ is the {\bf multiplication} 
map, where 
$G^{(2)}:=\{(g,h) \in G \times G \mid s(g) = t (h)\}$, and 
$i:G\to G,\ i(g)=g^{-1}$ is the {\bf inversion} map. Since 
$u : M \to G$ is an embedding, we often identify a point 
$x \in M$ with the unit $u(x) = 1_x$ and view $M$ as an embedded
submanifold of $G$. Given $x \in M$, the 
{\bf orbit of $\boldsymbol{x}$} is 
$S_x:= t(s^{-1}(x)) \subset M$. If $\Phi : G_1 \to G_2$ is a Lie
groupoid homomorphism, we say that it {\bf covers} the induced map 
$\phi:M_1 \to M_2$ on the bases. 

\begin{Rmk}[On Hausdorffness]\label{rmk:Hausdorff}
In general, the base and the source fibers of a 
Lie groupoid are assumed to be Hausdorff, while the space of 
arrows is not. In this paper, we assume that 
the space of arrows is also Hausdorff. This is 
primarily because we deal with proper maps and uniqueness of flows 
of vector fields.
\end{Rmk}

In this paper we are mostly interested in Lie groupoids 
that possess some degree of `compactness'.

\begin{defn}\label{defn:compactness}
  A Lie groupoid $\gpd$ is 
  \begin{itemize}[leftmargin=*]
      \item {\bf proper} if the map $(s,t) : G \to M \times M$ 
      is proper, i.e., the preimage of a compact set is compact, and
      \item {\bf compact} if $G$ is compact.
  \end{itemize}
\end{defn}

We are interested in Lie groupoids equipped with contact 
structures on the spaces of arrows that are, in some sense, 
`compatible' with the multiplication. To this end, we recall 
that, given a Lie groupoid $\gpd$, its 
{\em tangent Lie groupoid} $TG\rightrightarrows TM$ is the Lie 
groupoid with structure maps given by taking derivatives of 
those of $\gpd$.

\begin{defn}\label{defn:multiplicative_distn}
  A {\bf multiplicative distribution} on a Lie groupoid $\gpd$ 
  is a distribution  $H \subseteq TG$ that is a Lie 
  subgroupoid of $TG$ with base $TM$. 
\end{defn}

Multiplicative distributions satisfy several 
properties that are immediate consequences of 
Definition \ref{defn:multiplicative_distn}. If $H$ is a 
multiplicative distribution on $G$, then 
\begin{enumerate}[label=(\roman*),ref=(\roman*),leftmargin=*]
\item\label{item1} if $X_g\in H_g$ and $Y_h\in H_h$ are 
  composable, i.e., $ds(X_g)=dt(Y_h)$, then 
  $dm(X_g,Y_h)\in H_{gh}$, 
\item\label{item2} for all $x \in M$, $T_xM\subset H_x$, 
  and 
\item\label{item3} $H$ is both $s$-transversal and 
  $t$-transversal, i.e.,    
  $$TG=H+\ker ds \ \ \mathrm{and}\ \ TG=H+\ker dt.$$
\end{enumerate}

\begin{defn}\label{defn:contact_groupoid}
  A {\bf contact groupoid} is a pair $(G,H)$, where $G$ is a 
  Lie groupoid and $H$ is a multiplicative contact structure 
  on $G$. 
\end{defn}

\begin{Rmk}\label{rmk:ctct_gpd_literature}
Contact groupoids are also known as 
{\em conformal contact groupoids} (see
\cite[Appendix I, Definition $1.1$]{zamb_zhu}), or
{\em locally conformal contact groupoids} (see 
\cite[Example $7.6$]{crainic_zhu}). 
\end{Rmk}

In what follows we discuss the 1-form approach to contact groupoids. A {\bf representation} 
of a Lie groupoid $\gpd$ is a vector bundle $E \to M$ together with a linear 
action of $G$, i.e., a smooth assignment of a 
linear isomorphism $E_{s(g)}\to E_{t(g)},\ v\mapsto g\cdot v$ to each
arrow $g \in G$ that satisfies the usual axioms of 
an action.

\begin{defn}\label{defn:mult_form}
  Let $E$ be a representation of 
  a Lie groupoid $G$. A {\bf multiplicative form (with values in $\boldsymbol{E}$)} 
  is a form $\alpha\in\Omega^k(G;t^*E)$ such that 
  \begin{equation}\label{eq:mult_general}
  (m^*\alpha)_{(g,h)} = (\mathrm{pr}^*_1\alpha)_{(g,h)} + g\cdot(\mathrm{pr}_2^*\alpha)_{(g,h)}, 
  \end{equation}
  for all $(g,h)\in G^{(2)}$, where 
  $\mathrm{pr}_j :G^{(2)} \to G$ is the projection onto the 
  $j$th component for $j=1,2$.
\end{defn}

Given a multiplicative distribution $H$ on $G$, set 
$H^s:= H \cap \ker ds$. Property \ref{item3} implies that $TG/H$ 
is canonically isomorphic to $\ker ds/H^s$. Hence the vector 
bundle
$$E:=TG/H|_M \to M$$
is canonically isomorphic to 
$(\ker ds/H^s)|_M$. For any $(g,h) \in G^{(2)}$, right translation by $h$
\begin{equation}\label{eq:right}
\mathtt{r}_h:\ker d_gs\to \ker d_{gh}s ,\ \mathtt{r}_h(X)=dm(X,0_h) 
\end{equation}
\noindent
is an isomorphism that maps $H_g^s$ to $H_{gh}^s$. Hence, the maps 
\eqref{eq:right} induce a vector bundle isomorphism
\begin{equation}\label{eq:right_iso}
t^*E\overset{\mathtt{r}}{\cong} TG/H
\end{equation}
\noindent
covering the identity over $G$.

\begin{proposition}[Lemmas $3.6$ and $3.7$ in
  \cite{spencer_operators}]\label{prop:canonical_form} Let $H$ be a
  multiplicative distribution on $\gpd$. Then $E:=TG/H|_M$ inherits the structure of a representation of
  $G$. Moreover, the canonical projection $\alpha_{\mathrm{can}} : TG
  \to TG/H \cong t^*E$ is a multiplicative 1-form with values in $E$. Conversely, any multiplicative distribution is the kernel 
  of a pointwise surjective multiplicative 1-form. 
\end{proposition}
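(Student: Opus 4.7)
The plan is to construct the representation of $G$ on $E := (TG/H)|_M$ from the multiplicative structure of $H \subset TG$, then verify that $\alpha_{\mathrm{can}}$ is multiplicative with respect to it, and finally address the converse. For the action: the $s$-transversality of property \ref{item3} already gives the right-translation isomorphism $\overline{\mathtt{r}}: t^*E \cong TG/H$ of \eqref{eq:right_iso}. The same construction applied to the $t$-decomposition $TG = H + \ker dt$ yields an analogous left-translation isomorphism $\overline{\mathtt{l}}: s^*E \cong TG/H$: for $g: x \to y$, the map $v \mapsto dm(0_g, v)$ sends $\ker d_{1_x}t$ isomorphically onto $\ker d_g t$ and, since $0_g \in H_g$ and $H$ is a subgroupoid of $TG$ (property \ref{item1}), it descends to an isomorphism $\overline{\mathtt{l}}_g: E_x \cong (TG/H)_g$. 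I would define the action $g\cdot: E_x \to E_y$ as the composition $\overline{\mathtt{r}}_g^{-1} \circ \overline{\mathtt{l}}_g$. Smoothness is clear, while associativity and unitality follow from the corresponding properties of multiplication in $TG$, together with the fact that $H$ is a subgroupoid and the natural compatibilities between $\mathtt{r}$, $\mathtt{l}$, and $m$.

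To verify multiplicativity of $\alpha_{\mathrm{can}}$, fix a composable pair $(X_g, Y_h)$ and, using \ref{item3}, decompose $X_g = X_g^H + X_g^s$ with $X_g^H \in H_g$, $X_g^s \in \ker d_g s$, and $Y_h = Y_h^H + Y_h^t$ with $Y_h^H \in H_h$, $Y_h^t \in \ker d_h t$. Since $ds(X_g^s) = 0 = dt(Y_h^t)$, composability of $(X_g, Y_h)$ reduces to that of $(X_g^H, Y_h^H)$, so the decomposition $(X_g, Y_h) = (X_g^H, Y_h^H) + (X_g^s, Y_h^t)$ is a sum of composable pairs. Linearity of $dm$ together with \ref{item1} then gives, modulo $H_{gh}$,
\[
dm(X_g, Y_h) \equiv dm(X_g^s, Y_h^t) = dm(X_g^s, 0_h) + dm(0_g, Y_h^t),
\]
the last step again splitting $(X_g^s, Y_h^t)$ into composable summands. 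A short calculation using the associativity of $m$ shows that the first summand corresponds via $\overline{\mathtt{r}}_{gh}^{-1}$ to $\alpha_{\mathrm{can}}(X_g) \in E_{t(gh)}$, while the second corresponds to $g \cdot \alpha_{\mathrm{can}}(Y_h)$ by the very definition of the action.

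For the converse, let $\alpha \in \Omega^1(G; t^*E)$ be pointwise surjective and multiplicative, and set $H := \ker \alpha$, a distribution of constant corank $\rank E$. Property \ref{item2} follows by restricting \eqref{eq:mult_general} to the unit pair $(1_x, 1_x)$, which forces $\alpha|_{TM} = 0$. For \ref{item1}, if $X_g, Y_h \in H$ are composable in $TG$, then \eqref{eq:mult_general} gives $\alpha(dm(X_g, Y_h)) = \alpha(X_g) + g \cdot \alpha(Y_h) = 0$, so $dm(X_g, Y_h) \in H_{gh}$; closure under the inversion of $TG$ is obtained similarly. The main technical obstacle lies in the multiplicativity step: carefully tracking the two transversality decompositions, verifying that the $TG/H$-level identity translates into the desired $E$-level identity compatibly with both $\overline{\mathtt{r}}$ and $\overline{\mathtt{l}}$, and confirming that $g \cdot$ is well-defined on $E_x$ independently of the $\ker dt$-representative chosen.
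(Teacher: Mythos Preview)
The paper does not supply its own proof of this proposition; it simply cites Lemmas~3.6 and~3.7 of \cite{spencer_operators}. So there is nothing in the present paper to compare your argument against. That said, your sketch is the standard one and is essentially correct: defining the action as $g\cdot = \overline{\mathtt{r}}_g^{-1}\circ\overline{\mathtt{l}}_g$ and verifying multiplicativity of $\alpha_{\mathrm{can}}$ via the mixed $(\ker ds,\ker dt)$ decomposition is exactly how the proof in \cite{spencer_operators} proceeds.

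Two small remarks. First, note that the converse \emph{as literally stated} (``any multiplicative distribution is the kernel of a pointwise surjective multiplicative 1-form'') is an immediate corollary of the first part: just take $\alpha_{\mathrm{can}}$. What you actually prove---that the kernel of a pointwise surjective multiplicative 1-form is a multiplicative distribution---is the other half of the correspondence, and is indeed what the paper uses later (e.g.\ in Corollary~\ref{cor:strict_strict}); so your reading is the useful one. Second, for that direction you check closure under multiplication, units, and inversion, but you should also observe that $ds|_H$ and $dt|_H$ are surjective onto $TM$ (so that $H\rightrightarrows TM$ really is a Lie subgroupoid). This is not hard: given $Z\in T_gG$, pick $W\in\ker d_{1_{t(g)}}s$ with $\alpha(W)=\alpha(Z)$ (possible since $\alpha$ is surjective at units and vanishes on $TM$), and note that $\alpha(\mathtt{r}_g(W))=\alpha(Z)$ by multiplicativity, whence $Z-\mathtt{r}_g(W)\in H_g$; this gives $s$-transversality and hence fiberwise surjectivity of $ds|_H$.
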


By Proposition \ref{prop:canonical_form}, if $(G,H)$ is a contact groupoid, then the line bundle $L:= TG/H|_M$
inherits the structure of a representation of $G$ and the generalized contact form $\alpha_{\mathrm{can}} : TG \to TG/H \cong
t^*L$ is a multiplicative 1-form with values in $L$. Conversely, any
multiplicative contact structure is the kernel of a multiplicative
generalized contact form. \\

To conclude this section, we introduce the multiplicative analog of
Definition \ref{defn:contactomorphisms}.

\begin{definition}\label{defn:iso_ctct_gpd}
  An {\bf isomorphism of contact groupoids} between $(G_1,H_1)$ and $(G_2,H_2)$ is a Lie groupoid
  isomorphism $\Phi : G_1 \to G_2$ that is a contactomorphism. We use
  the notation $\Phi : (G_1,H_1) \to (G_2,H_2)$.
\end{definition}

\subsection{Co-orientable contact groupoids}\label{sec:co-orient-cont}

In what follows we fix a co-orientable contact groupoid 
$(G,H)$ over $M$ unless otherwise stated. Then $L:= TG/H|_M$ is
trivializable. In fact, a choice of trivialization 
$\psi: L \to M \times \Rr$ induces a trivialization 
of $t^*L \cong TG/H$ (see equation \eqref{eq:right_iso}). Consequently,
by Proposition \ref{prop:canonical_form},
\begin{itemize}[leftmargin=*]
\item $M \times \Rr$ inherits the structure of representation of $G$, and
\item the contact form $\alpha  = \psi \circ \alpha_{\mathrm{can}}$ is multiplicative with
  respect to the above representation.
\end{itemize}
The above representation of $G$ on $M \times \Rr$ is given by 
fiberwise multiplication by a Lie groupoid homomorphism $ F: G \to
\Rr^*$, i.e., a nowhere vanishing function 
$F$ such that $F(gh) = F(g)F(h)$ for all $(g,h) \in G^{(2)}$. Multiplicativity of $\alpha$ becomes
\begin{equation}\label{eq:2} 
m^*\alpha = \mathrm{pr}^*_1\alpha + \mathrm{pr}^*_1(F)\mathrm{pr}_2^*\alpha,
\end{equation}
\noindent
(cf. equation \eqref{eq:mult_general}).

\begin{defn}\label{defn:mult_contact_form}
  Let $(G,H)$ be a co-orientable contact groupoid. A pair 
  $(\alpha, F)$ as above is a {\bf multiplicative contact form} (for $(G,H)$). 
  We also say that $\alpha$ is {\bf $\boldsymbol{F}$-multiplicative} 
  and that $(G,\alpha,F)$ is a {\bf co-oriented contact groupoid}. 
\end{defn}

The following result, stated without proof, shows the degrees of
freedom in choosing multiplicative contact forms for a given
co-orientable contact groupoid (see \cite[Appendix I, Lemma $1.5$, Part (ii)]{zamb_zhu}\footnote{In this paper we use the `opposite' convention to the one in 
\cite[Definition $2.1$]{zamb_zhu}, cf. 
Definition~\ref{defn:mult_form}.}).

\begin{lemma}\label{lemma:choice_mult_contact_form}
  Let $(\alpha,F)$ be a multiplicative contact form for  a co-orientable
  contact groupoid $(G,H)$ over $M$. Then $(\alpha',F')$ is a
  multiplicative contact form for $(G,H)$ if and only if
  there exists a nowhere vanishing 
  $a\in C^{\infty}(M)$ such that
  \begin{equation*}
    \alpha'= (t^*a) \alpha \qquad \text{ and } \qquad F'= \frac{t^*a}{s^*a} F. 
  \end{equation*}
\end{lemma}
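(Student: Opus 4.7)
The plan is to prove both directions by direct computation, with the \emph{only if} direction being the substantive part.

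For the \emph{if} direction, assume $\alpha' = (t^*a)\alpha$ and $F' = \tfrac{t^*a}{s^*a} F$ for a nowhere vanishing $a \in C^\infty(M)$. Since $t^*a$ is nowhere vanishing, $\ker \alpha' = \ker \alpha = H$ and $\alpha'$ is pointwise surjective onto $M \times \mathbb{R}$, so $(G,\alpha')$ is a co-oriented contact manifold (Remark \ref{rmk:contact_form}). To verify \eqref{eq:2} for $(\alpha', F')$, I would use the two identities of maps $G^{(2)} \to M$ that come from the definition of $G^{(2)}$: namely $t \circ m = t \circ \mathrm{pr}_1$ and $s \circ \mathrm{pr}_1 = t \circ \mathrm{pr}_2$. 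Pulling $a$ back along these and substituting \eqref{eq:2} for $\alpha$ shows the required identity, with the factor $s^*a$ in $F'$ precisely compensating for the discrepancy between $\mathrm{pr}_1^*(t^*a)$ and $\mathrm{pr}_2^*(t^*a)$.

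For the \emph{only if} direction, suppose $(\alpha', F')$ is another multiplicative contact form. Since $\ker \alpha' = H = \ker \alpha$ and both forms are pointwise surjective onto $M \times \mathbb{R}$, there exists a nowhere vanishing $b \in C^\infty(G)$ with $\alpha' = b\alpha$. My goal is to show that $b$ is of the form $t^*a$. Writing out \eqref{eq:2} for $\alpha'$, substituting $\alpha' = b\alpha$, and using \eqref{eq:2} for $\alpha$ yields an identity of 1-forms on $G^{(2)}$:
\begin{equation*}
(m^*b - \mathrm{pr}_1^*b)\,\mathrm{pr}_1^*\alpha \;+\; \bigl(m^*b \cdot \mathrm{pr}_1^*F - \mathrm{pr}_1^*F' \cdot \mathrm{pr}_2^*b\bigr)\,\mathrm{pr}_2^*\alpha \;=\; 0.
\end{equation*}

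The key step is to decouple the two coefficients by testing against judiciously chosen tangent vectors of $G^{(2)}$. Using the $s$-transversality \ref{item3}, at any $g \in G$ one can find $X_g \in \ker d_g s$ with $\alpha_g(X_g) = 1$; for any composable pair $(g,h)$, the vector $(X_g, 0_h)$ lies in $T_{(g,h)}G^{(2)}$ and pairs to $1$ with $\mathrm{pr}_1^*\alpha$ and to $0$ with $\mathrm{pr}_2^*\alpha$. This forces $m^*b = \mathrm{pr}_1^*b$, i.e.\ $b(gh) = b(g)$; setting $g = 1_{t(h)}$ gives $b(h) = b(1_{t(h)})$, so $b = t^*a$ with $a(x) := b(1_x)$, which is nowhere vanishing. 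Analogously, $t$-transversality lets me choose $Y_h \in \ker d_h t$ with $\alpha_h(Y_h) = 1$, producing the tangent vector $(0_g, Y_h) \in T_{(g,h)}G^{(2)}$ and yielding the second scalar identity $b(gh) F(g) = F'(g) b(h)$; substituting $b = t^*a$ and using $t(gh) = t(g)$, $t(h) = s(g)$ delivers $F' = \tfrac{t^*a}{s^*a} F$.

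The main obstacle, and essentially the only place where the geometry of contact groupoids enters beyond bookkeeping, is the choice of test vectors in $T G^{(2)}$ used to separate the two coefficients: it requires exactly the transversality properties \ref{item3} of $H$, together with the observation that $\ker ds \not\subseteq H$ and $\ker dt \not\subseteq H$ (which follow from these transversalities combined with $\dim H = \dim G - 1$).
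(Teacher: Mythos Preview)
Your argument is correct. Note, however, that the paper does not actually supply a proof of this lemma: it is stated without proof and attributed to \cite[Appendix I, Lemma 1.5, Part (ii)]{zamb_zhu}. Your direct computation --- writing $\alpha' = b\alpha$, expanding the two multiplicativity equations, and separating the coefficients of $\mathrm{pr}_1^*\alpha$ and $\mathrm{pr}_2^*\alpha$ by testing against $(X_g,0_h)$ with $X_g \in \ker d_g s \setminus H_g$ and $(0_g,Y_h)$ with $Y_h \in \ker d_h t \setminus H_h$ --- is a clean and self-contained proof. The existence of such test vectors is indeed exactly property \ref{item3} together with $\mathrm{codim}\, H = 1$, so your identification of where the geometry enters is accurate.
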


Given a co-oriented contact groupoid $(G,\alpha,F)$, set 
\begin{equation}\label{eqn:1000}
r:=\ln(|F|)\ \ \mathrm{and}\ \ \sigma:=\mathrm{sgn}(F).
\end{equation}
Then $r(gh) = r(g) + r(h)$ for all $(g,h) \in G^{(2)}$,
i.e., $r$ is a $1$-cocycle in the 
{\em differentiable cohomology} of $G$ (see \cite{crainic} for 
a definition). Moreover, $\sigma:G\to \{\pm 1\}$ is a Lie groupoid 
homomorphism and $F = \sigma e^r$.

\begin{defn}\label{defn:reeb_cocycle}
  Let $(G,\alpha,F)$ be a co-oriented contact groupoid. We 
  call $r$ the 
  {\bf Reeb cocycle (of $\boldsymbol{(G,\alpha,F)})$} (see 
  \cite[Definition $1.3$]{crainic_zhu}), and $\sigma$ the 
  {\bf sign of $\boldsymbol{F}$}.
\end{defn}

Suppose that the Reeb cocycle $r$ of a co-oriented contact groupoid $(G,\alpha,F)$ is
a coboundary, i.e., there exists $\kappa\in C^\infty (M)$ such that
\begin{equation}\label{eq:reeb_trivial}
  r=s^*\kappa-t^*\kappa.
\end{equation} 
\noindent
Then, applying Lemma \ref{lemma:choice_mult_contact_form} with $a:=
e^{\kappa}$, we have that  $(e^{t^*\kappa}\alpha, \sigma)$ is a
multiplicative contact form for $(G,H = \ker \alpha)$. This proves the
following result.

\begin{proposition}\label{prop:proper_co-orientable}
  Let $(G,\alpha,F)$ be a co-oriented contact groupoid and let
  $\sigma=\mathrm{sgn}(F)$. If the Reeb cocycle $r$ of $(G,\alpha,F)$
  is a coboundary, then, for all $\kappa\in
  C^\infty (M)$ satisfying \eqref{eq:reeb_trivial}, $(e^{t^*\kappa}\alpha,\sigma)$ is a
  multiplicative contact form for $(G,H = \ker \alpha)$.  
\end{proposition}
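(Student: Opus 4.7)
The plan is to directly invoke Lemma \ref{lemma:choice_mult_contact_form} with the correct choice of scaling function. Since we already have $(\alpha, F)$ as a multiplicative contact form for $(G, H)$, the lemma tells us exactly which pairs $(\alpha', F')$ will also be multiplicative contact forms: they are parametrized by nowhere vanishing $a \in C^\infty(M)$ via $\alpha' = (t^*a)\alpha$ and $F' = \tfrac{t^*a}{s^*a}F$. So I only need to exhibit an appropriate $a$.

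Given the hypothesis that the Reeb cocycle satisfies $r = s^*\kappa - t^*\kappa$, the natural choice is $a := e^{\kappa}$, which is manifestly smooth and nowhere vanishing on $M$. First, I would check the 1-form part: $(t^*a)\alpha = (t^* e^{\kappa})\alpha = e^{t^*\kappa}\alpha$, which matches the desired expression. Next, I would verify the cocycle part by computing
\begin{equation*}
\frac{t^*a}{s^*a}\, F \;=\; \frac{e^{t^*\kappa}}{e^{s^*\kappa}}\, F \;=\; e^{\,t^*\kappa - s^*\kappa}\, F \;=\; e^{-r} F.
\end{equation*}
Using the factorization $F = \sigma e^{r}$ from \eqref{eqn:1000}, this simplifies to $e^{-r}\sigma e^{r} = \sigma$, since $\sigma$ commutes with the positive scalar $e^{\pm r}$ (they are both real-valued functions on $G$).

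Thus the pair $(e^{t^*\kappa}\alpha, \sigma)$ is precisely $\bigl((t^*a)\alpha, \tfrac{t^*a}{s^*a}F\bigr)$ for $a = e^{\kappa}$, and Lemma \ref{lemma:choice_mult_contact_form} immediately concludes that it is a multiplicative contact form for $(G, H = \ker \alpha)$. There is no real obstacle here: the statement is essentially a restatement of the cocycle assumption \eqref{eq:reeb_trivial} as a gauge transformation of $F$ via Lemma \ref{lemma:choice_mult_contact_form}, and the only thing to observe is that $\kappa$ is the correct exponent to produce the compensating factor $e^{-r}$ needed to kill the exponential part of $F$.
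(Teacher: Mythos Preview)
Your proof is correct and follows exactly the same approach as the paper: apply Lemma~\ref{lemma:choice_mult_contact_form} with $a = e^{\kappa}$ and check that $\tfrac{t^*a}{s^*a}F = e^{-r}\sigma e^{r} = \sigma$. The paper's argument is in fact terser than yours, simply stating ``applying Lemma~\ref{lemma:choice_mult_contact_form} with $a := e^{\kappa}$'' in the paragraph preceding the proposition.
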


Since the differentiable cohomology of proper Lie groupoids vanishes in
all positive degrees (see 
\cite[Proposition $1$]{crainic}), Proposition
\ref{prop:proper_co-orientable} immediately implies the
following result.

\begin{corollary}\label{cor:proper_co-orientable}
  Let $(G,H)$ be a proper co-orientable contact groupoid. Then there
  exist a Lie groupoid homomorphism $\sigma : G \to \{\pm 1\}$ and a
  contact form $\alpha$ for $(G,H)$ such that $(\alpha,\sigma)$ is a
  multiplicative contact form for $(G,H)$.
\end{corollary}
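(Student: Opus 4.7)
The plan is to show that the corollary reduces, via a one-line check, to the combination of Proposition \ref{prop:proper_co-orientable} and the vanishing theorem for differentiable cohomology of proper Lie groupoids from \cite{crainic}.

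First I would produce \emph{some} multiplicative contact form $(\alpha_0, F_0)$ for $(G,H)$ to serve as a starting point. Since $(G,H)$ is co-orientable, the line bundle $L = TG/H|_M \to M$ is trivializable, so I pick any trivialization $\psi : L \to M \times \R$. By the discussion preceding Definition \ref{defn:mult_contact_form}, this induces a representation of $G$ on $M \times \R$, encoded by a Lie groupoid homomorphism $F_0 : G \to \R^*$, and the contact form $\alpha_0 := \psi \circ \alpha_{\mathrm{can}}$ satisfies equation \eqref{eq:2} with respect to $F_0$. Thus $(\alpha_0, F_0)$ is a multiplicative contact form for $(G,H)$.

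Next I decompose $F_0$ as in \eqref{eqn:1000}: set $\sigma := \mathrm{sgn}(F_0) : G \to \{\pm 1\}$ and $r := \ln |F_0| \in C^\infty(G)$. Since $F_0$ is a Lie groupoid homomorphism and the sign and absolute value are multiplicative, $\sigma$ is a Lie groupoid homomorphism and $r$ is a differentiable $1$-cocycle, i.e. $r(gh) = r(g) + r(h)$ for $(g,h) \in G^{(2)}$. The key input is now that $G$ is proper, so by \cite[Proposition 1]{crainic} the differentiable cohomology of $G$ vanishes in all positive degrees; in particular $r$ is a coboundary, and I can choose $\kappa \in C^\infty(M)$ with $r = s^*\kappa - t^*\kappa$, i.e., condition \eqref{eq:reeb_trivial} holds for the Reeb cocycle of $(G,\alpha_0,F_0)$.

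I then apply Proposition \ref{prop:proper_co-orientable} directly: with this $\kappa$, the pair $(e^{t^*\kappa}\alpha_0, \sigma)$ is a multiplicative contact form for $(G, H = \ker \alpha_0)$. Setting $\alpha := e^{t^*\kappa}\alpha_0$ finishes the proof, since $e^{t^*\kappa}$ is a nowhere vanishing function on $G$, so $\ker \alpha = \ker \alpha_0 = H$. There is essentially no obstacle here: the only nontrivial ingredient is the vanishing of $H^1$ of the differentiable cohomology of a proper Lie groupoid, which is being quoted as a black box; everything else is bookkeeping with the factorization $F_0 = \sigma e^r$ and an application of Lemma \ref{lemma:choice_mult_contact_form} (as already packaged in Proposition \ref{prop:proper_co-orientable}).
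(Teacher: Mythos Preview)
Your proposal is correct and follows essentially the same approach as the paper: the paper simply notes that properness forces the differentiable cohomology to vanish in positive degrees by \cite[Proposition 1]{crainic}, so the Reeb cocycle is a coboundary and Proposition \ref{prop:proper_co-orientable} applies. You have just spelled out the implicit first step of fixing an initial multiplicative contact form $(\alpha_0,F_0)$ via a trivialization of $L$, which the paper leaves tacit.
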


\begin{Rmk}\label{rmk:corank_one_mult_dist}
  Let $H$ be a multiplicative distribution on $\gpd$ such
  that $TG/H$ has rank one and is trivializable.  Then 
  \begin{itemize}[leftmargin=*]
  \item upon fixing a trivialization of $TG/H|_M$, $H$ is encoded by a
    multiplicative 1-form $(\alpha, F)$,
  \item the space of such multiplicative 1-forms is given by Lemma
    \ref{lemma:choice_mult_contact_form},
  \item equation \eqref{eqn:1000} gives a 1-cocycle $r$ in the
    differentiable cohomology of $G$ and a Lie groupoid homomorphism
    $\sigma : G \to \{ \pm 1\}$, and
  \item the analogs of Proposition
    \ref{prop:proper_co-orientable} and Corollary
    \ref{cor:proper_co-orientable} hold.
  \end{itemize}
  In particular, we say that $(\alpha,F)$ is a {\bf multiplicative
    1-form} (for $(G,H)$).
\end{Rmk}

Let $(G,\alpha,F)$ be a co-oriented contact groupoid. In some works in the
literature the sign of $F$  is assumed to
be positive (see \cite{crainic_zhu}). While this holds if the groupoid or its $s$-fibers are connected, in 
general it need not be, as shown below (see Section
\ref{sec:projectivisation_dual_Lie_algebra} for the notation).

\begin{example}\label{exm:mult_function&cpct_not_constant}
  Throughout this example, $n \geq 3$ is an odd natural number. The map $O(n)
  \to \{ \pm 1\} \times SO(n)$, $A \mapsto (\det A, (\det A)^{-1}A)$ is an
  isomorphism of Lie groups. Hence, upon identifying
  $\mathfrak{o}(n)^*$ with $\mathfrak{so}(n)^*$, the 
  coadjoint action of $O(n)$ factors through that of 
  $SO(n)$. The negative of the dual of the Killing form on 
  $\mathfrak{o}(n)^*$ is a Riemannian metric 
  that is bi-invariant with respect to both $O(n)$ and 
  $SO(n)$. We use this metric to identify the oriented projectivization
  $\mathbb{S}(T^*O(n))$ of $T^*O(n)$ with the unit sphere bundle
  $U(T^*O(n))$ of $T^*O(n)$. Upon using 
  right trivializations and the above isomorphism, there is a diffeomorphism
  $$U(T^*O(n)) \cong U(\mathfrak{o}(n)^*) \times (\{ \pm 1\} \times SO(n)),$$
  \noindent
  where $U(\mathfrak{o}(n)^*) \subset \mathfrak{o}(n)^*$ 
  denotes the unit sphere. We denote by 
  $\alpha \in \Omega^1(U(\mathfrak{o}(n)^*) \times (\{ \pm 1\} \times SO(n)))$ 
  the pullback of the restriction of the Liouville $1$-form 
  to $U(T^*O(n))$ along the above diffeomorphism. Then 
  $\ker \alpha$ is a contact structure. In what follows we define 
  a structure of Lie groupoid on 
  $U(\mathfrak{o}(n)^*) \times (\{ \pm 1\} \times SO(n))$ 
  over $U(\mathfrak{o}(n)^*)$ with the property that 
  $\alpha$ is $F$-multiplicative for some function $F$ that takes both positive and 
  negative values.

  Consider the right $\{ \pm 1\}\times SO(n)$-action 
  on $U(\mathfrak{o}(n)^*)$ given by
  $$\xi \cdot (\pm 1,A):= \pm \mathrm{Ad}^*_A(\xi).$$
  \noindent
  (This is {\em not} the coadjoint action of $O(n)$!) Endow 
  $G: =U(\mathfrak{o}(n)^*) \times (\{ \pm 1\} \times SO(n))$ 
  with the structure of an action Lie groupoid and define a Lie groupoid homomorphism
  $F : G \to \Rr^*$ by $F(\xi;\pm 1,A)= \pm 1$. Then a direct 
  calculation (very similar to that in 
  \cite[Example $2.3$]{zamb_zhu}), shows that $\alpha$ is 
  $F$-multiplicative. Hence, by 
  Proposition \ref{prop:canonical_form}, $(G,\alpha, F)$ is a
  co-oriented contact groupoid. By construction, $G$ is 
  compact, the base is connected and $F$ takes both positive and
  negative values. 
\end{example}

To conclude this section, we discuss properties of the Reeb vector
field of a multiplicative contact form. Recall that if $G$ is a Lie groupoid and $X \in \XX(G)$, then the 
vector field $X^{\mathtt{L}} \in  \XX(G)$ given by
\begin{equation}
  \label{eq:8}
  X^{\mathtt{L}}_g := dm(0_g,X_{1_{s(g)}} - d
  t (X_{1_{s(g)}})) 
\end{equation}
\noindent
is {\em left-invariant}, i.e., $X^{\mathtt{L}} \in \Gamma(\ker dt)$,
and $\mathtt{l}_g(X^{\mathtt{L}}_h) =
X^{\mathtt{L}}_{gh}$ for all $(g,h) \in G^{(2)}$, where $\mathtt{l}_g$ is left translation by $g$
(cf. equation \eqref{eq:right}). 

\begin{lemma}\label{lemma:reeb_gpd}
  Let $(G,\alpha,F)$ be a co-oriented contact groupoid over $M$.
  \begin{itemize}[leftmargin=*]
  \item The Reeb vector field $R^{\alpha}$ of
    $(G,\alpha)$ is right-invariant, i.e., $R^{\alpha}\in \Gamma(\ker ds)$, and, for all $(g,h)\in G^{(2)}$, 
    \begin{equation}\label{eq:right_invariant_Reeb} 
      \mathtt{r}_h(R^{\alpha}_g)=R^{\alpha}_{gh}.
    \end{equation}
  \item The left-invariant vector field $R^{\alpha,\mathtt{L}}$
    induced by $R^{\alpha}$ is given by
    \begin{equation}
      \label{eq:16}
       R^{\alpha,\mathtt{L}} = FR^{\alpha} + \Lambda^{\alpha,\sharp}(dF), 
    \end{equation}
    \noindent
    where $\Lambda^{\alpha} \in \XX^2(G)$ is as in Example \ref{exm:cooriented_contact}. 
  \end{itemize}
\end{lemma}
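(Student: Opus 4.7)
The plan is to verify both parts by combining the multiplicativity equation
\[
m^*\alpha=\mathrm{pr}_1^*\alpha+\mathrm{pr}_1^*F\cdot\mathrm{pr}_2^*\alpha
\]
with its exterior derivative, using the fact that a vector field on the contact manifold $(G,\alpha)$ is uniquely determined by its $\alpha$-value and its $d\alpha$-contraction. As preparatory identities, pulling back the multiplicativity along $x\mapsto(1_x,1_x)$ forces $F(1_x)=F(1_x)^2$, hence $F(1_x)=1$, and $u^*\alpha=0$; this in turn gives $u^*d\alpha=0$ and $dF|_{du(TM)}=0$. Differentiating the multiplicativity of $\alpha$ and evaluating at $(1_x,1_x)$ on the composable pairs $(X,0)$ and $(0,Y)$ with $X\in\ker d_{1_x}s$ and $Y\in\ker d_{1_x}t$ yields the key pairing identity
\[
d\alpha_{1_x}(X,Y)=dF_{1_x}(X)\,\alpha_{1_x}(Y).
\]

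My first goal is to show $R^\alpha|_M\in\Gamma(\ker ds|_M)$. Using the splitting $T_{1_x}G=du(T_xM)\oplus\ker d_{1_x}s$, I write $R^\alpha_{1_x}=du(v)+Z$ with $v=ds(R^\alpha_{1_x})$ and $Z\in\ker d_{1_x}s$, and aim to show $v=0$. Decomposing test vectors via $T_{1_x}G=du(T_xM)\oplus\ker d_{1_x}t$ and combining the Reeb equation $\iota_{R^\alpha}d\alpha=0$ with $u^*d\alpha=0$ and the pairing identity produces $\iota_{du(v)}d\alpha=-dF_{1_x}(Z)\,\alpha_{1_x}$. Pairing this identity with $R^\alpha_{1_x}$ itself and using $\iota_{R^\alpha}d\alpha=0$ forces $dF_{1_x}(Z)=0$, hence $\iota_{du(v)}d\alpha=0$ at $1_x$. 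Since the radical of $d\alpha$ is $\Rr\langle R^\alpha\rangle$ and $\alpha(du(v))=0$, it follows that $du(v)=0$, whence $v=0$. As a by-product, applying the pairing identity to $X=R^\alpha_{1_x}$ and $Y\in\ker d_{1_x}t$ with $\alpha_{1_x}(Y)=1$ yields $dF_{1_x}(R^\alpha_{1_x})=0$.

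With $R^\alpha|_M$ a section of $\ker ds|_M$, I define the right-invariant extension $\tilde R_g:=dm(R^\alpha_{1_{t(g)}},0_g)\in\Gamma(\ker ds)$. The multiplicativity of $\alpha$ applied to the pair $(R^\alpha_{1_{t(g)}},0_g)$ gives $\alpha(\tilde R)=1$. For $\iota_{\tilde R}d\alpha=0$, I write an arbitrary $Z\in T_gG$ as $dm(X_1,Y_1)$ for a composable pair at $(1_{t(g)},g)$ and apply the differentiated multiplicativity to $(R^\alpha_{1_{t(g)}},0_g)$ and $(X_1,Y_1)$; all four resulting terms vanish thanks to $\iota_{R^\alpha}d\alpha=0$ and $dF_{1_x}(R^\alpha_{1_x})=0$. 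The uniqueness of the Reeb then gives $\tilde R=R^\alpha$, which is \eqref{eq:right_invariant_Reeb}.

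For the second part, a direct calculation using $\alpha\circ\Lambda^{\alpha,\sharp}=0$, $\iota_{R^\alpha}d\alpha=0$, the characterization $\iota_{\Lambda^{\alpha,\sharp}(\beta)}d\alpha=-\beta+\beta(R^\alpha)\alpha$ of $\Lambda^\alpha$, and $dF(R^\alpha)=0$ shows that $W:=FR^\alpha+\Lambda^{\alpha,\sharp}(dF)$ satisfies $\alpha(W)=F$ and $\iota_W d\alpha=-dF$. Starting from the definition $R^{\alpha,\mathtt{L}}_g=dm(0_g,R^\alpha_{1_{s(g)}}-du(dt(R^\alpha_{1_{s(g)}})))$, the multiplicativity of $\alpha$ together with $u^*\alpha=0$ gives $\alpha(R^{\alpha,\mathtt{L}})=F$, while the differentiated multiplicativity combined with the pairing identity, the multiplicativity of $dF$, and $dF|_{du(TM)}=0$ yields $\iota_{R^{\alpha,\mathtt{L}}}d\alpha=-dF$. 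Since vector fields on $(G,\alpha)$ are determined by these two data, $R^{\alpha,\mathtt{L}}=W$, which is \eqref{eq:16}. The main obstacle is the derivation of $R^\alpha|_M\in\Gamma(\ker ds|_M)$ directly from the Reeb equations; once this is in place, the remaining identities reduce to substitutions into the multiplicativity equations.
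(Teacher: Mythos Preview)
Your proof is correct, and for the second bullet it runs along the same lines as the paper's: both arguments compute $\alpha(R^{\alpha,\mathtt{L}})=F$ and $\iota_{R^{\alpha,\mathtt{L}}}d\alpha=-dF$ from the differentiated multiplicativity equation and then identify this with $FR^\alpha+\Lambda^{\alpha,\sharp}(dF)$. The paper phrases the intermediate step in terms of the Legendrian property of $M\subset(G,\ker\alpha)$, whereas you use the pairing identity $d\alpha_{1_x}(X,Y)=dF_{1_x}(X)\alpha_{1_x}(Y)$ directly; these are essentially equivalent inputs.

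The genuine difference is in the first bullet. The paper simply cites \cite[Corollary~5.2]{crainic_salazar} for right-invariance of $R^\alpha$, while you supply a self-contained argument: first forcing $R^\alpha|_M\in\Gamma(\ker ds|_M)$ via the pairing identity and the fact that the radical of $d\alpha$ is spanned by $R^\alpha$, and then checking that the right-invariant extension of $R^\alpha|_M$ satisfies the Reeb equations. Your route is more elementary in that it avoids appealing to external results, at the cost of a longer computation; it also isolates the identity $dF_{1_x}(R^\alpha_{1_x})=0$, which you reuse later.

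One small point worth making explicit: in the second part you invoke $dF(R^\alpha)=0$ as a global identity, but you only established it at units. The global statement follows at once from part one together with the multiplicativity of $F$: writing $R^\alpha_g=dm(R^\alpha_{1_{t(g)}},0_g)$ and differentiating $F(gh)=F(g)F(h)$ gives $dF_g(R^\alpha_g)=dF_{1_{t(g)}}(R^\alpha_{1_{t(g)}})F(g)=0$.
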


\begin{proof}
  The first statement follows immediately from \cite[Corollary
  $5.2$]{crainic_salazar}. To prove the second statement, by equation \eqref{eq:6},
  there exists $Y \in \Gamma(\ker \alpha)$ such that
  $$R^{\alpha,\mathtt{L}} = \alpha(R^{\alpha,\mathtt{L}})R^{\alpha} +
  Y.$$
  \noindent
  We recall that $M$ is a {\em Legendrian} submanifold of $(G,\ker \alpha)$, i.e.,
  $TM$ is contained in $\ker \alpha$ and is equal to its
  symplectic orthogonal with respect to $d\alpha|_{\ker \alpha}$ (see
  Remark \ref{rmk:contact_form} and \cite[Proposition
  5.1]{crainic_salazar}). This property, the definition of
  $R^{\alpha,\mathtt{L}}$, $F$-multiplicativity of $\alpha$ -- equation
  \eqref{eq:2} --, and the identity obtained from equation \eqref{eq:2}
  by taking exterior derivatives, yield that, for any $X \in
  TG$, 
  \begin{equation}\label{eq:final2}
  \alpha(R^{\alpha,\mathtt{L}})=F, \ \ d\alpha(R^{\alpha,\mathtt{L}}, X)=-dF(X).
  \end{equation}
  \noindent
  Since
  $d\alpha(R^{\alpha}, - ) = 0$, the definition of
  $\Lambda^{\alpha}$ (see Example \ref{exm:cooriented_contact}), and equation \eqref{eq:final2}, imply the
  desired result.
\end{proof}

\subsection{Co-orientable finite cover of a contact groupoid}\label{subsec:co-or_finite_cover}

In this section we establish a multiplicative analog of the construction 
in Remark \ref{rmk:co-orientable_double_cover} that we use in 
the proof of our main result (see Theorem \ref{thm:gray_dist}). To the
best of our knowledge, this has not appeared elsewhere in the
literature.

Let $(G,H)$ be a contact groupoid over a
connected manifold
$M$ such that $L \to M$ is not trivializable. As in  
Remark \ref{rmk:co-orientable_double_cover}, there exists a 
double cover $q: \hat{M} \to M$ such that 
$\hat{L}:=q^*L \to \hat{M}$ is trivializable and $\hat{M}$ is connected. Since $q$ is a 
local diffeomorphism, the hypotheses of 
\cite[Proposition $2.3.1$]{mack} are satisfied and we can 
consider the {\bf pullback Lie groupoid} 
$\hat{G}:= q^!G \rightrightarrows \hat{M}$, where 
$$ \hat{G} = \{(x,g,y) \in \hat{M} \times G \times \hat{M} \mid s(g) = q(y)\, , \, t(g) = q(x)\},$$
\noindent
and the structure maps are:
\begin{equation}
  \label{eq:24}
    \begin{split}
        \hat{s}(x,g,y) := y \, , \, \hat{t}(x,g,y) := x \, &, \,  \hat{u}(z):= (z, 1_{q(z)}, z), \\
        \hat{m}((x,g,y),(y,h,z)):= (x,gh,z) \, &, \, \hat{i}(x,g,y):= (y,g^{-1},x).
    \end{split}
\end{equation}
\noindent
We collect a few properties of $\hat{G}$ below.

\begin{enumerate}[label=(\arabic*),ref=(\arabic*),leftmargin=*]
\item \label{item:fibration}
  The restriction of the projection 
  $\hat{M} \times G \times   \hat{M} \to G$ to $\hat{G}$, denoted by
  $Q$, is a Lie groupoid homomorphism onto $G$ that covers $q$.
\item \label{item:z_2_squared} The free and proper 
  $\Zz_2$-action on $\hat{M}$ with quotient map 
  $q: \hat{M} \to M$ lifts to a free and proper 
  $\Zz_2 \times \Zz_2$-action on $\hat{G}$ with quotient map 
  $Q: \hat{G} \to G$. The lifted action is the restriction of the
  free and proper $\Zz_2 \times \Zz_2$-action on 
  $\hat{M} \times G \times \hat{M}$ defined as follows: the first 
  (respectively second) $\Zz_2$ acts on the first 
  (respectively second) copy of $\hat{M}$ and trivially on 
  $G$. 
\item \label{item:pullback_tangent} The tangent Lie groupoid 
  $T\hat{G} \rightrightarrows T\hat{M}$ equals the pullback 
  Lie groupoid $(dq)^!TG \rightrightarrows T\hat{M}$.
\end{enumerate}

\begin{Rmk}\label{rmk:action_not_gpd}
The $\Zz_2 \times \Zz_2$-action on $\hat{G}$ is {\em not} by 
Lie groupoid isomorphisms, as it does not preserve units. 
However, the diagonal copy of $\Zz_2$ in $\Zz_2 \times \Zz_2$ 
acts on $\hat{G}$ by Lie groupoid isomorphisms.
\end{Rmk}

Set $\hat{H}:= (dq)^!H \rightrightarrows T\hat{M}$. By 
property \ref{item:pullback_tangent}, $\hat{H}$ is a 
multiplicative distribution on $\hat{G}$. Moreover, by 
property \ref{item:z_2_squared}, $Q$ is a local diffeomorphism. 
Hence, $dQ$ induces an isomorphism $T\hat{G} \cong Q^*TG$ that 
restricts to an isomorphism $\hat{H} \cong Q^*H$ (all as 
vector bundles over $G$). Thus, $\hat{H}$ is a contact 
structure on $\hat{G}$. Therefore, $(\hat{G},\hat{H})$ is 
a contact groupoid. If $\hat{L}:= T\hat{G}/\hat{H}|_{\hat{M}}$, 
then $\hat{L}$ is canonically isomorphic to $q^*L$. Hence, $(\hat{G},\hat{H})$ is a 
co-orientable contact groupoid.

\begin{defn}\label{defn:co-orientable_cover_gpd}
  Let $(G,H)$ be a contact groupoid over
  a connected manifold $M$ such that $L \to M$ is not trivializable. The 
  co-orientable contact groupoid $(\hat{G},\hat{H})$ 
  constructed above is called the 
  {\bf co-orientable finite cover of $\boldsymbol{(G,H)}$}.
\end{defn}

\begin{Rmk}\label{lemma:compactness_preserved}
  Let $(G,H)$ be a
  contact groupoid over a connected manifold $M$ such that $L \to M$
  is not trivializable. Let
  $(\hat{G},\hat{H})$ be the co-orientable finite cover of
  $(G,H)$. Then $G$ is compact if and only if $\hat{G}$ is compact.
\end{Rmk}

\begin{Rmk}\label{rmk:co-orientable_cover_corank_one}
  Let $H$ be a corank one multiplicative distribution on $\gpd$ such
  that $TG/H|_M$ is not trivializable. The above construction holds in this more general setting with the obvious
  adaptations (cf. Remark \ref{rmk:corank_one_mult_dist}). In
  particular, it makes sense to consider the co-orientable finite
  cover of the Lie groupoid $G$ with the distribution $H$ that, by
  abuse of notation, we also denote by $(\hat{G},\hat{H})$.
\end{Rmk}

To conclude this section. we prove Lemma \ref{lemma:the_fs_are_constant}, which is a technical result relating the $\Zz_2 \times
\Zz_2$-action and some multiplicative contact forms on the space of
arrows of the co-orientable finite cover $(\hat{G},\hat{H})$ of
$(G,H)$. (We use this result in the proof of Theorem \ref{thm:main_not_triv}.) 
Let $\gamma\in\mathrm{Diff}(\hat{M})$ be the non-trivial 
diffeomorphism given by the $\Zz_2$-action on $\hat{M}$, and let 
$\varphi:\Zz_2\times\Zz_2\to\mathrm{Diff}(\hat{G})$ be the $\Zz_2
\times \Zz_2$-action on $\hat{G}$. For any
$(a,b) \in \Zz_2 \times \Zz_2$,  
$\varphi_{ab}:=\varphi(a,b)$ is the restriction of 
\begin{equation}
  \label{eq:23}
  (\gamma^a,\mathrm{id},\gamma^b) \in \mathrm{Diff}(\hat{M} \times G
\times \hat{M})
\end{equation}
\noindent
to $\hat{G}$. While $\varphi_{ab}$ is not a groupoid homomorphism if 
$a\neq b$ (see Remark \ref{rmk:action_not_gpd}), the multiplication
and the $\Zz_2 \times \Zz_2$-action on $\hat{G}$ are related as
follows. First, observe that the
map $(\varphi_{ab},\varphi_{bc})$ restricts to a diffeomorphism of
$\hat{G}^{(2)}$ that satisfies $ \varphi_{ac} \circ
\hat{m} = \hat{m} \circ (\varphi_{ab},\varphi_{bc})$ for all $a,b,c \in \Zz_2$.
Taking derivatives, we have that
\begin{equation}
  \label{rels_with_mult}
  d\varphi_{ac}(d\hat{m}(X,Y))=d\hat{m}(d\varphi_{ab}(X),d\varphi_{bc}(Y)),
\end{equation}
\noindent
for all 
$(X,Y)\in T\hat{G}^{(2)}$ and for all 
$a,b,c\in\Zz_2$.
By property \ref{item:z_2_squared} above, $\varphi_{ab}$ is a contactomorphism of $(\hat{G},\hat{H})$. If $\hat{\alpha}\in\Omega^1(\hat{G})$
is any contact form for $(\hat{G},\hat{H})$, for each $(a,b)
\in \Zz_2 \times \Zz_2$, there exists a nowhere vanishing $f_{ab}\in C^\infty(\hat{G})$ satisfying
\begin{equation}
  \label{The_fs}
  \varphi_{ab}^*\hat{\alpha}=f_{ab}\hat{\alpha}.
\end{equation} 

\begin{lemma}\label{lemma:the_fs_are_constant}
  Let $(G,H)$ be a contact groupoid over a connected manifold
  $M$ such that $L \to M$ is not
  trivializable and let $(\hat{G},\hat{H})$ be its co-orientable
  finite cover. Let $\hat{\sigma}:\hat G\to \{\pm 1\}$ be a Lie groupoid homomorphism
  and suppose that $(\hat{\alpha}, \hat{\sigma})$ is a multiplicative contact
  form for $(\hat{G},\hat{H})$. Then, for any $(a,b) \in \Zz_2
  \times \Zz_2$, the function
  $f_{ab}$ given by equation \eqref{The_fs} is
  constant. In fact, $f_{00} = f_{01} \equiv 1$ and $f_{10} =
  f_{11} \equiv -1$.
\end{lemma}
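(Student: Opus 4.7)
The plan is to derive the values of $f_{ab}$ from the $\hat\sigma$-multiplicativity of $\hat\alpha$ combined with equation \eqref{rels_with_mult}, and to pin down the sign of $f_{10}$ by tracking how $\hat\sigma$ transforms under the $\Zz_2\times\Zz_2$-action on $\hat G$.

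I would begin by pulling back the multiplicativity identity $\hat m^*\hat\alpha = \mathrm{pr}_1^*\hat\alpha + \mathrm{pr}_1^*(\hat\sigma)\,\mathrm{pr}_2^*\hat\alpha$ along $(\varphi_{ab},\varphi_{bc}): \hat G^{(2)} \to \hat G^{(2)}$. Using $\hat m \circ (\varphi_{ab},\varphi_{bc}) = \varphi_{ac} \circ \hat m$ on the left, and expanding the right via \eqref{The_fs}, one obtains an identity of $1$-forms on $\hat G^{(2)}$. Evaluating it at $(g,h)$ on $(R^{\hat\alpha}_g,0_h)$, which lies in $T_{(g,h)}\hat G^{(2)}$ since $R^{\hat\alpha}_g\in\ker d\hat s|_g$ by Lemma \ref{lemma:reeb_gpd}, and using $\hat\alpha(R^{\hat\alpha}_g)=1$, yields $f_{ac}(gh)=f_{ab}(g)$. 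Evaluating instead on $(0_g,R^{\hat\alpha,\mathtt L}_h)$, using $R^{\hat\alpha,\mathtt L}_h\in\ker d\hat t|_h$ and $\hat\alpha(R^{\hat\alpha,\mathtt L}_h)=\hat\sigma(h)$ from Lemma \ref{lemma:reeb_gpd}, yields $f_{ac}(gh)\hat\sigma(g)=\hat\sigma(\varphi_{ab}(g))\,f_{bc}(h)$. Setting $h=\hat u(\hat s(g))$ in the first relation forces $f_{ac}=f_{ab}$ for all $a,b,c$, so $f_{ab}$ depends only on its first index; since $\varphi_{00}=\mathrm{id}$ this already yields $f_{00}=f_{01}\equiv 1$ and $f_{11}=f_{10}$. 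Specializing the second relation to $(a,b,c)=(1,0,0)$ with $h=\hat u(\hat s(g))$ then reduces the lemma to showing $\hat\sigma\circ\varphi_{10}=-\hat\sigma$.

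This last identity is the crux of the argument and, I expect, the main obstacle. I would establish it by unwinding the definition of $\hat\sigma$ in terms of the trivialization $\psi$ of $\hat L = q^*L$ implicit in the choice of $\hat\alpha$: the representation of $\hat G$ on $\hat L$ is the pullback via $Q$ of the representation of $G$ on $L$, so for any $(x,g_0,y)\in\hat G$ and any nonzero $\ell\in L_{q(y)}$ one has $\hat\sigma(x,g_0,y) = \psi_x(g_0\cdot\ell)/\psi_y(\ell)$. Since $L\to M$ is not trivializable and $\hat M$ is connected, the canonical $\Zz_2$-lift of $\gamma$ to $\hat L$ is orientation-reversing, so $\psi$ can be rescaled so that $\psi_{\gamma(z)}=-\psi_z$ as functionals on the common fiber $L_{q(z)}$. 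Comparing the above formula at $(x,g_0,y)$ and at $\varphi_{10}(x,g_0,y)=(\gamma(x),g_0,y)$ then produces the required sign flip $\hat\sigma\circ\varphi_{10}=-\hat\sigma$; substituting back gives $f_{10}\equiv -1$, which together with the relations from the previous paragraph completes the proof.
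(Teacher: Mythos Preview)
Your derivation of the two relations $f_{ac}(gh)=f_{ab}(g)$ and $f_{ac}(gh)\,\hat\sigma(g)=\hat\sigma(\varphi_{ab}(g))\,f_{bc}(h)$ by pulling back the $\hat\sigma$-multiplicativity identity along $(\varphi_{ab},\varphi_{bc})$ is correct and is exactly the computation the paper performs by plugging in the right- and left-invariant Reeb vector fields. The reduction to $f_{10}(g)=\hat\sigma(\varphi_{10}(g))\,\hat\sigma(g)$ is also fine.

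The gap is in your final step. You propose to \emph{rescale} the trivialization $\psi$ so that $\psi_{\gamma(z)}=-\psi_z$, but $\psi$ is determined by the fixed $\hat\alpha$; rescaling replaces $\hat\alpha$ by $(\hat t^*a)\hat\alpha$ and one checks that $f'_{10}=\hat t^*\!\big(\tfrac{\gamma^*a}{a}\big)f_{10}$, so you would be proving the statement for a different contact form. The repair is immediate and renders rescaling unnecessary: set $\epsilon(x):=\psi_{\gamma(x)}/\psi_x$; your own formula for $\hat\sigma$ gives $\hat\sigma(\gamma(x),g_0,y)=\epsilon(x)\,\hat\sigma(x,g_0,y)$, and since both sides lie in $\{\pm 1\}$ so does $\epsilon$. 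Connectedness of $\hat M$ forces $\epsilon$ constant, and $\epsilon\equiv 1$ would make $\psi$ descend to a trivialization of $L$, contradicting the hypothesis; hence $\epsilon\equiv -1$ and $f_{10}\equiv -1$. The paper reaches the same conclusion by a slightly different route: it rewrites $f_{11}(x,g,y)=\hat\sigma(\gamma(x),1_{q(x)},x)$ using that $\hat\sigma$ is a groupoid homomorphism, notes this is a locally constant pullback from the connected $\hat M$, and then argues the contradiction at the level of forms (if $f_{10}\equiv 1$ the full $\Zz_2\times\Zz_2$-action preserves $\hat\alpha$, so $\hat\alpha$ descends to a contact form on $G$, forcing $L$ trivial).
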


\begin{proof}
  Set $\hat{R}: = R^{\hat{\alpha}}$ and let
  $(x,g,y) \in \hat{G}$. Using equations \eqref{eq:right_invariant_Reeb} 
  and \eqref{rels_with_mult}, we have that, for any $a,b,c \in
  \Zz_2$,  
  \begin{equation}
    \label{eq:4}
    \begin{split}
       f_{ac}(x,g,y) & =
      (\varphi^*_{ac}\hat{\alpha})_{(x,g,y)}(\hat{R}_{(x,g,y)}) =
      \hat{\alpha}_{\varphi_{ac}(x,g,y)}(d\varphi_{ac}(\hat{R}_{(x,g,y)})) \\
      & =\hat{\alpha}_{\varphi_{ac}(x,g,y)}
      (d\varphi_{ac}(d\hat{m}(\hat{R}_{(x,g,y)}, 0_{(y,1_{q(y)},y)}))) \\
      & =\hat{\alpha}_{\varphi_{ac}(x,g,y)}(d\hat{m}(d\varphi_{ab}(\hat{R}_{(x,g,y)}),d\varphi_{bc}(0_{(y,1_{q(y)},y)}))) \\
      & =\hat{\alpha}_{\varphi_{ab}(x,g,y)}(d\varphi_{ab}(\hat{R}_{(x,g,y)})) = (\varphi_{ab}^*\hat{\alpha})_{(x,g,y)}(\hat{R}_{(x,g,y)}) \\
      & = f_{ab}(x,g,y),
    \end{split}
  \end{equation}
  \noindent
  where in the fourth equality we use that $\hat{\alpha}$ is $\hat{\sigma}$-multiplicative. Hence, for any $a,b,c \in
  \Zz_2$, $f_{ab} \equiv f_{ac}$. Since $f_{00} \equiv 1$, we have
  that $f_{01} \equiv 1$. 
  
  Since $\hat{\alpha}$ is
  $\hat{\sigma}$-multiplicative and $d\hat{\sigma} \equiv 0$, 
  Lemma \ref{lemma:reeb_gpd} implies that 
  \begin{equation*}
    \hat{R}_{(x,g,y)} = d\hat{m}(0_{(x,g,y)},
     \hat{\sigma}(x,g,y)\hat{R}_{(y,1_{q(y)},y)}). 
  \end{equation*}
  \noindent
  Thus arguing as in \eqref{eq:4}, we have that
  $$ f_{ac}(x,g,y) =
  \hat{\sigma}(\gamma^a(x),g,\gamma^b(y))\hat{\sigma}(x,g,y)f_{bc}(y,1_{q(y)},y), $$
  \noindent
  for all
  $a,b,c \in \Zz_2$.  Taking $a=c = 1$ and $b = 0$ and using the
  fact that $f_{01} \equiv 1$, we have that $f_{11}(x,g,y) =
  \hat{\sigma}(\gamma(x),g,y)\hat{\sigma}(x,g,y)$. We observe that,
  since $\hat{\sigma} : \hat{G} \to \{ \pm 1\}$ is a groupoid homomorphism,
  $$\hat{\sigma}(\gamma(x),g,y)\hat{\sigma}(x,g,y) =
  \hat{\sigma}(\gamma(x),1_{q(x)},x) (\hat{\sigma}(x,g,y))^2 = \hat{\sigma}(\gamma(x),1_{q(x)},x).$$
  \noindent
  Hence, $f_{11}(x,g,y) =\hat{\sigma}(\gamma(x),1_{q(x)},x)$. This shows
  that $f_{11} = f_{10}$ is, at the same time, locally constant and
  the pullback of a function on $\hat{M}$ along $\hat{t}$. Hence, since $\hat{M}$ is connected, $f_{11} =
  f_{10}$ is constant; moreover, it has to be equal to $1$ or to
  $-1$. Suppose that $f_{11} = f_{10} \equiv 1$. Then the $\Zz_2
  \times \Zz_2$-action on $\hat{G}$ preserves the contact form
  $\hat{\alpha}$ and, therefore, it induces a contact form $\alpha$
  for $(G,H)$. This implies that $L= TG/H|_M \to M$ is trivializable,
  which is a contradiction.
\end{proof}

\section{Deformations of contact groupoids}\label{sec:deform-cont-group}
The aim of this section is to define deformations of (co-oriented) contact
groupoids and to study some of their properties, with
emphasis on deformations of compact contact groupoids. The idea is to
formalize the notion of `smooth 1-parameter family of (compact) contact
groupoids'. The
key result of this section is Corollary \ref{cor:existence_form},
which gives the existence of a `good' smooth 1-parameter family of
multiplicative contact forms for deformations of co-orientable compact
contact groupoids. We use this result in the proof of Theorem
\ref{thm:gray_dist}. The main references for deformations of Lie groupoids are \cite{DefLieGpds,Riemstacks}.

\subsection{Deformations of Lie groupoids}\label{sec:deform-lie-group}

\begin{defn}[Definition 1.9 in \cite{DefLieGpds}]\label{defn:family_Lie_gpd}
  A {\bf family of Lie groupoids (over
      $\boldsymbol{B}$)} is a Lie groupoid
    $\tilde{G} \rightrightarrows \tilde{M}$ together with a surjective
    submersion $p : \tilde{M} \to B$ such that
    $p \circ \tilde{s} = p \circ \tilde{t}$. We use the notation $\tilde{G} \rightrightarrows \tilde{M} \to B$. 
\end{defn}

\begin{remark}\label{rmk:fibres_family_gpd}
  Given $\tilde{G} \rightrightarrows \tilde{M} \to B$, for any
  $b \in B$, set $G_b:= (p\circ \tilde{s})^{-1}(b)$ and
  $M_b:= p^{-1}(b)$. There is a unique structure of Lie groupoid on
  $G_b \rightrightarrows M_b$ making it into a Lie subgroupoid of
  $\tilde{G} \rightrightarrows \tilde{M}$ that we call the {\bf
    fiber of $\boldsymbol{\tilde{G} \rightrightarrows \tilde{M} \to
      B}$ over $\boldsymbol{b}$}. 
\end{remark}



We formalize the notion of `a smooth 1-parameter family of Lie
groupoids' as follows.

\begin{defn}\label{defn:deformations_Lie_gpd}
  Let $\gpd$ be a Lie groupoid and let $I \subseteq \Rr$ be an open
  interval containing $0$. A {\bf deformation of $\boldsymbol{G}$}
  is a family of Lie groupoids $\tilde{G} \rightrightarrows \tilde{M}
  \to I$ such that
  \begin{itemize}[leftmargin=*]
  \item as manifolds, $\tilde{G} = G \times I$ and $\tilde{M} = M
    \times I$,
  \item the submersions $p \circ \tilde{s}: G \times I \to I$ and $p:
    M \times I \to I$ are projections onto the
    second component, and
  \item as Lie groupoids, $G_0 = G$.
  \end{itemize}
  We use the notation $\tilde{G} = \{G_\tau\}$, where $\tau \in I$. A
  deformation $\tilde{G}$ is {\bf proper} if 
  $\tilde{G}$ is proper. The {\bf constant} deformation of $G$ is the one in which
  $G_\tau = G$ as Lie groupoids for all $\tau$.
\end{defn}

\begin{rmk}\label{rmk:contrast_defn}
  \mbox{}
  \begin{itemize}[leftmargin=*]
  \item Definition \ref{defn:deformations_Lie_gpd} is
    a {\em strict}
    deformation in the sense of \cite[Definition 1.6]{DefLieGpds}. On the other
    hand, it is a special case of \cite[Definition
    5.1.1]{Riemstacks}, which allows for locally trivial submersions over
    any manifold.
  \item  The Lie groupoid structure on a deformation $\tilde{G}$ of
    $G$ is determined by the Lie groupoid structures on $G_\tau$ for all
    $\tau$.
  \end{itemize}
\end{rmk}

\begin{defn}\label{defn:equivalence_deformations}
  Let $G$ be a Lie groupoid.
  \begin{itemize}[leftmargin=*]
  \item Two deformations $\tilde{G}_1 = \{G_{1,\tau}\}$, $\tilde{G}_2
    = \{G_{2,\tau}\}$ of $G$ are {\bf isomorphic} if there exists a Lie groupoid
    isomorphism $\tilde{\Phi} : \tilde{G}_1 \to \tilde{G}_2$ of the 
    form 
    \begin{equation}
      \label{eq:5}
      \tilde{\Phi}(g,\tau) = (\Phi_{\tau}(g), \tau)
    \end{equation}
    \noindent
    such that
    $\Phi_0: G_{1,0} = G \to G_{2,0} = G$ is
    the identity. We use the notation $\tilde{\Phi} = \{\Phi_\tau\}$.
  \item A deformation $\tilde{G}$ of
    $G$ is {\bf trivial} if it is isomorphic to the constant deformation. 
  \end{itemize}
\end{defn}

\begin{Rmk}\label{rmk:iso_determined_time}
  By equation \eqref{eq:5}, an isomorphism $\tilde{\Phi}$ of deformations of $G$ is completely
  determined by the Lie groupoid isomorphisms $\Phi_\tau$ for all
  $\tau$ (cf. Remark \ref{rmk:contrast_defn}). This justifies the above notation.
\end{Rmk}

\begin{Rmk}\label{rmk:compare_marius}
  Definition \ref{defn:equivalence_deformations} should be compared
  with \cite[Definitions 1.6 and 1.9]{DefLieGpds}. In {\em loc. cit.} the
  authors allow for isomorphisms that are not the identity on the
  fiber over zero (see \cite[Definition 1.9]{DefLieGpds}), and
  consider a more general
  equivalence relation on deformations called {\em equivalence} (see
  \cite[Definition 1.6]{DefLieGpds}).
\end{Rmk}

The following stability result is proved in \cite{DefLieGpds} using
vanishing results for deformation cohomology
of Lie groupoids (see also \cite{Riemstacks} for similar stability
results proved using Riemannian metrics on Lie groupoids).

\begin{thm}[Theorem 1.7 in \cite{DefLieGpds}]\label{thm:stability_compact}
  Any deformation of a compact Lie groupoid is trivial.
\end{thm}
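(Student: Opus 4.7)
The plan is to run a Moser-type argument in the Lie groupoid setting. View the deformation $\tilde{G} = \{G_\tau\} \rightrightarrows \tilde{M} = M \times I$ as a single Lie groupoid fibered over $I$. The goal is to produce a smooth family $\{\Phi_\tau\}$ of Lie groupoid isomorphisms $G \to G_\tau$ with $\Phi_0 = \mathrm{id}$, which, assembled as in equation \eqref{eq:5}, yields the desired isomorphism between $\tilde{G}$ and the constant deformation. I would look for $\Phi_\tau$ as the time-$\tau$ flow of a time-dependent vector field, encoded by a single vector field $\tilde{X} \in \XX(\tilde{G})$ satisfying \emph{(i)} $\tilde{X}$ projects to $\partial_\tau$ on $I$, so that its flow slides between the fibers $G_\tau$, and \emph{(ii)} $\tilde{X}$ is \emph{multiplicative}, i.e., $\tilde{X}: \tilde{G} \to T\tilde{G}$ is a Lie groupoid homomorphism, so that its flow consists of Lie groupoid automorphisms of $\tilde{G}$.

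The bulk of the argument is the existence of such a multiplicative lift $\tilde{X}$. One starts with an arbitrary smooth lift $\tilde{X}_0 \in \XX(\tilde{G})$ of $\partial_\tau$, easily built by a partition of unity. The obstruction to correcting $\tilde{X}_0$ by a vertical (and hence $\partial_\tau$-preserving) vector field so that the result becomes multiplicative lies in the deformation cohomology of $\tilde{G}$ in the sense of \cite{DefLieGpds}. Since $G$ is compact, the total groupoid $\tilde{G}$ is proper: the preimage under $(\tilde{s},\tilde{t})$ of a compact subset of $\tilde{M} \times \tilde{M}$ has bounded $I$-projection and compact fiber over each value of $\tau$. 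By \cite[Proposition 1]{crainic}, the differentiable cohomology of a proper Lie groupoid vanishes in positive degrees, and this propagates to vanishing of deformation cohomology in positive degrees via the comparison results of \cite{DefLieGpds}. Hence the obstruction vanishes and the desired multiplicative lift $\tilde{X}$ exists.

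With $\tilde{X}$ in hand, compactness of the fibers $G_\tau$ ensures that the flow $\tilde{\Phi}_\tau$ of $\tilde{X}$ is defined on all of $\tilde{G}$ for every $\tau \in I$. Multiplicativity forces each $\tilde{\Phi}_\tau$ to be a Lie groupoid automorphism of $\tilde{G}$, and the fact that $\tilde{X}$ covers $\partial_\tau$ on $I$ forces $\tilde{\Phi}_\tau$ to map $G_0$ onto $G_\tau$ as Lie groupoids. Setting $\Phi_\tau := \tilde{\Phi}_\tau|_{G_0}$ yields the required smooth family of Lie groupoid isomorphisms with $\Phi_0 = \mathrm{id}$. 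The main obstacle is the cohomological step: setting up the deformation complex correctly and invoking vanishing for proper groupoids to kill the obstruction. Once the multiplicative lift is in hand, the integration is routine and mirrors the classical Moser trick underpinning Gray stability in the contact setting that the rest of this paper exploits.
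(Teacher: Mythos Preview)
The paper does not prove this theorem; it is imported from \cite{DefLieGpds}, with Remark~\ref{rmk:proof_stability} noting that the isomorphism there is obtained as the flow of a time-dependent multiplicative vector field, so that compactness of $G$ upgrades the cited ``trivial up to equivalence'' to ``trivial'' in the sense of Definition~\ref{defn:equivalence_deformations}. Your sketch is exactly this strategy: lift $\partial_\tau$ to a multiplicative vector field on $\tilde{G}$, with the obstruction living in the deformation cohomology of the (proper) groupoid $\tilde{G}$, and then integrate using compactness of the fibers; this is correct and matches the approach of \cite{DefLieGpds} that the paper invokes.

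One small imprecision: the vanishing you need is of the \emph{deformation} cohomology of $\tilde{G}$ in degree~$1$, and this is not literally \cite[Proposition~1]{crainic} (which is about differentiable cohomology with trivial coefficients). In \cite{DefLieGpds} the relevant vanishing for proper groupoids is established directly via averaging against a Haar system (their Theorem~6.1 and its consequences), rather than by a formal ``propagation'' from differentiable cohomology. This does not affect the validity of your outline, but the citation should point to \cite{DefLieGpds} for that step.
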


\begin{Rmk}\label{rmk:proof_stability}
  Strictly speaking, what is proved in \cite{DefLieGpds} is that
  deformations of compact Lie groupoids are trivial up to equivalence (see Remark
  \ref{rmk:compare_marius}). However, since the desired isomorphism is
  constructed using the flow of a time-dependent vector field and, in
  this case, the Lie groupoid is compact, Theorem
  \ref{thm:stability_compact} follows immediately from \cite[Remark 5.5
  and Theorem 7.1]{DefLieGpds}.
\end{Rmk}

\subsection{Deformations of contact groupoids}\label{sec:strict-deform-cont}
If $\tilde{G} \rightrightarrows \tilde{M} \to
B$ is a family of Lie groupoids over $B$, then $T\tilde{G}
\rightrightarrows T\tilde{M} \to TB$ is a family of Lie groupoids over
$TB$ with structure maps obtained by taking derivatives
of those of $\tilde{G} \rightrightarrows \tilde{M} \to
B$. Moreover, for any $b \in B$, the fiber of $T\tilde{G}
\rightrightarrows T\tilde{M} \to TB$ over  $0_b$ is the tangent Lie groupoid of
$G_b \rightrightarrows M_b$.

\begin{defn}\label{defn:family_mult_distns}
  Let $\tilde{G} \rightrightarrows \tilde{M} \to
  B$ be a family of Lie groupoids. A {\bf family of multiplicative
    distributions on $\boldsymbol{\tilde{G} \rightrightarrows \tilde{M} \to
      B}$} is a multiplicative distribution $\tilde{H} \subseteq
  T\tilde{G}$ on $\tilde{G}$.
\end{defn}

The following result motivates the terminology of Definition
\ref{defn:family_mult_distns}.

\begin{lemma}\label{lemma:family_mult_distns}
  Let $\tilde{H}$ be a family of multiplicative distributions on $\tilde{G} \rightrightarrows \tilde{M} \to
  B$. Then $\tilde{H} \rightrightarrows T\tilde{M} \to TB$ is a family of
  Lie groupoids over $TB$. Moreover, for any $b \in B$, if $H_b$ denotes the fiber of $\tilde{H}
  \rightrightarrows T\tilde{M} \to TB$ over $0_b$, then
  \begin{enumerate}[leftmargin=*]
  \item \label{item:3} $H_b\subseteq
    T G_b$ is a multiplicative distribution on $G_b$, and
  \item \label{item:4} the vector bundle $E_b:=TG_b/H_b|_{M_b}$ is canonically
    isomorphic to the restriction of $\tilde{E}:=
    T\tilde{G}/\tilde{H}|_{\tilde{M}}$ to $M_b$.
  \end{enumerate}
\end{lemma}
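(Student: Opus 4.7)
The plan is to first establish the family-of-Lie-groupoids structure on $\tilde{H}\rightrightarrows T\tilde{M}\to TB$, then to identify $H_b$ explicitly as $\tilde{H}\cap TG_b$ inside $T\tilde{G}|_{G_b}$, and finally to derive both \ref{item:3} and \ref{item:4} from a single dimension count that hinges on the $\tilde{s}$-transversality property \ref{item3} of the multiplicative distribution $\tilde{H}$.

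For the opening assertion, since $\tilde{H}\subseteq T\tilde{G}$ is multiplicative, $\tilde{H}\rightrightarrows T\tilde{M}$ is already a Lie subgroupoid of $T\tilde{G}\rightrightarrows T\tilde{M}$. Differentiating the defining equality $p\circ\tilde{s}=p\circ\tilde{t}$ yields $Tp\circ T\tilde{s}=Tp\circ T\tilde{t}$, and $Tp:T\tilde{M}\to TB$ is a surjective submersion because $p$ is; this exhibits $\tilde{H}\rightrightarrows T\tilde{M}\to TB$ as a family of Lie groupoids in the sense of Definition \ref{defn:family_Lie_gpd}. Next, by Remark \ref{rmk:fibres_family_gpd}, $H_b$ is the preimage of $0_b\in TB$ under $Tp\circ T\tilde{s}|_{\tilde{H}}$; since $Tp(T\tilde{s}(X))=0_b$ says precisely that the base point of $X$ lies in $G_b$ and that $d(p\circ\tilde{s})(X)=0$, i.e.\ that $X$ is tangent to $G_b$, I conclude $H_b=\tilde{H}\cap TG_b$.

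The crucial observation, from which both \ref{item:3} and \ref{item:4} follow, is that for every $g\in G_b$ the restriction $d(p\circ\tilde{s})|_{\tilde{H}_g}:\tilde{H}_g\to T_bB$ is surjective. Indeed, by \ref{item3} any $v\in T_g\tilde{G}$ decomposes as $h+k$ with $h\in\tilde{H}_g$ and $k\in\ker d_g\tilde{s}$, and since $dp(d\tilde{s}(k))=0$, the images of $d(p\circ\tilde{s})|_{\tilde{H}_g}$ and $d(p\circ\tilde{s})_g$ agree; the latter is all of $T_bB$ because $p\circ\tilde{s}$ is a composition of surjective submersions. A rank count then yields
\begin{equation*}
  \dim(\tilde{H}_g\cap T_gG_b)=\operatorname{rk}\tilde{H}-\dim B,\qquad \dim(\tilde{H}_g+T_gG_b)=\dim T_g\tilde{G},
\end{equation*}
for all $g\in G_b$. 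The first equality shows $H_b$ has constant rank over $G_b$ and hence is a vector subbundle of $TG_b$; combined with the fact that $H_b$ is the intersection of two Lie subgroupoids of $T\tilde{G}$ (with bases $T\tilde{M}$ and $TM_b$ respectively), this yields \ref{item:3}. The second equality shows that the natural map $TG_b/H_b\to T\tilde{G}/\tilde{H}|_{G_b}$ induced by the inclusion $TG_b\hookrightarrow T\tilde{G}|_{G_b}$ is a bundle isomorphism, and restricting to $M_b$ gives \ref{item:4}.

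The main obstacle I anticipate is the single step of verifying pointwise surjectivity of $d(p\circ\tilde{s})|_{\tilde{H}_g}$: this is precisely where property \ref{item3} is indispensable, and without it one cannot guarantee that $\tilde{H}$ meets each $TG_b$ with the expected codimension. Everything before and after this transversality input is either formal verification of groupoid axioms or routine linear algebra.
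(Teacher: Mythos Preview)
Your proof is correct and follows essentially the same approach as the paper: both arguments hinge on the surjectivity of $d(p\circ\tilde{s})|_{\tilde{H}_g}\colon \tilde{H}_g\to T_bB$ (which you justify explicitly via property \ref{item3}, while the paper records it as exactness of the top row in a commutative diagram of short exact sequences), and then extract \ref{item:4} from the resulting linear algebra. Your dimension count and the paper's diagram chase are two packagings of the same content; you are somewhat more explicit than the paper about why $H_b$ has constant rank, which is needed for \ref{item:3}.
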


\begin{proof}
  Since $\tilde{H}$ is a Lie subgroupoid of $T\tilde{G}$ over
  $T\tilde{M}$ and since $T\tilde{G}
  \rightrightarrows T\tilde{M} \to TB$ is a family of Lie groupoids,
  it follows that $\tilde{H} \rightrightarrows T\tilde{M} \to TB$ also
  is, and that \eqref{item:3} holds for any $b \in B$. It remains to
  prove \eqref{item:4}. For any $b \in B$,
  the surjective submersion $p \circ \tilde{s} : \tilde{G} \to B$
  induces the following diagram
  $$ \xymatrix{ 0 \ar[r] & H_b \ar@{^{(}->}[r] \ar@{^{(}->}[d] &
    \tilde{H}|_{G_b}  \ar@{>>}[r] \ar@{^{(}->}[d] & G_b \times T_b B
    \ar@{=}[d] \ar[r] & 0 \\
    0 \ar[r] & TG_b \ar@{^{(}->}[r] &
    T\tilde{G}|_{G_b}  \ar@{>>}[r]^-{d(p \circ \tilde{s} \circ i_b)} & G_b \times T_b B
    \ar[r] & 0.}$$
  \noindent
  Hence, $TG_b/H_b$ is canonically isomorphic to
  $T\tilde{G}/\tilde{H}|_{G_b}$. The result follows by using the fact that
  $G_b$ is a Lie subgroupoid of $\tilde{G}$.
\end{proof}

The following
result, stated using the notation of Lemma
\ref{lemma:family_mult_distns}, is an immediate consequence of Lemma \ref{lemma:family_mult_distns}
and of Proposition \ref{prop:canonical_form}.

\begin{cor}\label{cor:family_forms}
  Let $\tilde{H}$ be a family of multiplicative distributions on $\tilde{G} \rightrightarrows \tilde{M} \to
  B$. For any $b \in B$, upon identifying $E_b$ and $
  \tilde{E}|_{M_b}$ using the canonical isomorphism of Lemma \ref{lemma:family_mult_distns},
  \begin{itemize}[leftmargin=*]
  \item the representation of $G_b$ on $E_b$ equals the restriction of
    the representation of $\tilde{G}$ on $\tilde{E}$ to $G_b$, and
  \item the multiplicative 1-forms $i_b^*\tilde{\alpha}_{\mathrm{can}}$ and $
    \alpha_{b,\mathrm{can}}$ with values in
    the representation $E_b$ are equal.
  \end{itemize}
\end{cor}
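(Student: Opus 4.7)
The plan is to treat this as a straightforward naturality statement: the constructions of Proposition \ref{prop:canonical_form} behave well with respect to the Lie subgroupoid inclusion $i_b : G_b \hookrightarrow \tilde{G}$, and the identification $E_b \cong \tilde{E}|_{M_b}$ supplied by Lemma \ref{lemma:family_mult_distns} is precisely the bookkeeping needed to phrase this compatibility. Both bullets should fall out of the functoriality of the quotient construction applied to this inclusion.

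For the first bullet, I would unpack the representation structure from Proposition \ref{prop:canonical_form}: both the action of $\tilde{G}$ on $\tilde{E}$ and of $G_b$ on $E_b$ arise from the right-translation isomorphism $t^*E \cong TG/H$ of equation \eqref{eq:right_iso} applied to $\tilde{G}$ and $G_b$ respectively. Since $G_b \rightrightarrows M_b$ is a Lie subgroupoid of $\tilde{G}\rightrightarrows\tilde{M}$, the multiplication of $G_b$ is the restriction of that of $\tilde{G}$; hence right translation by any $h \in G_b$ in $\tilde{G}$ preserves $TG_b \subseteq T\tilde{G}|_{G_b}$ and restricts to right translation by $h$ in $G_b$. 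Combined with the canonical isomorphism of Lemma \ref{lemma:family_mult_distns}, this forces the two representations to agree.

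For the second bullet, I would observe that both $\tilde{\alpha}_{\mathrm{can}}$ and $\alpha_{b,\mathrm{can}}$ are, by definition, canonical projections onto their respective quotients. The diagram in the proof of Lemma \ref{lemma:family_mult_distns} already exhibits that the inclusion $TG_b \hookrightarrow T\tilde{G}|_{G_b}$ sends $H_b$ into $\tilde{H}|_{G_b}$ and descends to the canonical isomorphism $TG_b/H_b \cong T\tilde{G}/\tilde{H}|_{G_b}$. Restricting this square to units and pulling back $\tilde{\alpha}_{\mathrm{can}}$ along $i_b$ then yields $\alpha_{b,\mathrm{can}}$ under the identification of $E_b$ with $\tilde{E}|_{M_b}$.

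I do not foresee any genuine obstacle: each assertion is a formal consequence of the functoriality of the canonical quotient construction with respect to the Lie subgroupoid inclusion $G_b \hookrightarrow \tilde{G}$, and the content of Lemma \ref{lemma:family_mult_distns} is designed precisely to make this functoriality manifest. The only point where a little care is needed is to confirm that the right-translation isomorphism \eqref{eq:right_iso} is natural in this sense, but this is immediate since the inclusion of subgroupoids intertwines the multiplications on the nose.
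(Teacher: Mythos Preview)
Your proposal is correct and aligns with the paper's own treatment: the paper gives no proof beyond declaring the corollary ``an immediate consequence of Lemma \ref{lemma:family_mult_distns} and of Proposition \ref{prop:canonical_form}'', and you have simply unpacked that implication by tracing the naturality of the right-translation construction and of the canonical projection under the subgroupoid inclusion $i_b$. There is nothing to add.
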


\begin{definition}\label{defn:def_contact_gpd}
  A {\bf deformation of a contact groupoid $\boldsymbol{(G,H)}$} is a
  deformation $\tilde{G} =\{G_\tau\}$ of $G$ together with a family
  of multiplicative distributions $\tilde{H}$ on $\tilde{G}$, such
  that
  \begin{itemize}[leftmargin=*]
  \item for all $\tau$, $(G_\tau,H_{\tau})$
    is a contact groupoid,
  \item for all $(g,\tau) \in G \times I$,
    \begin{equation}
      \label{eq:1}
      \tilde{H}_{(g,\tau)} = (H_{\tau})_g \oplus
      T_\tau I \subseteq T_g G \oplus T_\tau I, \text{ and} 
    \end{equation}
  \item as contact groupoids, $(G_0,H_0) = (G,H)$.
  \end{itemize}
  We use the notation
  $(\tilde{G},\tilde{H}) = \{(G_\tau, H_{\tau})\}$; if $\{G_\tau\}$ is the constant deformation, we write
  $(G \times I, \tilde{H}) = \{(G,H_\tau)\}$. The {\bf constant} deformation of $(G,H)$ is the one in which
  $(G_\tau,H_{\tau}) = (G,H)$ as contact groupoids for all $\tau$.
\end{definition}

\begin{rmk}\label{rmk:strict_mult_distn}
  The second condition in Definition
  \ref{defn:def_contact_gpd} can be reformulated as
  follows: $\{0_g\} \oplus T_{\tau}I$ is contained in
  $\tilde{H}_{(g,\tau)}$ for all $(g,\tau) \in G \times I$. Hence,
  $\tilde{H}$ is completely determined by $H_\tau$ for all $\tau$,
  thus justifying the notation (cf. Remark
  \ref{rmk:contrast_defn}). Many of the results below hold without imposing equation
  \eqref{eq:1}. However, this more general setting goes beyond the
  scope of this paper. 
\end{rmk}

Let $(\tilde{G},\tilde{H})$ be a deformation of
a contact groupoid $(G,H)$. Since $\tilde{M} = M
\times I$, setting $\tilde{L} := T\tilde{G}/\tilde{H}|_{\tilde{M}}$ and $L :=
TG/H|_M$, there exists an isomorphism of vector bundles
\begin{equation}\label{eq:9}
  \psi : \tilde{L} \to \mathrm{pr}^*L,
\end{equation}
\noindent
where $\mathrm{pr} : M \times I \to M$ is projection onto the first
component (see, e.g., \cite[Chapter 4, Section 1, Theorem 1.5]{hirsch}). 
Combining this observation with Lemma \ref{lemma:family_mult_distns}, 
we have proved the following result.

\begin{lemma}\label{lemma:deformation_underlying_strict}
  If $(\tilde{G},\tilde{H}) = \{(G_\tau,H_\tau)\}$ is a deformation of
  a contact groupoid $(G,H)$, then
  $L_{\tau} = TG_{\tau}/H_{\tau}|_{M_{\tau}}$ is isomorphic to $L =
  TG/H|_M$ for
  all $\tau$.
\end{lemma}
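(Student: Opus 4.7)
The plan is to combine the two ingredients the paragraph before the lemma has already assembled, namely the canonical fiberwise identification from Lemma \ref{lemma:family_mult_distns} and the triviality of vector bundles over the contractible interval $I$.

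First I would apply Lemma \ref{lemma:family_mult_distns}\eqref{item:4} with $B = I$: for every $\tau \in I$, the line bundle $L_\tau = TG_\tau/H_\tau|_{M_\tau}$ is canonically isomorphic to the restriction $\tilde{L}|_{M_\tau}$, where $\tilde{L} = T\tilde{G}/\tilde{H}|_{\tilde M}$. Thus it suffices to show that $\tilde{L}|_{M_\tau}$ is isomorphic to $\tilde{L}|_{M_0} = L$ for every $\tau$.

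Next I would invoke the vector bundle isomorphism $\psi : \tilde{L} \to \mathrm{pr}^* L$ over $\tilde{M} = M \times I$ that was already produced in equation \eqref{eq:9} via the Hirsch reference. Restricting $\psi$ to $M_\tau = M \times \{\tau\}$ and using the canonical identification of $M_\tau$ with $M$ via $\mathrm{pr}$, one obtains
\begin{equation*}
  \tilde{L}|_{M_\tau} \;\xrightarrow{\;\psi|_{M_\tau}\;}\; (\mathrm{pr}^*L)|_{M_\tau} \;\cong\; L.
\end{equation*}
Composing with the canonical isomorphism of the first step produces the required isomorphism $L_\tau \cong L$ for every $\tau$, and at $\tau = 0$ this is compatible with the identification $(G_0,H_0) = (G,H)$ built into Definition \ref{defn:def_contact_gpd}.

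There is essentially no obstacle: the only non-trivial ingredient is the existence of $\psi$, which is the standard fact that a vector bundle over $M \times I$ is pulled back from $M$, already cited to Hirsch. The rest is assembly of canonical isomorphisms.
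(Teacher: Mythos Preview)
Your proposal is correct and follows exactly the paper's approach: the paper simply states that the lemma follows by combining the isomorphism $\psi : \tilde{L} \to \mathrm{pr}^*L$ of equation \eqref{eq:9} with Lemma \ref{lemma:family_mult_distns}, which is precisely the two-step assembly you describe.
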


\begin{definition}\label{defn:iso_contact}
  Let $(G,H)$ be a contact groupoid.
  \begin{itemize}[leftmargin=*]
  \item Two deformations $(\tilde{G},\tilde{H})$ and
    $(\tilde{G}',\tilde{H}') $ of $(G,H)$ are {\bf isomorphic} if there
    exists an isomorphism $\tilde{\Phi}: \tilde{G} \to \tilde{G}'$ of
    deformations of $G$ such that $d\tilde{\Phi}(\tilde{H}) = \tilde{H}'$.
  \item A deformation $(\tilde{G},\tilde{H})$ of $(G,H)$ is {\bf
      trivial} if it is isomorphic to the constant one.
  \end{itemize}
\end{definition}

\begin{Rmk}\label{rmk:iso_deformations}
  An isomorphism $\tilde{\Phi}$ between two deformations $(\tilde{G},\tilde{H})$ and
    $(\tilde{G}',\tilde{H}') $ of a contact groupoid $(G,H)$ is
    completely determined by the isomorphisms of contact groupoids
    $\Phi_\tau : (G_\tau,H_\tau) \to (G'_\tau,H'_\tau)$ for all $\tau$
    (cf. Remark \ref{rmk:iso_determined_time}).
\end{Rmk}

The following (partial) stability results for deformations of
compact contact groupoids is a consequence of Theorem \ref{thm:stability_compact}.

\begin{cor}\label{cor:strict_deformations_mult_structures}
  Any deformation $(\tilde{G},\tilde{H}) =
  \{(G_\tau,H_\tau)\}$ of a compact contact groupoid
  $(G,H)$ is isomorphic to one of the form
  $(G \times I, \tilde{H}') =\{(G,H'_{\tau})\}$.
\end{cor}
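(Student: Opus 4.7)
The plan is to reduce to the case of a constant underlying Lie groupoid by invoking the stability theorem for compact Lie groupoids, and then to transport the multiplicative contact distribution along the resulting isomorphism. The substantive input is Theorem \ref{thm:stability_compact}; the remainder consists of routine checks that Lie groupoid isomorphisms interact nicely with multiplicative contact distributions.

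First, since $G$ is compact, Theorem \ref{thm:stability_compact} produces an isomorphism of deformations of the Lie groupoid $G$, namely $\tilde{\Phi} = \{\Phi_\tau\}\colon \tilde{G} \to G \times I$ with $\Phi_0 = \mathrm{id}$ and each $\Phi_\tau\colon G_\tau \to G$ a Lie groupoid isomorphism (unwinding Definition \ref{defn:equivalence_deformations}). For each $\tau$, I set $H'_\tau := d\Phi_\tau(H_\tau)$; since $\Phi_\tau$ is a Lie groupoid isomorphism carrying a multiplicative contact distribution forward, $H'_\tau$ is a multiplicative contact distribution on $G$. Smoothness of $\tilde{\Phi}$ together with smoothness of $\tilde{H}$ guarantees that $\{H'_\tau\}$ varies smoothly in $\tau$, so one can form the distribution $\tilde{H}'$ on the constant family $G \times I \to I$ by $\tilde{H}'_{(g,\tau)} := (H'_\tau)_g \oplus T_\tau I$; multiplicativity of $\tilde{H}'$ over $G \times I \to I$ is inherited from the multiplicativity of each $H'_\tau$ on $G$. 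Since $\Phi_0 = \mathrm{id}$ we have $H'_0 = H$, so $(G \times I, \tilde{H}') = \{(G, H'_\tau)\}$ is a deformation of $(G, H)$ in the sense of Definition \ref{defn:def_contact_gpd}.

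Finally, I claim that $\tilde{\Phi}$ realizes an isomorphism between $(\tilde{G}, \tilde{H})$ and $(G \times I, \tilde{H}')$ as deformations of $(G, H)$, in the sense of Definition \ref{defn:iso_contact}: each $\Phi_\tau\colon (G_\tau, H_\tau) \to (G, H'_\tau)$ is an isomorphism of contact groupoids by construction, so by Remark \ref{rmk:iso_deformations} these slice-wise contactomorphisms assemble into the desired isomorphism of deformations. The main step to double-check is this last compatibility, namely that the slice-wise data genuinely package into an honest isomorphism of deformations of contact groupoids; this is where condition \eqref{eq:1} is doing its work, ensuring that the splitting $T(G \times I) = TG \oplus TI$ is respected by the family $\tilde{H}'$. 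With this in hand, the only non-formal ingredient in the entire argument is Theorem \ref{thm:stability_compact}.
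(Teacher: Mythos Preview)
Your proof is correct and follows essentially the same approach as the paper: invoke Theorem \ref{thm:stability_compact} to trivialize the underlying Lie groupoid deformation, then transport the multiplicative contact distribution along the resulting isomorphism $\tilde{\Phi}$. The paper's version is terser---it sets $\tilde{H}':= d\tilde{\Phi}(\tilde{H})$ directly and asserts that this satisfies equation \eqref{eq:1}---whereas you assemble $\tilde{H}'$ slice-by-slice and appeal to Remark \ref{rmk:iso_deformations}; the content is the same.
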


\begin{proof}
  By Theorem \ref{thm:stability_compact}, there exists
  an isomorphism $\tilde{\Phi} = \{\Phi_\tau\}$ between $\tilde{G}$ and $G \times I$ that satisfies equation
  \eqref{eq:5}. Hence, $\tilde{H}':=
  d\tilde{\Phi}(\tilde{H})$ satisfies equation
  \eqref{eq:1} and $(G,H'_\tau)$ is a contact
  groupoid for all $\tau$. This completes the proof.
\end{proof}

\subsection{Deformations of co-oriented contact groupoids}\label{sec:deform-co-orient}

\begin{defn}\label{defn:deformation_co-oriented}
  A {\bf deformation of a co-oriented contact groupoid $\boldsymbol{(G,\alpha,F)}$} is a deformation
  $\tilde{G} = \{G_{\tau}\}$ of $G$ together with a Lie groupoid
  homomorphism $\tilde{F}: \tilde{G} \to \Rr^*$ and a surjective $\tilde{F}$-multiplicative 1-form $\tilde{\alpha} \in
  \Omega^{1}(\tilde{G})$ such that
  \begin{itemize}[leftmargin=*]
  \item for all $\tau$, $(G_{\tau},\alpha_\tau,F_{\tau}):=
    (G_{\tau}, i^*_{\tau}\tilde{\alpha},i^*_{\tau}\tilde{F})$
    is a co-oriented contact groupoid,
  \item for all $(g,\tau) \in G \times I$,
    \begin{equation}
      \label{eq:25}
      \{0_g\} \oplus
      T_\tau I \subset \ker
    \tilde{\alpha}_{(g,\tau)}, \text{ and},
    \end{equation}
  \item as co-oriented contact groupoids, $(G,\alpha,F) = (G_0,\alpha_0,F_0)$.
  \end{itemize}
  We use the notation by
  $(\tilde{G},\tilde{\alpha},\tilde{F}) =
  \{(G_{\tau},\alpha_\tau,F_{\tau})\}$; if $\{G_\tau\}$ is the
  constant deformation, we write $(G \times I,
  \tilde{\alpha},\tilde{F}) = \{(G,\alpha_\tau,F_\tau)\}$.
\end{defn}

\begin{rmk}\label{rmk:deformations_co-oriented}
  A Lie groupoid homomorphism $\tilde{F} : \tilde{G} \to \Rr^*$ is
  completely determined by the Lie groupoid homomorphisms $F_\tau:
  G_\tau \to \Rr^*$ for all $\tau$. Moreover, by equation
  \eqref{eq:25}, the 1-form $\tilde{\alpha}$ is completely determined
  by $\alpha_\tau$ for all $\tau$. Therefore, the multiplicative 1-form
  $(\tilde{\alpha},\tilde{F})$ is completely determined by
  $(\alpha_\tau,F_\tau)$ for all $\tau$. This justifies the above notation (cf. Remarks
  \ref{rmk:contrast_defn} and \ref{rmk:strict_mult_distn}).
\end{rmk}

The following result is the `smooth 1-parameter' analog of Proposition
\ref{prop:canonical_form}.

\begin{corollary}\label{cor:strict_strict}
  Let $(\tilde{G},\tilde{H})$ be a deformation of a co-orientable contact
  groupoid $(G,H)$. For any multiplicative 1-form
  $(\tilde{\alpha},\tilde{F})$ for $(\tilde{G},\tilde{H})$, $(\tilde{G},\tilde{\alpha},\tilde{F})$ is a deformation of the
  co-oriented contact groupoid $(G,\alpha_0,F_0)$ such that
  $(\alpha_\tau,F_\tau)$ is a multiplicative contact form for
  $(G_\tau,H_\tau)$ for all $\tau$. Conversely, any
  deformation $(\tilde{G},\tilde{\alpha},\tilde{F})$ of a co-oriented contact groupoid $(G,\alpha,F)$ induces a
  deformation $(\tilde{G}, \tilde{H}:= \ker \tilde{\alpha})$ of the
  co-orientable contact groupoid $(G,H:=\ker\alpha)$ such that
  $(\alpha_\tau,F_\tau)$ is a multiplicative contact form for $(G_\tau,H_\tau)$.
\end{corollary}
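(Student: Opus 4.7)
The plan is to verify, in each direction, that the defining conditions for a deformation of a (co-)oriented contact groupoid are satisfied by unpacking the definitions and using Proposition \ref{prop:canonical_form} (or its corank-one version from Remark \ref{rmk:corank_one_mult_dist}), together with Lemma \ref{lemma:family_mult_distns} and the defining equation \eqref{eq:1} of a deformation.

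For the forward direction, start from $(\tilde{G},\tilde{H})$ and a chosen multiplicative $1$-form $(\tilde{\alpha},\tilde{F})$ for $(\tilde{G},\tilde{H})$; note such a form exists by Corollary \ref{cor:proper_co-orientable}'s statement applied after trivializing $\tilde{L}$ as in Lemma \ref{lemma:deformation_underlying_strict}, but here it is given. Define $(\alpha_\tau,F_\tau) := (i_\tau^*\tilde{\alpha}, i_\tau^*\tilde{F})$ for each $\tau$. Since $i_\tau$ is a Lie groupoid embedding, pulling back the $\tilde{F}$-multiplicativity relation \eqref{eq:mult_general} gives that $\alpha_\tau$ is $F_\tau$-multiplicative; since $\tilde{H} = \ker\tilde{\alpha}$ and, by \eqref{eq:1}, $T_gG_\tau \cap \tilde{H}_{(g,\tau)} = H_\tau$, we get $\ker\alpha_\tau = H_\tau$. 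Remark \ref{rmk:contact_form} then yields that $(G_\tau,\alpha_\tau,F_\tau)$ is a co-oriented contact groupoid. Equation \eqref{eq:25} is immediate from \eqref{eq:1} and $\tilde{H} = \ker\tilde{\alpha}$. Surjectivity of $\tilde{\alpha}$ holds because each $\alpha_\tau$ is surjective (Remark \ref{rmk:contact_form}) and $\{0_g\}\oplus T_\tau I \subset \ker\tilde{\alpha}$ by \eqref{eq:25}, so $\tilde{\alpha}$ splits along the fibers as $\alpha_\tau$ plus zero. Finally, $(G_0,\alpha_0,F_0) = (G,\alpha_0,F_0)$ by the convention $G_0 = G$.

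For the converse, start from a deformation $(\tilde{G},\tilde{\alpha},\tilde{F})$ of $(G,\alpha,F)$ and set $\tilde{H} := \ker\tilde{\alpha}$. Since $\tilde{\alpha}$ is a pointwise surjective multiplicative $1$-form with values in the representation given by $\tilde{F}$, Proposition \ref{prop:canonical_form} (equivalently, Remark \ref{rmk:corank_one_mult_dist}) implies that $\tilde{H}$ is a multiplicative distribution on $\tilde{G}$. To verify that $\tilde{H}$ is a \emph{family} of multiplicative distributions on $\tilde{G} \rightrightarrows \tilde{M} \to I$ in the sense of Definition \ref{defn:family_mult_distns}, nothing further is required beyond $\tilde{H}$ being a Lie subgroupoid of $T\tilde{G}$ over $T\tilde{M}$. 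Each fiber $(G_\tau, H_\tau := \ker\alpha_\tau)$ is a contact groupoid by the hypothesis on $(\tilde{G},\tilde{\alpha},\tilde{F})$. It remains to check condition \eqref{eq:1}. By \eqref{eq:25}, $\{0_g\}\oplus T_\tau I \subset \tilde{H}_{(g,\tau)}$; and clearly $H_\tau \subset \tilde{H}_{(g,\tau)}$ since $\alpha_\tau = i_\tau^*\tilde{\alpha}$. Thus $H_\tau \oplus T_\tau I \subset \tilde{H}_{(g,\tau)}$, and a dimension count
\[
\dim (H_\tau \oplus T_\tau I) = (\dim G_\tau - 1) + \dim I = \dim \tilde{G} - 1 = \dim \tilde{H}_{(g,\tau)}
\]
forces equality. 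Finally, $(G_0,H_0) = (G,\ker\alpha) = (G,H)$.

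The statement is essentially a bookkeeping compatibility check, so there is no serious obstacle; the one subtle point I would emphasize is the dimension argument above, which is what allows one to conclude equality in \eqref{eq:1} from the pointwise inclusions coming from \eqref{eq:25} and from $\alpha_\tau = i_\tau^*\tilde{\alpha}$.
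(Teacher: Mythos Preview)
Your proof is correct and follows essentially the same approach as the paper's: both directions amount to checking the defining conditions via Proposition~\ref{prop:canonical_form} and the interplay between equations \eqref{eq:1} and \eqref{eq:25}. The paper packages the fiberwise restriction of multiplicativity through Corollary~\ref{cor:family_forms}, whereas you argue directly by pulling back along $i_\tau$; and where the paper simply invokes Remark~\ref{rmk:strict_mult_distn} to pass from \eqref{eq:25} to \eqref{eq:1}, you spell out the underlying dimension count---but these are stylistic rather than substantive differences.
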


\begin{proof}
  Let $(\tilde{G},\tilde{H})$ be a deformation a co-orientable contact
  groupoid $(G,H)$ and let $(\tilde{\alpha},\tilde{F})$ be a multiplicative 1-form for
  $\tilde{H}$. The latter is equivalent to a
  trivialization of $\tilde{L} =
  T\tilde{G}/\tilde{H}|_{\tilde{M}}$. Then Proposition
  \ref{prop:canonical_form} and Corollary \ref{cor:family_forms} (the
  latter applied to the canonical 1-forms composed with the above
  trivialization), imply that $(\alpha_\tau,F_\tau)$ is a multiplicative contact form for
  $(G_\tau,H_\tau)$ for all $\tau$. Moreover, equation \eqref{eq:25} holds by
  equation \eqref{eq:1}. Hence, $(\tilde{G},\tilde{\alpha},\tilde{F})$ is a deformation of the
  co-oriented contact groupoid $(G,\alpha_0,F_0)$ with the desired
  property.

  Conversely, let $(\tilde{G},\tilde{\alpha},\tilde{F})$ be a
  deformation of $(G,\alpha,F)$ and set $\tilde{H} := \ker
  \tilde{\alpha}$. By Proposition \ref{prop:canonical_form} and
  Corollary \ref{cor:family_forms}, 
  $(\alpha_\tau,F_\tau)$ is a multiplicative contact form for
  $(G_\tau,H_\tau)$ for all $\tau$. Moreover, $\tilde{H}$ satisfies equation
  \eqref{eq:1} since $\tilde{\alpha}$ satisfies equation
  \eqref{eq:25}. Hence, $(\tilde{G},\tilde{H})$ is a deformation of
  $(G,H)$ with the desired property.
\end{proof}


In a deformation of a co-oriented contact groupoid, the sign of $F$ is
constant (see Definition \ref{defn:reeb_cocycle}). More precisely, the
following holds.

\begin{lemma}\label{lemma:co-oriented_defo}
  Let $(\tilde{G},\tilde{\alpha},\tilde{F}) =
  \{(G_\tau,\alpha_\tau,F_\tau)\}$ be a deformation of a co-oriented
  contact groupoid $(G,\alpha,F)$. Let $\mathrm{pr} : G \times I \to G$
  denote projection onto the first component and let $\tilde{\sigma}$
  (respectively $\sigma_\tau$) denote the sign of $\tilde{F}$
  (respectively $F_\tau$). Then $\tilde{\sigma} = \mathrm{pr}^*\sigma_0$
  and $\sigma_\tau = \sigma_0$ for all $\tau$.
\end{lemma}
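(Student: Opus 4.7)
The plan is to exploit that the sign of a nowhere-vanishing continuous function is locally constant, and then use the product structure $\tilde{G} = G \times I$ together with connectedness of $I$ to propagate the value at $\tau = 0$.

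First I would observe that, since $\tilde{F}\colon \tilde{G} \to \Rr^*$ is smooth (in particular continuous) and nowhere zero, the function $\tilde{\sigma} = \mathrm{sgn}(\tilde{F})\colon \tilde{G} \to \{\pm 1\}$ is continuous into a discrete space, hence locally constant on $\tilde{G} = G \times I$. Moreover, since $\tilde{\sigma}$ is a Lie groupoid homomorphism into $\{\pm 1\}$ (by the same argument as for $\sigma$ in the co-oriented setting, see the discussion after equation \eqref{eqn:1000}), it is automatically $\mathbb{Z}/2$-valued and smooth.

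Next I would fix $g \in G$ and restrict $\tilde{\sigma}$ to the slice $\{g\} \times I \subset G \times I$. Since $I$ is a connected interval and $\tilde{\sigma}$ is locally constant, the restriction $\tau \mapsto \tilde{\sigma}(g,\tau)$ is constant. Evaluating at $\tau = 0$ and recalling that the fiber over $0$ is, as a co-oriented contact groupoid, $(G,\alpha,F) = (G_0,\alpha_0,F_0)$, we get
\[
\tilde{\sigma}(g,\tau) = \tilde{\sigma}(g,0) = \sigma_0(g) = (\mathrm{pr}^*\sigma_0)(g,\tau),
\]
which establishes the first identity $\tilde{\sigma} = \mathrm{pr}^*\sigma_0$.

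For the second claim, I would use the compatibility $\sigma_\tau = \mathrm{sgn}(F_\tau) = \mathrm{sgn}(i_\tau^*\tilde{F}) = i_\tau^*\tilde{\sigma}$, and then combine it with the identity just proved:
\[
\sigma_\tau \;=\; i_\tau^*\tilde{\sigma} \;=\; i_\tau^*\,\mathrm{pr}^*\sigma_0 \;=\; (\mathrm{pr}\circ i_\tau)^*\sigma_0 \;=\; \sigma_0,
\]
where the last equality uses that $\mathrm{pr}\circ i_\tau = \mathrm{id}_G$. The argument is essentially purely topological; there is no real obstacle, the only point requiring mild care is that we are applying local constancy along fibers of $\mathrm{pr}$, which relies on equation \eqref{eq:1} being built into the deformation (so that $\tilde{G} = G \times I$ as a smooth manifold and $I$ is an interval).
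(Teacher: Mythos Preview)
Your proof is correct and follows essentially the same approach as the paper: both use connectedness of $I$ to conclude that $\tau \mapsto \tilde{\sigma}(g,\tau)$ is constant for each fixed $g$, and then pull back via $i_\tau$ to obtain $\sigma_\tau = \sigma_0$. Your write-up simply makes explicit the underlying reasons (local constancy of $\mathrm{sgn}$ and the identity $\mathrm{pr}\circ i_\tau = \mathrm{id}_G$); the only minor quibble is that the product structure $\tilde{G} = G \times I$ comes from the definition of a deformation of Lie groupoids rather than from equation~\eqref{eq:1}, but this does not affect the argument.
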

\begin{proof}
  Since $I$ is connected, for any $g \in G$, the map $\tau \mapsto \tilde{\sigma}(g,\tau)$ is
  constant. Hence, $\tilde{\sigma} =
  \mathrm{pr}^*\sigma_0$ and, by Definition \ref{defn:deformation_co-oriented}, $\sigma_\tau =
  i^*_\tau\mathrm{pr}^*\sigma_0 = \sigma_0$ as desired.
\end{proof}

To conclude this section, we prove the following results, which are the `smooth 1-parameter' analogs of Proposition
\ref{prop:proper_co-orientable} and of Corollary
\ref{cor:proper_co-orientable}.

\begin{Prop}\label{familyOfForms}
  Let $(\tilde{G},\tilde{\alpha},\tilde{F})$ be a deformation of a co-oriented
  contact groupoid $(G,\alpha,F)$ and let $\sigma =
  \mathrm{sgn}(F)$. If the cocycle $\tilde{r}$ associated to
  $(\tilde{G},\tilde{\alpha},\tilde{F})$ is a coboundary, then, for all
  $\tilde{\kappa} \in C^{\infty}(\tilde{M})$ satisfying equation \eqref{eq:reeb_trivial}, $(e^{\tilde{t}^*\tilde{\kappa}}\tilde{\alpha},\mathrm{pr}^*\sigma)$
  is a multiplicative 1-form for $(\tilde{G},\tilde{H} = \ker \tilde{\alpha})$, where
  $\mathrm{pr} : \tilde{G} = G \times I \to G$ is projection onto
  the first component.
\end{Prop}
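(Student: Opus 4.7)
The plan is to mimic the proof of Proposition \ref{prop:proper_co-orientable} in the one-parameter setting, replacing the co-orientable contact groupoid with the corank-one multiplicative distribution $(\tilde{G},\tilde{H}=\ker\tilde{\alpha})$ and using the trivializations afforded by $\tilde{\alpha}$ and by $e^{\tilde{t}^*\tilde{\kappa}}\tilde{\alpha}$. The key observation is that $\tilde{\alpha}\in\Omega^1(\tilde{G})$ is pointwise surjective with $\tilde{H}=\ker\tilde{\alpha}$, so $T\tilde{G}/\tilde{H}$ is a trivializable line bundle and $(\tilde{\alpha},\tilde{F})$ is a multiplicative 1-form for $(\tilde{G},\tilde{H})$ in the sense of Remark \ref{rmk:corank_one_mult_dist}. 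In particular, the analog of Lemma \ref{lemma:choice_mult_contact_form} discussed there applies.

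First, I would apply that analog of Lemma \ref{lemma:choice_mult_contact_form} with the nowhere vanishing function $a:=e^{\tilde{\kappa}}\in C^\infty(\tilde{M})$. This directly produces the multiplicative 1-form
\begin{equation*}
(\tilde{\alpha}',\tilde{F}')=\left(e^{\tilde{t}^*\tilde{\kappa}}\,\tilde{\alpha},\ \frac{\tilde{t}^*e^{\tilde{\kappa}}}{\tilde{s}^*e^{\tilde{\kappa}}}\,\tilde{F}\right)=\left(e^{\tilde{t}^*\tilde{\kappa}}\,\tilde{\alpha},\ e^{\tilde{t}^*\tilde{\kappa}-\tilde{s}^*\tilde{\kappa}}\,\tilde{F}\right)
\end{equation*}
for $(\tilde{G},\tilde{H})$. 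So all that remains is to identify the second component with $\mathrm{pr}^*\sigma$.

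For this, I would use the decomposition $\tilde{F}=\tilde{\sigma}\,e^{\tilde{r}}$ coming from equation \eqref{eqn:1000} applied to $\tilde{F}$, combined with the coboundary hypothesis $\tilde{r}=\tilde{s}^*\tilde{\kappa}-\tilde{t}^*\tilde{\kappa}$. The exponential factors cancel, leaving $\tilde{F}'=\tilde{\sigma}$. Finally, Lemma \ref{lemma:co-oriented_defo} gives $\tilde{\sigma}=\mathrm{pr}^*\sigma_0=\mathrm{pr}^*\sigma$, which finishes the argument.

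There is no real obstacle beyond bookkeeping: the proof is a direct transcription of the proof of Proposition \ref{prop:proper_co-orientable} into the deformation setting, where the only substantive inputs are the corank-one version of Lemma \ref{lemma:choice_mult_contact_form} (already noted in Remark \ref{rmk:corank_one_mult_dist}) and the constancy of the sign along the deformation (Lemma \ref{lemma:co-oriented_defo}). The latter is what ensures that the multiplicative function in the resulting 1-form depends only on $G$, i.e., is the pullback $\mathrm{pr}^*\sigma$ and not some genuinely $\tau$-dependent object.
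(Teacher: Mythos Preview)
Your proposal is correct and follows essentially the same approach as the paper. The paper's proof simply invokes Lemma \ref{lemma:co-oriented_defo} to identify $\tilde{\sigma}=\mathrm{pr}^*\sigma$, and then appeals directly to Remark \ref{rmk:corank_one_mult_dist} together with Proposition \ref{prop:proper_co-orientable}; you have unpacked the latter step by explicitly running the argument of Proposition \ref{prop:proper_co-orientable} (i.e., applying the corank-one version of Lemma \ref{lemma:choice_mult_contact_form} with $a=e^{\tilde{\kappa}}$ and cancelling the exponentials), which amounts to the same thing.
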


\begin{proof}
  By Lemma \ref{lemma:co-oriented_defo}, the sign of
  $\tilde{F}$ equals $\mathrm{pr}^*\sigma$. By Remark
  \ref{rmk:corank_one_mult_dist} and Proposition \ref{prop:proper_co-orientable}, $(e^{\tilde{t}^*\tilde{\kappa}}\tilde{\alpha},\mathrm{pr}^*\sigma)$
  is a multiplicative 1-form for $(\tilde{G},\tilde{H})$, as desired.
\end{proof}

Proposition \ref{familyOfForms} and vanishing of differentiable
cohomology in positive degrees for proper Lie groupoids (see
\cite[Proposition 1]{crainic}), immediately
imply the following result.

\begin{corollary}\label{cor:existence_form}
  Let $(\tilde{G},\tilde{H})$ be a proper deformation of a co-orientable
  contact groupoid $(G,H)$. Then there exist a Lie groupoid
  homomorphism $\sigma : G \to \{ \pm 1\}$ and $\tilde{\alpha} \in
  \Omega^1(\tilde{G})$ such
  that $(\tilde{\alpha},\mathrm{pr}^*\sigma)$ is a multiplicative
  1-form for $(\tilde{G},\tilde{H})$, where $\mathrm{pr} : \tilde{G} = G \times I \to G$ is
  projection onto the first component.
\end{corollary}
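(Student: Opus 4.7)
The plan is to combine Proposition \ref{familyOfForms} with the vanishing of differentiable cohomology in positive degrees for proper Lie groupoids (\cite[Proposition 1]{crainic}). Explicitly, I would first produce \emph{any} multiplicative 1-form for $(\tilde{G},\tilde{H})$; properness will then force its Reeb cocycle to be a coboundary, and Proposition \ref{familyOfForms} will upgrade it to a multiplicative 1-form of the required form.

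The first step is to justify the existence of some multiplicative 1-form on $(\tilde{G},\tilde{H})$. Observe that $\tilde{H}$ has corank one in $T\tilde{G}$ by equation \eqref{eq:1}, and that by Lemma \ref{lemma:deformation_underlying_strict} (together with equation \eqref{eq:9}) the line bundle $\tilde{L}:=T\tilde{G}/\tilde{H}|_{\tilde{M}}$ is isomorphic to $\mathrm{pr}^*L$. Since co-orientability of $(G,H)$ means that $L$ is trivializable, so is $\tilde{L}$; hence Remark \ref{rmk:corank_one_mult_dist} applies and yields a multiplicative 1-form $(\tilde{\alpha}_0,\tilde{F})$ for $(\tilde{G},\tilde{H})$. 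By Corollary \ref{cor:strict_strict}, $(\tilde{G},\tilde{\alpha}_0,\tilde{F})$ is then a deformation of the co-oriented contact groupoid $(G,i_0^*\tilde{\alpha}_0,i_0^*\tilde{F})$.

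The second step is to apply Proposition \ref{familyOfForms}. Let $\tilde{r}:=\ln|\tilde{F}|$ denote the associated Reeb cocycle, a 1-cocycle in the differentiable cohomology of $\tilde{G}$. Since $\tilde{G}$ is proper, \cite[Proposition 1]{crainic} produces $\tilde{\kappa}\in C^\infty(\tilde{M})$ satisfying equation \eqref{eq:reeb_trivial}. Setting $\tilde{\alpha}:=e^{\tilde{t}^*\tilde{\kappa}}\tilde{\alpha}_0$ and $\sigma:=\mathrm{sgn}(i_0^*\tilde{F}):G\to\{\pm 1\}$, and invoking Lemma \ref{lemma:co-oriented_defo} (which identifies the sign of $\tilde{F}$ with $\mathrm{pr}^*\sigma$), Proposition \ref{familyOfForms} yields that $(\tilde{\alpha},\mathrm{pr}^*\sigma)$ is the required multiplicative 1-form for $(\tilde{G},\tilde{H})$.

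I do not expect any real obstacle: the substantive work has already been carried out in Proposition \ref{familyOfForms} and Corollary \ref{cor:strict_strict}, and Crainic's vanishing is a black-box input whose hypothesis coincides with properness of the deformation. The only piece of bookkeeping is to observe that the sign of $\tilde{F}$ descends from $\tilde{G}$ to $G$, which is precisely the content of Lemma \ref{lemma:co-oriented_defo}.
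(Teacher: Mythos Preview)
Your proposal is correct and follows essentially the same approach as the paper, which simply states that the corollary follows immediately from Proposition~\ref{familyOfForms} together with the vanishing of differentiable cohomology for proper Lie groupoids. You have merely made explicit the preliminary step of producing an initial multiplicative 1-form $(\tilde{\alpha}_0,\tilde{F})$ for $(\tilde{G},\tilde{H})$ via trivializability of $\tilde{L}$ and Corollary~\ref{cor:strict_strict}, which the paper leaves implicit in its invocation of Proposition~\ref{familyOfForms}.
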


\section{Multiplicative Gray Stability}\label{sec:mult-vers-grays}
In this section we prove our main result, Theorem \ref{thm:gray_dist}. Together with Corollary
\ref{cor:strict_deformations_mult_structures}, it yields stability of
compact contact groupoids over connected manifolds (see Corollary \ref{cor:stability}). The proof of Theorem \ref{thm:gray_dist}
is effectively given by combining Theorems \ref{CompactGray} and
\ref{thm:main_not_triv}. We present the proof in this
fashion to emphasize these other results, which are interesting in
their own right, and to simplify the exposition.

\begin{Theo}\label{thm:gray_dist}
  Let $(G,H)$ be a compact contact groupoid over a connected
  manifold $M$. Any deformation of $(G,H)$ of the form $(G\times I,
  \tilde{H}) = \{(G,H_\tau)\}$ is trivial.
\end{Theo}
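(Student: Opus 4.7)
The plan is to split the argument along the two cases that the paper highlights: when $L:=TG/H|_M$ is trivializable (the co-orientable case, Theorem \ref{CompactGray}) and when it is not (Theorem \ref{thm:main_not_triv}). In both cases the strategy is the Moser trick adapted to the multiplicative setting.

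\emph{Co-orientable case.} Since $G$ is compact, the deformation $(G\times I,\tilde H)$ is proper, so Corollary \ref{cor:existence_form} furnishes a Lie groupoid homomorphism $\sigma:G\to\{\pm 1\}$ and $\tilde\alpha\in\Omega^1(G\times I)$ such that $(\tilde\alpha,\mathrm{pr}^*\sigma)$ is a multiplicative 1-form for $(G\times I,\tilde H)$; writing $\alpha_\tau:=i_\tau^*\tilde\alpha$, each $(\alpha_\tau,\sigma)$ is a multiplicative contact form for $(G,H_\tau)$. I then run the Moser trick on $G$: I look for a time-dependent vector field $X_\tau\in\XX(G)$ whose flow $\Phi_\tau$ satisfies $\Phi_\tau^*\alpha_\tau=f_\tau\alpha_0$ for a smooth family of positive $f_\tau\in C^\infty(G)$. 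Differentiating yields
\[
\dot\alpha_\tau+\mathcal L_{X_\tau}\alpha_\tau=\mu_\tau\alpha_\tau,\qquad \mu_\tau:=\tfrac{d}{d\tau}\log f_\tau.
\]
Using the splitting \eqref{eq:6}--\eqref{eq:7} relative to $R^{\alpha_\tau}$, I set $\mu_\tau:=\dot\alpha_\tau(R^{\alpha_\tau})$ and take $X_\tau\in\Gamma(H_\tau)$ to be the unique section with $\iota_{X_\tau}d\alpha_\tau|_{H_\tau}=(\mu_\tau\alpha_\tau-\dot\alpha_\tau)|_{H_\tau}$, using non-degeneracy of $d\alpha_\tau|_{H_\tau}$ from Remark \ref{rmk:contact_form}. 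Compactness of $G$ guarantees that the flow $\Phi_\tau$ exists for all $\tau\in I$.

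The crucial point is that $X_\tau$ is a \emph{multiplicative} vector field, i.e.\ $X_\tau:G\to TG$ is a Lie groupoid morphism, so that $\Phi_\tau$ is a Lie groupoid automorphism of $G$. This uses that both $\alpha_\tau$ and $\dot\alpha_\tau$ are $\sigma$-multiplicative (in the sense that they satisfy equation \eqref{eq:2} with the same homomorphism $\sigma$), which transports to the corresponding multiplicativity condition on $X_\tau$; the bookkeeping is carried out in Appendix \ref{sec:sigma-mult-forms}. With $X_\tau$ multiplicative, $\Phi_\tau$ is a family of Lie groupoid automorphisms with $\Phi_0=\mathrm{id}$ and $d\Phi_\tau(H_\tau)=H_0$, which is exactly the required trivialization.

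\emph{Non-co-orientable case.} I pass to the co-orientable finite cover $Q:(\hat G,\hat H)\to(G,H)$ of Definition \ref{defn:co-orientable_cover_gpd}. Pulling back $\{H_\tau\}$ along the covering yields a deformation $\{(\hat G,\hat H_\tau)\}$ of $(\hat G,\hat H)$, and Remark \ref{lemma:compactness_preserved} gives that $\hat G$ is compact. Applying the co-orientable case produces a family $\{\hat\Phi_\tau\}$ of Lie groupoid automorphisms of $\hat G$ with $\hat\Phi_0=\mathrm{id}$ and $d\hat\Phi_\tau(\hat H_\tau)=\hat H_0$. To descend $\hat\Phi_\tau$ to $G$, I must show equivariance under the $\Zz_2\times\Zz_2$-action of property \ref{item:z_2_squared}. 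Here Lemma \ref{lemma:the_fs_are_constant} is decisive: the multiplicative contact forms $\hat\alpha_\tau$ furnished by Corollary \ref{cor:existence_form} transform under $\varphi_{ab}$ by the constant $\pm 1$, so the Moser data $(\mu_\tau,X_\tau)$ is equivariant, hence so is $\hat\Phi_\tau$, and it descends to the desired isotopy $\Phi_\tau$ of $G$.

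\emph{Main obstacle.} The principal difficulty is verifying multiplicativity of the Moser vector field $X_\tau$: since $\tilde\alpha$ is valued in a representation twisted by the non-trivial cocycle $\sigma$, the standard multiplicativity calculation for Moser vector fields in the symplectic groupoid setting does not apply directly, and one needs the $\sigma$-multiplicative bookkeeping of Appendix \ref{sec:sigma-mult-forms}. A secondary obstacle is the descent step in the non-co-orientable case, which works only because Lemma \ref{lemma:the_fs_are_constant} rigidly pins the $\Zz_2\times\Zz_2$-action on the chosen contact forms to a sign, using connectedness of $M$.
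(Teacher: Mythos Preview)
Your proposal is correct and follows essentially the same approach as the paper: the case split along co-orientability, the use of Corollary~\ref{cor:existence_form} to obtain a $\sigma$-multiplicative family $(\alpha_\tau,\sigma)$, the Moser argument with $X_\tau\in\Gamma(H_\tau)$ and $\mu_\tau=\dot\alpha_\tau(R^{\alpha_\tau})$, the verification of multiplicativity of $X_\tau$ via the $\sigma$-twisted machinery of Appendix~\ref{sec:sigma-mult-forms}, and the descent from the co-orientable finite cover using Lemma~\ref{lemma:the_fs_are_constant} all match the paper's proofs of Theorems~\ref{CompactGray} and~\ref{thm:main_not_triv}. The only points you leave implicit that the paper spells out are the intermediate check that $\mu_\tau$ is orbit-constant (equation~\eqref{eq:15}, via right- and left-invariance of the Reeb vector field from Lemma~\ref{lemma:reeb_gpd}) and, in the non-co-orientable case, the explicit verification that $\varphi_{ab}^*\hat\mu_\tau=\hat\mu_\tau$; both are routine once flagged.
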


By Corollary \ref{cor:strict_deformations_mult_structures}, Theorem
\ref{thm:gray_dist} immediately implies the following stability result.

\begin{corollary}\label{cor:stability}
  Any deformation of a compact contact groupoid over a connected
  manifold is trivial.
\end{corollary}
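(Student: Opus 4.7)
The plan is to deduce the corollary immediately from the two stability results already established, namely Corollary \ref{cor:strict_deformations_mult_structures} (which trivializes the underlying Lie groupoid part of the deformation) and Theorem \ref{thm:gray_dist} (which trivializes the contact part, given that the underlying Lie groupoid is constant). The point is that every deformation of a compact contact groupoid factors, up to isomorphism, through a deformation of the form $\{(G,H_\tau)\}$ to which Theorem \ref{thm:gray_dist} applies directly.

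More precisely, let $(\tilde{G},\tilde{H})$ be a deformation of a compact contact groupoid $(G,H)$ over a connected manifold $M$. First, I would invoke Corollary \ref{cor:strict_deformations_mult_structures} to produce an isomorphism of deformations of $(G,H)$
\[
\tilde{\Phi} = \{\Phi_\tau\} : (\tilde{G},\tilde{H}) \longrightarrow (G \times I, \tilde{H}') = \{(G,H'_\tau)\},
\]
so that $\Phi_0 = \mathrm{id}$ and $(G,H'_0) = (G,H)$ as contact groupoids. Next, since $G$ is compact and $M$ is connected, Theorem \ref{thm:gray_dist} applies to $(G \times I, \tilde{H}')$ and yields an isomorphism of deformations of $(G,H)$
\[
\tilde{\Psi} = \{\Psi_\tau\} : (G \times I, \tilde{H}') \longrightarrow (G \times I, H \times I),
\]
i.e., a trivialization of $(G \times I, \tilde{H}')$.

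Composing, $\tilde{\Psi} \circ \tilde{\Phi} = \{\Psi_\tau \circ \Phi_\tau\}$ is an isomorphism from $(\tilde{G},\tilde{H})$ to the constant deformation of $(G,H)$, and it starts at the identity since $\Psi_0 = \Phi_0 = \mathrm{id}$ (cf. Remark \ref{rmk:iso_deformations}). This exhibits $(\tilde{G},\tilde{H})$ as trivial, completing the argument. There is no real obstacle here beyond unwinding definitions: the conceptual content has been absorbed into Corollary \ref{cor:strict_deformations_mult_structures} (reduction to constant underlying groupoid, based on the vanishing of deformation cohomology of compact Lie groupoids) and Theorem \ref{thm:gray_dist} (the multiplicative Gray stability proved via the Moser trick together with Corollary \ref{cor:existence_form}).
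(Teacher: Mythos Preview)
Your proposal is correct and follows exactly the paper's approach: the paper derives Corollary \ref{cor:stability} by stating that ``By Corollary \ref{cor:strict_deformations_mult_structures}, Theorem \ref{thm:gray_dist} immediately implies the following stability result.'' You have simply unpacked this one-line argument by making the two isomorphisms and their composition explicit.
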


\begin{proof}[Proof of Theorem \ref{thm:gray_dist}]
   We split the proof in two cases, namely 
   whether $L = TG/H|_M$ is trivializable or not.

   Suppose that $L$ is trivializable. Since $G$ is compact, $G \times
   I$ is proper. Hence, by Corollary \ref{cor:existence_form}, there
   exist a Lie groupoid homomorphism $\sigma : \tilde{G} \to \{ \pm 1\}$
   and $\tilde{\alpha}\in \Omega^1(\tilde{G})$ such that $(\tilde{\alpha},\mathrm{pr}^*\sigma)$ is
   a multiplicative
   1-form for $(\tilde{G},\tilde{H})$, where $\mathrm{pr} : G \times I \to G$
   is projection onto the first component. For all $\tau$, set
   $\alpha_\tau := i^*_\tau \tilde{\alpha}$ and observe that 
   $H_{\tau} = \ker \alpha_\tau$. Then, by Corollary
   \ref{cor:strict_strict},
   $(G \times I,\tilde{\alpha},\mathrm{pr}^*\sigma) = \{(G,
   \alpha_\tau, \sigma)\}$ is a deformation of the co-oriented contact
   groupoid $(G, \alpha_0,\sigma)$. By Theorem \ref{CompactGray}
   below, there exist an automorphism $\tilde{\Phi} = \{\Phi_\tau\}$ of the constant
   deformation of $G$, and $\tilde{f} \in C^{\infty}(M \times I)$
   that is constant along the orbits of $G \times I$, such
   that $ \Phi_{\tau}^*\alpha_\tau =
   e^{t^*f_\tau}\alpha $ for all $\tau$, where $f_{\tau} = i_\tau^*\tilde{f}$. In particular, $\Phi_\tau : (G,H) \to (G,H_\tau)$ is an isomorphism of
   contact groupoids for all
   $\tau$. Hence, by Remark \ref{rmk:iso_deformations},
   $\tilde{\Phi}$ is an isomorphism of deformations of contact
   groupoids between the constant deformation and
   $(\tilde{G},\tilde{H})$, i.e.,  $(\tilde{G},\tilde{H})$ is trivial
   as desired.

   The case in which $L$ is not trivializable is precisely Theorem
   \ref{thm:main_not_triv} below. (This is where we use connectedness of
   $M$.) This completes the proof.
\end{proof}

\begin{Theo}\label{CompactGray} 
  Let $(G,\alpha,\sigma)$ be a compact co-oriented contact groupoid
  over $M$. Let $(G \times I, \tilde{\alpha},\mathrm{pr}^*\sigma) = \{(G,\alpha_\tau,\sigma)\}$ be
  a deformation of $(G,\alpha,\sigma)$. Then there exist an
  automorphism $\tilde{\Phi} = \{\Phi_\tau\}$ of the constant deformation $G \times I$ of $G$, and
  $\tilde{f} \in C^{\infty}(M \times I)$ that is constant along the orbits of
  $G \times I$, such that
  \begin{equation}
    \label{eq:10}
    \Phi_{\tau}^*\alpha_\tau =
    e^{t^*f_\tau}\alpha  
  \end{equation}
  \noindent
  for all $\tau$, where $f_{\tau} = i_\tau^*\tilde{f}$. 
\end{Theo}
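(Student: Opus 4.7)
The plan is to apply the Moser trick, adapted to the multiplicative setting. I will construct $\tilde{\Phi} = \{\Phi_\tau\}$ as the flow of a time-dependent vector field $X_\tau$ on $G$; crucially, I will require $X_\tau$ to be \emph{multiplicative}, so that each $\Phi_\tau$ is automatically a Lie groupoid automorphism with $\Phi_0 = \mathrm{id}$. Compactness of $G$ will ensure the flow exists on all of $I$ (after possibly shrinking $I$, which is harmless).

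Differentiating the desired identity $\Phi_\tau^*\alpha_\tau = e^{t^*f_\tau}\alpha$ in $\tau$ and pulling back by $\Phi_\tau^{-1}$, using that $\Phi_\tau$ covers the flow $\phi_\tau$ of $X_\tau|_M$, yields the Moser equation
\[
\mathcal{L}_{X_\tau}\alpha_\tau + \dot\alpha_\tau \;=\; t^*k_\tau\cdot\alpha_\tau, \qquad k_\tau := (\phi_\tau)_*\dot f_\tau.
\]
Imposing $X_\tau \in H_\tau = \ker\alpha_\tau$, Cartan's formula reduces this to $\iota_{X_\tau}d\alpha_\tau + \dot\alpha_\tau = t^*k_\tau\cdot\alpha_\tau$. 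Evaluating on $R_\tau := R^{\alpha_\tau}$ and using $d\alpha_\tau(R_\tau,-) = 0$ forces $t^*k_\tau = \dot\alpha_\tau(R_\tau)$. Since $\dot\alpha_\tau$ inherits $\sigma$-multiplicativity from $\alpha_\tau$ (by differentiating \eqref{eq:2} in $\tau$) and $R_\tau$ is right-invariant (Lemma \ref{lemma:reeb_gpd}), $\dot\alpha_\tau(R_\tau)$ is right-translation invariant and hence descends to a well-defined $k_\tau \in C^\infty(M)$. Moreover, applying the $\sigma$-multiplicativity of $\dot\alpha_\tau$ to the left-invariant Reeb $R^{\alpha_\tau,\mathtt{L}} = \sigma R_\tau$ (which follows from \eqref{eq:16} since $d\sigma \equiv 0$) yields $s^*k_\tau = t^*k_\tau$, i.e., $k_\tau$ is constant along orbits of $G$.

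With $k_\tau$ fixed, non-degeneracy of $d\alpha_\tau|_{H_\tau}$ (Remark \ref{rmk:contact_form}) uniquely specifies a section $X_\tau$ of $H_\tau$ solving $\iota_{X_\tau}(d\alpha_\tau|_{H_\tau}) = -\dot\alpha_\tau|_{H_\tau}$. The main obstacle is proving that this $X_\tau$ is in fact multiplicative, i.e., that $X_\tau : G \to TG$ is a Lie groupoid homomorphism. I expect this to follow from uniqueness combined with the $\sigma$-multiplicativity of all input data ($\alpha_\tau$, $d\alpha_\tau$, and $\dot\alpha_\tau$) and the fact that $H_\tau$ is a Lie subgroupoid of $TG$, by invoking the technical results of Appendix \ref{sec:sigma-mult-forms}; the idea is that the candidate multiplicative transport of $X_\tau$ satisfies the same symplectic equation pointwise, hence coincides with $X_\tau$.

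Finally, defining $\tilde f \in C^\infty(M\times I)$ by $f_0\equiv 0$ and $\partial_\tau f_\tau = \phi_\tau^*k_\tau$, the standard Moser computation (differentiating $e^{-t^*f_\tau}\Phi_\tau^*\alpha_\tau$ in $\tau$ and checking that it vanishes) gives $\Phi_\tau^*\alpha_\tau = e^{t^*f_\tau}\alpha$. Since $\phi_\tau$ preserves the orbit foliation of $G$ (being induced by a Lie groupoid automorphism) and each $k_\tau$ is constant along orbits, $\tilde f$ is constant along orbits of $G\times I$, yielding \eqref{eq:10}.
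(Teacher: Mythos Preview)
Your proposal is correct and follows essentially the same Moser-trick strategy as the paper: impose $X_\tau\in H_\tau$, determine $\mu_\tau=\dot\alpha_\tau(R_\tau)$ (your $t^*k_\tau$), use $\sigma$-multiplicativity of $\dot\alpha_\tau$ together with the right- and left-invariant Reeb vector fields to show $\mu_\tau$ is both $t$- and $s$-basic, and then integrate. The one place where your sketch diverges slightly is the proof that $X_\tau$ is multiplicative: the paper does not argue via uniqueness and ``multiplicative transport'' but instead writes $X_\tau$ explicitly as the composite $G\xrightarrow{\beta_\tau}(T^*G)^\sigma\to (H_\tau^*)^\sigma\xrightarrow{(d\alpha_\tau|_{H_\tau})^{-1}}H_\tau$ with $\beta_\tau=\mu_\tau\alpha_\tau-\dot\alpha_\tau$, and then invokes Lemmas~\ref{lemma:aux_gpd} and~\ref{lemma:sig-mult} to see that each factor is a Lie groupoid homomorphism for the $\sigma$-twisted VB-groupoid structures; this is cleaner than a uniqueness argument because it avoids having to separately verify source/target compatibility of $X_\tau$. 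Also, compactness of $G$ gives the flow on all of $I$ outright, so no shrinking is needed.
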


\begin{proof}
  We use the standard proof of Gray stability in contact geometry
  (see, e.g., \cite[Theorem 2.2.2]{Geiges}),
  making sure that all choices can be made in a multiplicative fashion. We look for an automorphism $\tilde{\Phi} =
  \{\Phi_\tau\}$ of the constant deformation $G \times I$ and for a
  positive function 
  $\tilde{a} \in C^{\infty}(G \times I)$ such that
  \begin{equation}\label{goal}
    \Phi_{\tau}^*\alpha_\tau=a_\tau\alpha \qquad \text{for all } \tau,
  \end{equation}
  \noindent
  where $a_\tau = i^*_\tau\tilde{a}$. Moreover, we assume that $\Phi_\tau$ is the flow of a time 
  dependent vector field $X_\tau\in\XX(G)$. Differentiating
  \eqref{goal} with respect to $\tau$, we obtain 
  \begin{equation}\label{rewrite}
    \Phi_{\tau}^*\Big{(}\frac{d\alpha_\tau}{d\tau}+\Lie_{X_\tau}\alpha_\tau\Big{)}
    =\frac{da_\tau}{d\tau}\frac{1}{a_\tau}\Phi_{\tau}^*\alpha_\tau. 
  \end{equation}
  \noindent
  Setting $\mu_\tau:=(\frac{d}{d\tau}\log
  a_\tau)\circ\Phi_{\tau}^{-1}$, equation \eqref{rewrite} is
  satisfied if and only if
  \begin{equation}\label{eq:12}
    \frac{d\alpha_\tau}{d\tau}+\Lie_{X_\tau}\alpha_\tau-\mu_\tau\alpha_\tau
    = 0.
  \end{equation}
  \noindent
  For all $\tau$, we look for a solution of \eqref{eq:12} of the form $X_\tau\in\Gamma(H_\tau)$, where $H_\tau = \ker \alpha_\tau$. Then equation \eqref{eq:12}
  reduces to
  \begin{equation}\label{theEq}
    \frac{d\alpha_\tau}{d\tau}+\iota_{X_\tau}d\alpha_\tau=\mu_\tau\alpha_\tau.
  \end{equation}
  \noindent
  Due to the defining relations \eqref{eq:Reeb_eqns}, evaluating 
  $R_{\tau}: = R^{\alpha_\tau}$ in \eqref{theEq} yields 
  \begin{equation*}\label{eq:14}
    \mu_\tau = \frac{d\alpha_\tau}{d\tau}(R_\tau), 
  \end{equation*}
  \noindent
  which determines $\mu_\tau$ uniquely. Moreover, from the decomposition \eqref{eq:7}, 
  $\mu_\tau\alpha_\tau - \frac{d\alpha_\tau}{d\tau}$ is a
  section of $\mathrm{Ann}(\langle R_\tau \rangle)$, thus implying that
  there is a unique solution $X_{\tau} \in \Gamma(H_\tau)$ of \eqref{theEq} depending smoothly on
  $\tau$.

  Suppose that $X_\tau$ is a
  multiplicative vector field and that
  \begin{equation}
    \label{eq:15}
    \mu_\tau(g)=\mu_\tau(1_{s(g)})=\mu_\tau(1_{t(g)})
  \end{equation}
  \noindent
  for all $\tau$ and for all $g \in G$.
  Then the flow $\Phi_\tau$ of $X_\tau$ is a Lie groupoid
  automorphism of $G$. Moreover, by compactness of $G$, the flow of
  $X_\tau$ exists for all $\tau$. Hence, we obtain an automorphism
  $\tilde{\Phi} = \{\Phi_\tau\}$ of the constant deformation of $G$
  satisfying equation \eqref{goal} for all $\tau$. Since $\mu_\tau$
  satisfies equation \eqref{eq:15} for all $\tau$, so does
  $a_\tau$. This is equivalent to $\tilde{a}$ satisfying the analog of
  equation \eqref{eq:15} for the groupoid $G \times I$. Since $\tilde{a}$ is positive by assumption,
  it follows that there exists $\tilde{f} \in C^{\infty}(M \times I)$ constant along the orbits
  of $G \times I$ such that $\tilde{a} =
  e^{\tilde{t}^*\tilde{f}}$. Hence, equation \eqref{eq:10} holds for
  all $\tau$.

  To complete the proof, we need to show that $X_\tau$ is a
  multiplicative vector field and that $\mu_\tau$
  satisfies \eqref{eq:15} for all $\tau$. To this end, we observe that
  $\frac{d\alpha_\tau}{d\tau}\in\Omega^1(G)$ is
  $\sigma$-multiplicative for all $\tau$. Moreover, since $d\sigma \equiv 0$, 
  $d\alpha_\tau\in\Omega^2(G)$ is also $\sigma$-multiplicative. This
  allows us to prove that, for all $\tau$, $\mu_\tau$ satisfies
  \eqref{eq:15}. Indeed, since $\frac{d\alpha_\tau}{d\tau}$ is
    $\sigma$-multiplicative and $R_\tau$ is right-invariant (see Lemma
    \ref{lemma:reeb_gpd}), we have that
    \begin{equation}
      \label{eq:26}
      \begin{split}
        \mu_\tau(g) & =\Big{(}\frac{d\alpha_\tau}{d\tau}\Big{)}_g(R_\tau)=\Big{(}\frac{d\alpha_\tau}{d\tau}\Big{)}_{1_{t(g)}g}(dm(R_\tau,0)) \\ 
        & =\Big{(}\frac{d\alpha_\tau}{d\tau}\Big{)}_{1_{t(g)}}(R_\tau)+\sigma(1_{t(g)})\Big{(}\frac{d\alpha_\tau}{d\tau}\Big{)}_g(0)=\mu_\tau(1_{t(g)})
      \end{split}
    \end{equation}
    \noindent
    for all $\tau$ and for all $g \in G$. Moreover, by Lemma
    \ref{lemma:reeb_gpd}, $
    R_{\tau,g} = dm(0_g,\sigma(g)R_{\tau, 1_{s(g)}})$ for all $\tau$ and for all $g \in G$. Hence, a
    computation entirely analogous to \eqref{eq:26} shows that $\mu_\tau(g) =
    \mu_\tau(1_{s(g)})$ for all $\tau$ and for all $g \in G$, as desired.

    It remains to prove that $X_\tau$ is multiplicative, i.e., the
    map $X_\tau : G \to H_\tau$ is a Lie groupoid homomorphism for all
    $\tau$. Since $\mu_\tau$ satisfies \eqref{eq:15} and $\alpha_\tau$ is
    $\sigma$-multiplicative for all $\tau$, the 1-form $\mu_\tau
    \alpha_\tau$ is also $\sigma$-multiplicative for all $\tau$.
    Hence, 
    $\beta_\tau:=\mu_\tau\alpha_\tau-\frac{d\alpha_\tau}{d\tau}$ 
  is $\sigma$-multiplicative. For each $\tau$,
  $X_\tau$ is the composition
  \begin{equation}
    \label{eq:13}
    \xymatrix{ G \ar[r]^-{\beta_\tau} & T^*G \ar[r] & \H_\tau^* \ar[r]^-{\cong}& \H_\tau, }
  \end{equation}
  \noindent
  where the middle map is the dual to the inclusion and the last is
  the inverse of the restriction of
  $(d\alpha_\tau)^\flat$ to $\H_\tau$. By Lemmas \ref{lemma:aux_gpd} and 
  \ref{lemma:sig-mult} in Appendix \ref{sec:sigma-mult-forms} all maps
  in \eqref{eq:13} are Lie groupoid
  homomorphisms\footnote{The Lie groupoid structures on $T^*G$
    and $H^*_\tau$ that we consider in the above argument are not the
    standard ones. In fact, the ones we consider are obtained by
    modifying 
    the standard ones appropriately using $\sigma$, as explained in
    Appendix \ref{sec:sigma-mult-forms}.} and, therefore, so is $X_\tau$, as desired.
\end{proof} 

\begin{theorem}\label{thm:main_not_triv}
  Let $(G,H)$ be a compact contact groupoid over a connected manifold $M$
  such that $L = TG/H|_M$ is not trivializable. Any deformation
  of $(G,H)$ of the form $(G\times I,
  \tilde{H}) = \{(G,H_\tau)\}$ is trivial.
  
\end{theorem}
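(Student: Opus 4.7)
The plan is to reduce to the co-orientable case by lifting the deformation to the co-orientable finite cover of $(G,H)$, applying Theorem \ref{CompactGray} there, and then descending the resulting family of automorphisms back to $G$ via an equivariance argument.

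Since $L \to M$ is not trivializable, the construction of Section \ref{subsec:co-or_finite_cover} gives a connected double cover $q : \hat{M} \to M$ and a co-orientable finite cover $Q : \hat{G} \to G$ with its $\Zz_2 \times \Zz_2$-action. I would extend this to the deformation by forming the pullback Lie groupoid $\hat{G} \times I$ of $G \times I$ along $\tilde q := q \times \mathrm{id}_I$, and endowing it with $\tilde{\hat{H}} := (d\tilde q)^!\tilde H$. By construction, $(\hat{G} \times I, \tilde{\hat H}) = \{(\hat G, \hat H_\tau)\}$ is a deformation of the compact co-orientable contact groupoid $(\hat G, \hat H_0)$ (compactness is preserved by Remark \ref{lemma:compactness_preserved}). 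Corollary \ref{cor:existence_form} then provides a Lie groupoid homomorphism $\hat\sigma : \hat G \to \{\pm 1\}$ and $\tilde{\hat\alpha} \in \Omega^1(\hat G \times I)$ making $(\tilde{\hat\alpha}, \mathrm{pr}^*\hat\sigma)$ multiplicative, after which Theorem \ref{CompactGray} supplies an automorphism $\tilde{\hat\Phi} = \{\hat\Phi_\tau\}$ of $\hat G \times I$ with $d\hat\Phi_\tau(\hat H_0) = \hat H_\tau$. Recall that $\hat\Phi_\tau$ is the flow of the unique time-dependent vector field $\hat X_\tau \in \Gamma(\hat H_\tau)$ characterized by
\begin{equation*}
    \iota_{\hat X_\tau} d\hat\alpha_\tau + \tfrac{d\hat\alpha_\tau}{d\tau} = \mu_\tau\,\hat\alpha_\tau, \qquad \mu_\tau := \tfrac{d\hat\alpha_\tau}{d\tau}(\hat R_\tau),
\end{equation*}
where $\hat\alpha_\tau := i_\tau^*\tilde{\hat\alpha}$ and $\hat R_\tau$ is its Reeb vector field.

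The main obstacle is showing that $\hat\Phi_\tau$ descends along $Q$ to an automorphism $\Phi_\tau$ of $G$; this reduces to proving that $\hat X_\tau$ is invariant under the free and proper $\Zz_2 \times \Zz_2$-action $\{\varphi_{ab}\}$ on $\hat G$. Because $\hat\sigma$ is independent of $\tau$, Lemma \ref{lemma:the_fs_are_constant} applies uniformly in $\tau$ and gives $\varphi_{ab}^*\hat\alpha_\tau = c_{ab}\hat\alpha_\tau$ for constants $c_{ab} \in \{\pm 1\}$. Consequently $\varphi_{ab}^*d\hat\alpha_\tau = c_{ab}\,d\hat\alpha_\tau$, the defining relations of the Reeb yield $d\varphi_{ab}(\hat R_\tau) = c_{ab}\,\hat R_\tau \circ \varphi_{ab}$, and together these imply $\varphi_{ab}^*\mu_\tau = \mu_\tau$. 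Pulling the defining equation of $\hat X_\tau$ back by $\varphi_{ab}$ and dividing by $c_{ab}$ shows that $\varphi_{ab}^*\hat X_\tau$ lies in $\Gamma(\hat H_\tau)$ and satisfies the same equation; by uniqueness, $\varphi_{ab}^*\hat X_\tau = \hat X_\tau$.

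The descended map $\Phi_\tau$ is a Lie groupoid automorphism of $G$: any pair of composable arrows in $G$ admits a composable lift to $\hat G$ since $Q$ is a surjective Lie groupoid homomorphism, so multiplicativity of $\hat\Phi_\tau$ transfers. Finally, $d\Phi_\tau(H_0) = H_\tau$ follows from $\hat H_\tau = Q^*H_\tau$ and $Q$ being a local diffeomorphism. Hence $\tilde\Phi := \{\Phi_\tau\}$ is an isomorphism of deformations from the constant deformation of $(G,H)$ to $(G \times I, \tilde H)$, as required.
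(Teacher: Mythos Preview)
Your proof is correct and follows essentially the same approach as the paper: lift the deformation to the co-orientable finite cover, apply Theorem~\ref{CompactGray} there, establish $\Zz_2\times\Zz_2$-equivariance of the Moser vector field using Lemma~\ref{lemma:the_fs_are_constant}, and descend. Your verification of $\varphi_{ab}^*\mu_\tau=\mu_\tau$ via the transformation law $d\varphi_{ab}(\hat R_\tau)=c_{ab}\,\hat R_\tau\circ\varphi_{ab}$ is slightly more direct than the paper's argument (which instead writes $\hat\mu_\tau=\hat t^*\hat\delta_\tau$ for an orbit-invariant function $\hat\delta_\tau$ on $\hat M$ and observes that the deck transformation $\gamma$ preserves $\hat G$-orbits), but the overall strategy is identical.
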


\begin{proof}
  Let $\mathrm{pr}: M \times I \to M$ be projection onto the first
  component and let $\tilde{L} = T(G\times I)/\tilde{H}|_{M\times
    I}$. Fix an isomorphism $\psi : \tilde{L} \to \mathrm{pr}^*L$ (see equation~\eqref{eq:9});
  this identifies $L_\tau$ with $L$ for all $\tau$. Consider the
  co-orientable finite cover $(\tilde{G}', \tilde{H}')$ of $(\tilde{G},\tilde{H})$ (see Remark
  \ref{rmk:co-orientable_cover_corank_one}), and let
  $(\hat{G},\hat{H})$ be the co-orientable finite cover of
  $(G,H)$. Then the choice of $\psi$ allows us to identify
  $\tilde{G}'$ with $\hat{G} \times I$. Under this identification, the
  natural $\Zz_2
  \times \Zz_2$-action on $\hat{G}$ becomes the following: for all
  $(a,b) \in \Zz_2 \times \Zz_2$,
  \begin{equation}
    \label{eq:18}
    (a,b) \cdot (\hat{g},\tau) := (\varphi_{ab}(\hat{g}),\tau),
  \end{equation}
  \noindent 
  where $\varphi_{ab}$ is the action of $(a,b)$ on $\hat{G}$. 
  Moreover, $(\hat{G}
  \times I, \tilde{H}') = \{(\hat{G},H'_{\tau})\}$ is a deformation of
  $(\hat{G},\hat{H})$ with the property that
  $(\hat{G},H'_{\tau})$ is the co-orientable finite cover
  $(\hat{G},\hat{H}_\tau)$ of
  $(G,H_{\tau}) $ for all $\tau$. Since $(\hat{G},\hat{H})$ is co-orientable by
  construction, and compact by Remark
  \ref{lemma:compactness_preserved}, we can argue as
  in the proof of Theorem
  \ref{thm:gray_dist}, i.e., there exists a Lie groupoid homomorphism
  $\hat{\sigma}:\hat G\to\{ \pm 1\}$ and $\tilde{\alpha}' \in
  \Omega^1(\tilde{G}')$ such
  that 
  $(\tilde{\alpha}', \mathrm{Pr}^*\hat{\sigma})$ is a multiplicative
  1-form for $(\tilde{G}',\tilde{H}')$,
  where $\mathrm{Pr}: \hat{G} \times I \to \hat{G}$ is projection onto
  the first component. Set
  $\hat{\alpha}_\tau:=\hat{i}_\tau^*\tilde{\alpha}'$, where
  $\hat{i}_\tau : \hat{G} \to \hat{G} \times I$,
  $\hat{i}_\tau(\hat{g}) = (\hat{g},\tau)$. Then we obtain a
  deformation $(\hat{G} \times I, \tilde{\alpha}',
  \mathrm{Pr}^*\hat{\sigma})$ of the co-oriented contact groupoid
  $(\hat{G},\hat{\alpha}_0,\hat{\sigma})$. Hence, Theorem
  \ref{CompactGray} can be applied. In fact, suppose that the
  time-dependent vector field constructed in the proof of Theorem
  \ref{CompactGray} is $\Zz_2 \times
  \Zz_2$-equivariant. Then the automorphism of
  the constant deformation $\hat{G} \times I$ in Theorem
  \ref{CompactGray} induces an automorphism of
  the constant deformation $G \times I$. Since the former is an
  isomorphism of deformations of contact groupoids between
  the constant deformation of $(\hat{G},\hat{H})$ and $(\hat{G} \times
  I, \tilde{H}')$, it follows that the latter is the desired
  isomorphism of deformations of contact groupoids between the
  constant deformation of $(G,H)$ and $(\tilde{G},\tilde{H})$.

  It remains to prove the time-dependent vector field constructed in
  the proof of Theorem \ref{CompactGray} is $\Zz_2 \times \Zz_2$-equivariant. We fix
  notation as in the proof of Theorem \ref{CompactGray}, adding
  hats. We need to prove that the unique solution $\hat{X}_\tau \in
  \Gamma(\hat{H}_\tau)$ to
  \begin{equation}
    \label{eq:17}
    \frac{d\hat{\alpha}_\tau}{d\tau}+\iota_{\hat{X}_\tau}d\hat{\alpha}_\tau=\hat{\mu}_\tau\hat{\alpha}_\tau 
  \end{equation}
  \noindent
  is $\Zz_2 \times \Zz_2$-equivariant. Since the $\Zz_2 \times
  \Zz_2$-action on $\hat{G} \times I$ is given as in equation
  \eqref{eq:18}, this is equivalent to showing that
  \begin{equation}
    \label{eq:19}
     \hat{X}_\tau \underset{\varphi_{ab}}{\sim} \hat{X}_\tau \qquad
     \text{for all } \tau \text{ and for all } (a,b) \in \Zz_2\times\Zz_2.
  \end{equation}
  \noindent
  We observe that $d\varphi_{ab}(\hat{H}_\tau) = \hat{H}_\tau$ for all $\tau$ and for all $(a,b) \in \Zz_2
  \times \Zz_2$. Hence, by
  uniqueness of $\hat{X}_\tau$, in order to prove \eqref{eq:19}, it
  suffices to show that $d\varphi_{ab}(\hat{X}_\tau)$ solves
  equation \eqref{eq:17} for all $\tau$ and for all
  $(a,b) \in \Zz_2 \times \Zz_2$. To this end, let $f_{\tau,ab} \in C^{\infty}(\hat{G})$ be the
  nowhere vanishing function satisfying
  \begin{equation}
    \label{eq:22}
     \varphi_{ab}^*\hat{\alpha}_\tau=f_{\tau,ab}\hat{\alpha}_\tau. 
  \end{equation}
  \noindent
  Since $\hat{G}$ is compact, we can apply Lemma
  \ref{lemma:the_fs_are_constant} to $(\hat{G},\hat{H}_\tau)$ for all
  $\tau$, thus obtaining that
  \begin{equation}
    \label{eq:20}
    f_{\tau,00} = f_{\tau,01} \equiv 1 \qquad  f_{\tau,10} =
    f_{\tau,11} \equiv -1 \qquad \text{for all } \tau.
   \end{equation}
  \noindent
  In particular, $f_{\tau,ab}$
  does not depend on $\tau$ for all $(a,b) \in \Zz_2 \times \Zz_2$. For this reason, we drop the notational
  dependence on $\tau$. Equation \eqref{eq:20} implies that
  $\varphi_{ab}^*\frac{d\alpha_\tau}{d\tau}=f_{ab}\frac{d\alpha_\tau}{d\tau}$
  and $\varphi_{ab}^*d\alpha_\tau = f_{ab}d\alpha_\tau$ for all
  $\tau$ and for all
  $(a,b) \in \Zz_2 \times \Zz_2$. Therefore,
  combining equations \eqref{eq:17}, \eqref{eq:22} and \eqref{eq:20}, we have that
  \begin{equation}
    \label{eq:21}
    \iota_{d\varphi_{ab}(\hat{X}_{\tau})}
  (d\hat{\alpha}_\tau)
  = (\varphi_{ab}^*\hat{\mu}_\tau)\hat{\alpha}_\tau-\frac{d\hat{\alpha}_\tau}{d\tau},
  \end{equation}
  \noindent
  for all $\tau$ and for all $(a,b) \in \Zz_2 \times \Zz_2$.
  Suppose that $\varphi_{ab}^*\hat{\mu}_\tau = \hat{\mu}_\tau$ for all $\tau$ and for all $(a,b) \in \Zz_2 \times
  \Zz_2$. Then equation \eqref{eq:21} implies
  the desired result, since both 
  $d\varphi_{ab}(\hat{X}_{\tau})$ and $\hat{X}_\tau$ are sections of
  $H_\tau$ for all $\tau$.

  It remains to show that $\varphi_{ab}^*\hat{\mu}_\tau =
  \hat{\mu}_\tau$ for all $\tau$ and for all $(a,b) \in \Zz_2 \times
  \Zz_2$. By equation \eqref{eq:15}, for all $\tau$, there exists
  $\hat{\delta}_\tau \in C^{\infty}(\hat{M})$ such that $\hat{\mu}_\tau =
  \hat{s}^*\hat{\delta}_{\tau} =
  \hat{t}^*\hat{\delta}_{\tau}$. Moreover, $\hat{\delta}_\tau$ is
  constant along the orbits of $\hat{G}$ for all $\tau$. If
  $\gamma$ denotes the action of the non-trivial element of $\Zz_2$ on
  $\hat{M}$, then by equation \eqref{eq:23} we have that
  \begin{equation}
    \label{eq:28}
    \varphi^*_{ab}\hat{\mu}_{\tau} =
  \hat{t}^*(\gamma^a)^*\hat{\delta}_\tau =
  \hat{s}^*(\gamma^b)^*\hat{\delta}_\tau
  \end{equation}
  \noindent
  for all $\tau$ and for all $(a,b) \in
  \Zz_2 \times \Zz_2$.
  Using equations \eqref{eq:24} and \eqref{eq:23}, we see that, for all $x \in
  \hat{M}$, $\gamma^c(x)$ lies in the same
  orbit of $x$ for all $c \in \Zz_2$. Hence, since
  $\hat{\delta}_\tau$ is constant along the orbits of $\hat{G}$,
  $(\gamma^c)^*\hat{\delta}_\tau = \hat{\delta}_{\tau}$ for all $\tau$ and for all $c \in \Zz_2$. Therefore,
  equation \eqref{eq:28} implies that $\varphi_{ab}^*\hat{\mu}_\tau =
  \hat{\mu}_{\tau}$ for all $\tau$ and for all
  $(a,b) \in \Zz_2 \times \Zz_2$, as desired. 
\end{proof}

\section{Multiplicative Gray stability at the level of
  objects}\label{sec:examples-&-apps}
This section has three aims: to define Jacobi bundles (see Definition \ref{defn:jac_str_manifolds}), to recall that
they appear on the base manifolds of contact groupoids (see Theorem \ref{LocToGlob}), and to illustrate how Corollary
\ref{cor:stability} can be applied to their deformations (see Theorems \ref{thm:gray_Jacobi} and
\ref{thm:gray_poisson}). Moreover, we prove that Jacobi bundles that are induced by proper
co-orientable contact groupoids `are Poisson' (see
Corollary \ref{cor:proper_jacobi}). The main references are \cite{crainic_salazar,crainic_zhu,dazord,kerbrat,zamb_zhu}.

\subsection{Jacobi bundles}\label{subsec:jac-mfds}

\begin{defn}\label{defn:jac_str_manifolds}
  Let $L \to M$ be a line bundle. A {\bf Jacobi bracket on
    $\boldsymbol{L \to M}$} is a local 
  Lie bracket on  $\Gamma(L)$, i.e., for all 
  $u,v \in \Gamma(L)$, $ \mathrm{supp}\left(\{u,v\}\right) \subset
  \mathrm{supp}(u) \cap \, 
  \mathrm{supp}(v)$. The triple $\jm$ is a {\bf Jacobi bundle}.
\end{defn}

\begin{example}[Contact manifolds]\label{example:contact_manifolds}
  Let $(N,H)$ be a contact manifold. A vector field 
  $X \in \XX(N)$ is {\em Reeb} if 
  $[X,\Gamma(\H)]\subset\Gamma(\H)$. By the Jacobi identity, 
  the subspace of Reeb vector fields is a Lie subalgebra of 
  $\XX(N)$ that we denote by
  $\XX_{\mathrm{Reeb}}(N,\H)$. Moreover, the generalized contact form
  $\alpha_{\mathrm{can}}$ induces an isomorphism of vector spaces
  $\XX_{\mathrm{Reeb}}(N,\H) \cong \Gamma(L)$. Since the Lie bracket of vector 
  fields is local, the Lie algebra structure on 
  $\XX_{\mathrm{Reeb}}(N,\H)$ induces a Jacobi bracket
  $\{\cdot,\cdot\}_H$ on $L \to N$.
\end{example}

\begin{example}[Poisson manifolds]\label{example:Poisson}
  Let $(M,\pi)$ be a {\bf Poisson manifold}, i.e., $\pi \in \XX^2(M)$
  such that
  \begin{equation}
    \label{eq:30}
    \llbracket \pi, \pi \rrbracket =0,
  \end{equation}
  where $\llbracket \cdot, \cdot \rrbracket$ is the 
  Schouten-Nijenhuis bracket. The condition 
  \eqref{eq:30} is equivalent to $\{f,g\}_{\pi}:= \pi(df,dg)$ being a
  Lie bracket being a Lie bracket on $C^{\infty}(M)$. Since 
  $\pi$ is a bivector, $\{\cdot,\cdot\}_\pi$ is a Jacobi bracket on $M \times \Rr \to M$.
\end{example}

We say that a Jacobi bundle $\jm$ is {\bf trivializable} if $L \to M$
is trivializable. A trivialization $\psi
: L \to M \times \Rr$ induces a Jacobi bracket $\{\cdot,\cdot\}$
on $M \times \Rr \to M$. In \cite{kirillov} it is shown that 
there exists a unique pair $(\Lambda,R) \in \XX^2(M) \times \XX(M)$ such that, for all 
$f,g \in C^{\infty}(M)$, 
\begin{equation}\label{eqn:jac_trivial}
  \lbrace f,g\rbrace =\Lambda(df,dg)+fR g -gR f.
  \end{equation}
\noindent
If $\psi'$ is another trivialization of $L$, we let $\Lambda' \in
\XX^2(M)$, $R' \in \XX(M)$ be as in equation \eqref{eqn:jac_trivial}
for the induced Jacobi bracket on $M \times \Rr \to M$.
We identify $\psi' \circ \psi^{-1}$ with a nowhere vanishing $a
\in C^{\infty}(M)$. For any pair $(\Lambda, R) \in \XX^2(M) \times
\XX(M)$, set
\begin{equation}
  \label{eq:29}
  \Lambda^a:=a\Lambda \qquad \text{and} \qquad
  R^a:=aR +\Lambda^\sharp(da).
\end{equation}
\noindent
Then we have that
\begin{equation}
  \label{eq:33}
  (\Lambda',R') = (\Lambda^{a^{-1}}, R^{a^{-1}}).
\end{equation}

\begin{definition}\label{defn:co-oriented_jac}
  Let $\jm$ be a trivializable Jacobi bundle. If $(\Lambda,R) \in
  \XX^2(M) \times \XX(M)$ satisfies equation \eqref{eqn:jac_trivial}
  (for some
  trivialization $\psi$ of $L$) , then we call $(\Lambda,R)$
  a {\bf Jacobi pair} (for $\jm$). The triple $(M,\Lambda,R)$ is a {\bf
    Jacobi manifold}.
\end{definition}

\begin{Rmk}\label{rmk:co-oriented_jac}
  If $(M,\Lambda, R)$ is a Jacobi manifold, then 
  \begin{equation}\label{JacobiPair}
    \llbracket \Lambda,\Lambda \rrbracket =2R\wedge\Lambda \qquad
    \text{and} \qquad \llbracket \Lambda,R \rrbracket=0.
  \end{equation}
  \noindent
  Conversely, any pair $(\Lambda, R) \in \XX^2(M) \times \XX(M)$ 
  satisfying \eqref{JacobiPair} yields a Jacobi bundle 
  $(M, M \times \mathbb{R},\{\cdot,\cdot\})$, where $\{\cdot,\cdot\}$
  is defined by the right hand side of 
  \eqref{eqn:jac_trivial}. 
\end{Rmk}

\begin{example}[Co-orientable contact manifolds]\label{exm:cooriented_contact}
  Let $(N,H)$ be a co-orientable contact manifold. A contact form
  $\alpha$ for $(N,H)$ induces a Jacobi pair for the Jacobi bundle
  $(N, L= TN/H, \{\cdot,\cdot\}_H)$ of Example
  \ref{example:contact_manifolds} as follows. There exists a unique 
  bivector field\footnote{The notation used in this Example should not
  be confused with the notation of equation \eqref{eq:29}. Here we
  merely wish to indicate that the bivector field and the Reeb vector
  field depend on $\alpha$.} $\Lambda^{\alpha} \in \XX^2(N)$ such that 
  $\Lambda^{\alpha}(\alpha,-)= 0$ and 
  $\Lambda^{\alpha}|_{H^* \times H^*} = (d\alpha|_{H \times H})^{-1}$. 
  If $R^{\alpha}$ is the 
  Reeb vector field of $\alpha$, then $(\Lambda^{\alpha},R^{\alpha})$
  is a Jacobi pair for $(N, L, \{\cdot,\cdot\}_H)$.

  If $\alpha'$ is another contact form for $(N,H)$, then there exists
  a nowhere vanishing $a \in C^\infty(M)$ such that $\alpha' =
  a\alpha$. Then we have that
  \begin{equation*}
    (\Lambda^{\alpha'}, R^{\alpha'}) =
     ((\Lambda^{\alpha})^{a^{-1}},(R^{\alpha})^{a^{-1}}),
   \end{equation*}
   \noindent
   (see equations \eqref{eq:29} and \eqref{eq:33}).
\end{example}

\begin{example}[Poisson manifolds as Jacobi manifolds]\label{exm:Poisson_as_Jacobi}
  Every Poisson manifold $(M,\pi)$ is a Jacobi manifold with $R \equiv
  0$. For this reason, we denote a Poisson manifold viewed as a Jacobi
  manifold simply by $(M,\pi)$.
\end{example}

\begin{defn}\label{defn:jac_maps}
  Let $\left(M_j,L_j, \bracket_j\right)$ be a 
  Jacobi bundle for $j=1,2$. A {\bf Jacobi map between 
  $\boldsymbol{\left(M_1,L_1, \bracket_1\right)}$ and 
  $\boldsymbol{\left(M_2, L_2, \bracket_2\right)}$} is a pair 
  $\left(\phi, B\right)$, where $\phi : M_1 \to M_2$ is
  smooth and $B : \phi^*L_2 \to L_1$ is an isomorphism of 
  line bundles called the {\bf bundle component} such that, for all $u,v \in \Gamma(L_2)$,
  $$ B\circ \phi^*\left\{u, v\right\}_2 =  \left\{B \circ \phi^*u, B \circ \phi^*v\right\}_1.$$
  \noindent
  An {\bf isomorphism} between Jacobi bundles is a Jacobi 
  map $\left(\phi, B\right)$ with the property that $\phi$ is 
  a diffeomorphism.
\end{defn}

\begin{remark}\label{rmk:jacobi_map}
  A Jacobi map between the Jacobi manifolds $\left(M_1,\Lambda_1,
    R_1\right)$ and $\left(M_2,\Lambda_2,
    R_2\right)$ is given by a smooth $\phi : M_1 \to M_2$ and a
  nowhere vanishing $a \in C^{\infty}(M_1)$ (playing the role of the
  bundle component), such that
  $$ \phi_*(\Lambda_1^a)=\Lambda_2,\ \text{ and } \
  \phi_*(R_1^a)=R_2, $$
  \noindent
  (see equation \eqref{eq:29}). As in Definition \ref{defn:jac_maps},
  we denote this map by $(\phi,a)$. In particular, if $(\phi,a)$ is a Jacobi isomorphism between Poisson manifolds 
  $(M_1,\pi_1)$, $(M_2,\pi_2)$, then 
  \begin{equation}\label{eq:conf_iso}
    \phi_*(a\pi_1)=\pi_2 \ \text{ and }\ \pi_1^\sharp(da)=0.
  \end{equation}
\end{remark}

\subsection{From contact groupoids to Jacobi bundles}\label{sec:Jacobi_manifolds} 
In this section we discuss Jacobi bundles that are {\it integrable} by
contact groupoids with particular emphasis on the proper and
co-orientable case. We start by recalling one of the main results of \cite{crainic_salazar}.

\begin{Theo}[Theorem 1 in \cite{crainic_salazar}]\label{LocToGlob}
  Let $(G,\H)$ be a contact groupoid over $M$ and let $L :=
  (TG/\H)|_{M}$. There exists a unique Jacobi bracket
  $\{\cdot,\cdot\}$ on $L \to M$ such that $t:G\to M$ is a
  Jacobi map with bundle component $\mathtt{r}:t^*L \to TG/\H$  (see \eqref{eq:right_iso}).
\end{Theo}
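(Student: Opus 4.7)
The strategy is to transport the Jacobi bracket on $\Gamma(TG/H)$ coming from viewing $(G,H)$ as a contact manifold (via Example \ref{example:contact_manifolds}) along the injection $\iota:=\mathtt{r}\circ t^*:\Gamma(L)\to \Gamma(TG/H)$, and then to verify that the image $\iota(\Gamma(L))$ is closed under this bracket, so that a bracket descends to $\Gamma(L)$.

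More concretely, by Example \ref{example:contact_manifolds} the contact manifold $(G,H)$ induces a canonical Jacobi bracket $\{\cdot,\cdot\}_H$ on $L_G:=TG/H\to G$ via the identification $\alpha_{\mathrm{can}}:\XX_{\mathrm{Reeb}}(G,H)\xrightarrow{\cong}\Gamma(L_G)$ and the Lie bracket of Reeb vector fields. The plan is to define the sought bracket on $\Gamma(L)$ by the defining relation
\[
 \iota(\{u,v\}) \;=\; \{\iota(u),\iota(v)\}_H, \qquad u,v\in\Gamma(L).
\]
Once existence of such a $\{u,v\}\in\Gamma(L)$ is established, uniqueness of the bracket (and hence of the whole construction) is immediate from injectivity of $\iota$, which follows from surjectivity of $t:G\to M$ together with $\mathtt{r}$ being a vector bundle isomorphism. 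The Jacobi map condition with bundle component $\mathtt{r}$ then holds by construction.

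The hard part, and essentially the whole content of the theorem, is to show that $\iota(\Gamma(L))$ is closed under $\{\cdot,\cdot\}_H$. Under the Reeb identification, $\iota(\Gamma(L))$ corresponds to the subspace of Reeb vector fields $X\in\XX_{\mathrm{Reeb}}(G,H)$ for which $\alpha_{\mathrm{can}}(X)\in\Gamma(TG/H)$ is right-invariant in the groupoid sense (cf. equations \eqref{eq:right} and \eqref{eq:right_iso}); equivalently, by $s$-transversality of $H$, these are the Reeb vector fields $t$-projectable onto a section of $L$ over $M$. The main step is to prove that the Lie bracket $[X,Y]$ of two such Reeb vector fields is again of the same form. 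This is where multiplicativity of $\alpha_{\mathrm{can}}$ (Proposition \ref{prop:canonical_form}) is essential: differentiating equation \eqref{eq:mult_general} along the flows of $X$ and $Y$ and using \ref{item1}--\ref{item3} forces the bracket to remain compatible with right translation, so that $[X,Y]$ descends to a well-defined section of $L$. I expect this infinitesimal unpacking of multiplicativity to be the principal technical obstacle.

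Once closure is proved, the induced operation $\{\cdot,\cdot\}$ on $\Gamma(L)$ inherits antisymmetry and the Jacobi identity from $\{\cdot,\cdot\}_H$. Locality descends as well: if $u\in\Gamma(L)$ satisfies $\mathrm{supp}(u)\subseteq U\subseteq M$, then $\mathrm{supp}(\iota(u))\subseteq t^{-1}(U)$, and locality of $\{\cdot,\cdot\}_H$ combined with surjectivity of $t$ gives $\mathrm{supp}(\{u,v\})\subseteq \mathrm{supp}(u)\cap\mathrm{supp}(v)$. Thus $\{\cdot,\cdot\}$ is the unique Jacobi bracket on $L\to M$ making $(t,\mathtt{r})$ a Jacobi map, as required.
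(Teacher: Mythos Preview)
The paper does not give its own proof of this statement: it is quoted as Theorem~1 of \cite{crainic_salazar} and used as a black box, so there is nothing in the paper to compare against directly.

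Your strategy is sound in outline and matches the standard way one extracts a Jacobi bracket on the base from a contact groupoid: identify $\Gamma(L)$ with right-invariant sections of $TG/H$ via $\iota$, pass to the corresponding Reeb (contact) vector fields on $G$, and show that this subspace is a Lie subalgebra. Uniqueness and the inheritance of antisymmetry, the Jacobi identity and locality are exactly as you say.

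Two remarks. First, the main step---closure of $\iota(\Gamma(L))$ under $\{\cdot,\cdot\}_H$---is only asserted, not carried out. You correctly identify multiplicativity of $\alpha_{\mathrm{can}}$ (Proposition~\ref{prop:canonical_form}) as the key input, but the actual computation still has to be done; in the co-oriented case this amounts to checking, using equation~\eqref{eq:2} and the right-invariance of $R^{\alpha}$ from Lemma~\ref{lemma:reeb_gpd}, that the bracket of two contact vector fields with right-invariant $\alpha$-value again has right-invariant $\alpha$-value. Second, your parenthetical claim that these are exactly the Reeb vector fields ``$t$-projectable onto a section of $L$ over $M$'' is not obviously correct and is in any case unnecessary: the contact vector field $X$ associated to $\iota(u)$ is in general neither right-invariant nor $t$-related to a vector field on $M$. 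The correct characterisation is simply that $\alpha_{\mathrm{can}}(X)$ lies in $\iota(\Gamma(L))$, i.e., is right-invariant under the identification~\eqref{eq:right_iso}; I would drop the projectability remark.
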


We say that a contact groupoid $(G,H)$ {\bf induces} the Jacobi
bracket $\{\cdot,\cdot\}$ on $L \to M$ of Theorem \ref{LocToGlob},
that it {\bf integrates} $\jm$, and that $\jm$ is {\bf integrable} by
$(G,H)$. We use the same terminology for co-oriented contact groupoids
integrating Jacobi manifolds (see Remark \ref{eq:32} below).

\begin{remark}\label{eq:32}
When $(G,\alpha,F)$ is a co-oriented contact groupoid, 
  the induced Jacobi pair $(\Lambda,R)$ on $(M,M \times \Rr,\{\cdot,\cdot\})$ is given by
  $\Lambda=t_*(\Lambda^\alpha)$, $ R=t_*(R^\alpha)$, 
  where $\Lambda^\alpha, R^\alpha$ are as in 
  Example~\ref{exm:cooriented_contact}. Hence, $(t,1)$ is a Jacobi
  map between $(G,\Lambda^{\alpha},R^{\alpha})$ and $(M, \Lambda,R)$
  -- see Remark \ref{rmk:jacobi_map}.
\end{remark}


\begin{remark}\label{rmk:contact_jacobi}
  Let $\Phi : (G_1,H_1) \to (G_2,H_2)$ be an isomorphism of contact
  groupoids covering $\phi : M_1 \to M_2$. Then $(\phi,B):(M_1,L_1,\{\cdot,\cdot\}_1)\to(M_2,L_2,\{\cdot,\cdot\}_2)$
  is a Jacobi isomorphism, where $B$ is the inverse of $$TG_1/H_1|_{M_1}\to
  \phi^*(TG_2/H_2|_{M_2}),\ \ [X]\mapsto (g,[d_g\Phi(X)]).$$ 
\end{remark}

The following result provides a useful characterization of co-oriented
contact groupoids integrating Poisson manifolds.

\begin{lemma}\label{lemma:constant-F} Let $(G,\alpha, F)$ be a
  co-oriented contact groupoid integrating the Jacobi
  manifold $(M, \Lambda, R)$. If $dF \equiv 0$ then $R \equiv 0$, i.e., $\Lambda$
  is a Poisson bivector. Conversely, if $R \equiv 0$ then $dF \equiv 0$ on $\ker
  ds.$ 
\end{lemma}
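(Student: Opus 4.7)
The plan is to exploit equation \eqref{eq:16} of Lemma \ref{lemma:reeb_gpd}, namely $R^{\alpha,\mathtt{L}} = FR^\alpha + \Lambda^{\alpha,\sharp}(dF)$, together with the identification $R = t_*R^\alpha$ from Remark \ref{eq:32}.

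For the first implication, if $dF \equiv 0$ then \eqref{eq:16} reduces to $R^{\alpha,\mathtt{L}} = FR^\alpha$. Since $R^{\alpha,\mathtt{L}} \in \Gamma(\ker dt)$ by left-invariance and $F$ is nowhere vanishing, $R^\alpha \in \Gamma(\ker dt)$. Hence $R = t_*R^\alpha \equiv 0$, and by \eqref{JacobiPair} $\Lambda$ is Poisson.

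For the converse, assume $R \equiv 0$. I would proceed in two steps. First, show $dF$ vanishes on $T_{1_x}G$ for every $x \in M$. Evaluating \eqref{eq:8} at $1_x$ and using $dt(R^\alpha_{1_x}) = R(x) = 0$ gives $R^{\alpha,\mathtt{L}}_{1_x} = R^\alpha_{1_x}$. Combining with the second identity in \eqref{eq:final2} and the Reeb equation $d\alpha(R^\alpha, -) = 0$ yields, for any $X \in T_{1_x}G$,
\[
-dF_{1_x}(X) = d\alpha_{1_x}(R^{\alpha,\mathtt{L}}_{1_x}, X) = d\alpha_{1_x}(R^\alpha_{1_x}, X) = 0,
\]
so in particular $dF$ vanishes on $A = \ker ds|_M$.

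Second, I would extend this to all of $\ker ds$ using multiplicativity of $F$. For $g \in G$ and $X \in \ker ds_g$, right translation yields a unique $X' \in A_{t(g)}$ with $\mathtt{r}_g(X') = X$. Differentiating the identity $F(g'g) = F(g')F(g)$ along any curve in $s^{-1}(t(g))$ through $1_{t(g)}$ tangent to $X'$ gives $dF_g(X) = F(g)\, dF_{1_{t(g)}}(X') = 0$, as required.

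The main subtlety is the computation $R^{\alpha,\mathtt{L}}_{1_x} = R^\alpha_{1_x}$ via \eqref{eq:8}, which hinges on the correction term $dt(R^\alpha_{1_x})$ collapsing precisely under the hypothesis $R \equiv 0$. Once this is in hand, the rest is a direct assembly of formulas already established in Section \ref{sec:co-orient-cont}.
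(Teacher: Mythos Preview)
Your proof is correct and follows essentially the same approach as the paper: both rely on Lemma~\ref{lemma:reeb_gpd} (specifically equations \eqref{eq:16} and \eqref{eq:final2}), the identification $R=t_*R^\alpha$, and the multiplicativity of $F$ to pass from $\ker ds|_M$ to all of $\ker ds$. The only minor difference is organizational: for the forward direction you argue globally via $R^{\alpha,\mathtt{L}}\in\Gamma(\ker dt)$, and for the converse you invoke \eqref{eq:final2} directly at units, whereas the paper first derives the pointwise relation $\Lambda^{\alpha,\sharp}(dF)_x=-R_x$ and then splits $TG|_M$ as $\mathbb{R}\langle R^\alpha\rangle\oplus\ker\alpha$ to reach the same conclusion.
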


\begin{proof}
  Let $R^{\alpha,\mathtt{L}}=FR^\alpha+\Lambda^{\alpha,\sharp}(dF)$ be the left-invariant vector 
  field induced by $R^{\alpha}$ (see Lemma \ref{lemma:reeb_gpd} and equation \eqref{eq:16}). Fix $x \in M$. Then $F(x) = 1$ and
  $R^{\alpha,\mathtt{L}}_x=R^\alpha_x -
  dt(R^{\alpha}_x)$ -- see equation \eqref{eq:8}. Hence, 
  $\Lambda^{\alpha,\sharp}(dF)_x = -dt(R^\alpha_x)$ is tangent to
  $M$. By Remark \ref{eq:32}, it follows that 
  \begin{equation}\label{eq:equiv}
  \Lambda^{\alpha,\sharp}(dF)_x=-R_x.
  \end{equation}
  \noindent
  
  By equation \eqref{eq:equiv}, if $dF \equiv 0$ then $R \equiv 0$. 
  Conversely, suppose that $R \equiv 0$. Then, by equation \eqref{eq:equiv}, 
  $\Lambda^{\alpha,\sharp}(dF)|_M \equiv 0$. This means that $dF
  \equiv 0$ 
  on $(\ker\alpha)|_M$. On the other hand, we have that $0 = d\alpha(\Lambda^{\alpha,\sharp}(dF), R^{\alpha}) =
  -dF(R^{\alpha})$.
  Since $TG|_M=\Rr \langle R^\alpha|_M \rangle \oplus\ker\alpha|_M$ (see
  equation \eqref{eq:6}), 
  then $dF$ is zero on $TG|_M$ and, in particular, on 
  $\ker ds|_M$. Since $F: G \to \Rr^*$ is a Lie groupoid homomorphism, it follows that 
  $dF \equiv 0$ on $\ker ds$.
\end{proof}

To conclude this section, we prove the analogs of Proposition
\ref{prop:proper_co-orientable} and Corollary
\ref{cor:proper_co-orientable} for Jacobi bundles that are integrable
by proper co-orientable contact groupoids. First, we recall the following facts.
\begin{enumerate}[label=(\alph*), ref=(\alph*),leftmargin=*]
\item \label{item:1} The base of a Jacobi bundle comes endowed with a {\em singular}
  foliation (see \cite{dual_pairs,kirillov}). In the case of a Poisson
  manifold, this coincides with the symplectic foliation. When $\jm$
  is integrable by $(G,H)$, the leaves of the foliation coincide with
  the connected components of the orbits of $G$.
\item \label{item:2} If $(M,\pi)$ is a 
  Poisson manifold, a function $f\in C^\infty(M)$ is a 
  {\bf Casimir} of $(M,\pi)$ if $\pi^{\sharp}(df) \equiv 0$ (cf. the second condition in equation
  \eqref{eq:conf_iso}).   If $(M,\pi)$ is integrable by a co-oriented contact groupoid, Casimirs of 
  $(M,\pi)$ are precisely smooth functions that are constant 
  along connected components of the orbits of the groupoid. 
\end{enumerate}

\begin{Prop}\label{prop:evenDimLeaves}
  Let $(M,\Lambda, R)$ be a
  Jacobi manifold integrable by a co-oriented contact groupoid
  $(G,\alpha, F)$. Suppose that the Reeb cocycle $r$ of $(G,\alpha, F)$ is
  a coboundary. Then
  \begin{enumerate}[leftmargin=*]
  \item \label{item:5} all leaves of the foliation of $(M,\Lambda,R)$
  are even dimensional.
  \end{enumerate}
  Moreover, for all $\kappa \in C^{\infty}(M)$ satisfying
  equation \eqref{eq:reeb_trivial}, 
  \begin{enumerate}[leftmargin=*, start = 2]
  \item\label{part_2} $\Lambda^{e^{-\kappa}}=e^{-\kappa}\Lambda$ is a
    Poisson bivector, 
  \item\label{propty_3} if $\sigma=\mathrm{sgn}(F)$, then $(G,e^{t^*\kappa}\alpha,\sigma)$ integrates the Poisson manifold $(M,e^{-\kappa}\Lambda)$, and
 \item\label{propty_4} if $R \equiv 0$ then $\kappa$ is a Casimir of the Poisson
  manifold $(M,\Lambda)$.
    \end{enumerate}
\end{Prop}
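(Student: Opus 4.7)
My plan is to exploit Proposition \ref{prop:proper_co-orientable} to trade the original co-oriented contact groupoid $(G,\alpha,F)$ for a new one in which the conformal factor is locally constant, then read off everything from Lemma \ref{lemma:constant-F} and the change-of-trivialization formula \eqref{eq:33} for Jacobi pairs. Applying Proposition \ref{prop:proper_co-orientable} to $\kappa$ produces the multiplicative contact form $(\alpha':=e^{t^*\kappa}\alpha,\sigma)$ for $(G, H = \ker\alpha)$, giving a new co-oriented contact groupoid $(G,\alpha',\sigma)$ that integrates some Jacobi pair $(\Lambda',R')$ on $M$. Since $\sigma:G\to\{\pm 1\}$ is a Lie groupoid homomorphism, it is locally constant and $d\sigma\equiv 0$ on all of $G$; Lemma \ref{lemma:constant-F} therefore forces $R'\equiv 0$, so $(G,\alpha',\sigma)$ in fact integrates a Poisson manifold $(M,\Lambda')$.

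Next I would identify $\Lambda'$ explicitly. The two multiplicative contact forms $(\alpha,F)$ and $(\alpha',\sigma)$ correspond, via the dictionary of Section \ref{sec:co-orient-cont}, to the trivializations $\psi$ and $\psi' = e^\kappa\psi$ of $L\to M$, i.e., to the transition function $a = e^\kappa \in C^\infty(M)$. Hence equation \eqref{eq:33} applies on $M$ and yields
\begin{equation*}
(\Lambda',R') = \bigl(\Lambda^{e^{-\kappa}},\,R^{e^{-\kappa}}\bigr) = \bigl(e^{-\kappa}\Lambda,\; e^{-\kappa}R + \Lambda^\sharp(de^{-\kappa})\bigr).
\end{equation*}
Combined with the previous paragraph's conclusion $R'\equiv 0$, this simultaneously establishes parts \ref{part_2} and \ref{propty_3} and extracts the identity $R = \Lambda^\sharp(d\kappa)$.

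Parts \ref{item:5} and \ref{propty_4} then follow quickly. For \ref{item:5}, the foliation of the Jacobi manifold $(M,\Lambda,R)$ depends only on the Jacobi bracket on $L\to M$ (item \ref{item:1}), which coincides with the bracket of $(M,e^{-\kappa}\Lambda)$ viewed as a Jacobi manifold through Example \ref{exm:Poisson_as_Jacobi}; applying item \ref{item:1} once more, the leaves of this foliation coincide with the symplectic leaves of $(M,e^{-\kappa}\Lambda)$, hence are even-dimensional. For \ref{propty_4}, substituting $R\equiv 0$ into the identity $R = \Lambda^\sharp(d\kappa)$ gives $\Lambda^\sharp(d\kappa)\equiv 0$, i.e., $\kappa$ is a Casimir of $(M,\Lambda)$ by item \ref{item:2}.

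The main bookkeeping point I expect to have to check carefully is the identification of the transformation $(\alpha,F)\rightsquigarrow(e^{t^*\kappa}\alpha,\sigma)$ of multiplicative contact forms with the change of trivialization of $L\to M$ by $a=e^\kappa$, so that equation \eqref{eq:33} (and not \eqref{eq:29} with the wrong direction of $a$) controls the induced transformation of Jacobi pairs on $M$. This is the one place where unwinding the correspondence of Section \ref{sec:co-orient-cont} between trivializations $\psi$ of $L\to M$ and multiplicative contact forms on $G$ is essential, and it must be done correctly to obtain $R^{e^{-\kappa}}$ on the nose, since everything else is a formal consequence.
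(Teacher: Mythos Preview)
Your proposal is correct and follows essentially the same approach as the paper: both apply Proposition~\ref{prop:proper_co-orientable} to replace $(\alpha,F)$ by $(e^{t^*\kappa}\alpha,\sigma)$, invoke Lemma~\ref{lemma:constant-F} to see that the new groupoid integrates a Poisson manifold, and then identify that Poisson bivector as $e^{-\kappa}\Lambda$.

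There are two minor differences worth noting. First, to identify the induced bivector the paper pushes forward directly via Remark~\ref{eq:32} and the contact-level relation $\Lambda^{e^{t^*\kappa}\alpha}=e^{-t^*\kappa}\Lambda^\alpha$ from Example~\ref{exm:cooriented_contact}, whereas you route everything through the base-level change-of-trivialization formula~\eqref{eq:33}; these are equivalent, and your bookkeeping with $a=e^\kappa$ is correct. Second, for part~\ref{propty_4} your argument is actually cleaner than the paper's: the paper appeals to the \emph{converse} direction of Lemma~\ref{lemma:constant-F} (that $R\equiv 0$ forces $dF\equiv 0$ on $\ker ds$) and then argues on the groupoid that $\kappa$ is constant along orbits, whereas you extract the identity $R=\Lambda^\sharp(d\kappa)$ directly from $R^{e^{-\kappa}}=0$ and conclude immediately. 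Both are valid; yours avoids going back up to the groupoid.
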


\begin{proof}
  Set $H:=\ker \alpha$. By Proposition
  \ref{prop:proper_co-orientable}, for any $\kappa \in
  C^{\infty}(M)$ satisfying equation \eqref{eq:reeb_trivial}, $(e^{t^*\kappa}\alpha,\sigma)$ is a
  multiplicative contact form for $(G,H)$. Hence, by Lemma
  \ref{lemma:constant-F}, the Jacobi bracket
  on $M$ induced by $(G,e^{t^*\kappa}\alpha,\sigma)$ is, in 
  fact, a Poisson structure. By Remarks ~\ref{rmk:jacobi_map} and \ref{eq:32}, the corresponding Poisson bivector is given by
  $$t_*(\Lambda^{e^{t^*\kappa}\alpha})=t_*(e^{-t^*\kappa}\Lambda^\alpha)=e^{-\kappa} t_*(\Lambda^\alpha)=e^{-\kappa}\Lambda.$$ 
  \noindent
  This proves parts \ref{part_2} and \ref{propty_3}. Moreover, by
  \ref{item:1}, the orbits of $G$ are even
  dimensional and so are the leaves of $(M,\Lambda,R)$. This shows
  part \ref{item:5}. For part \ref{propty_4}, 
if $R \equiv 0$, then by Lemma \ref{lemma:constant-F}, 
  $F=\sigma e^r = \sigma e^{s^*\kappa-t^*\kappa}$ is constant along the connected 
  components of the source fibers. Hence, since $\sigma$ is locally constant, $\kappa$ is constant 
  on the connected components of the orbits $S_x=t(s^{-1}(x))\subset M$. 
  Hence, by \ref{item:2} above, $\kappa$ is a Casimir of $\Lambda$.  
\end{proof}

Since the differentiable cohomology of proper Lie groupoids vanishes in
all positive degrees (see 
\cite[Proposition $1$]{crainic}), Proposition \ref{prop:evenDimLeaves} immediately implies the
following result. 

\begin{corollary}\label{cor:proper_jacobi}
  Let $\jm$ be a trivializable Jacobi bundle that is integrable by a proper co-orientable contact
  groupoid $(G,H)$. Then there exists a Poisson bivector $\pi \in
  \XX^2(M)$ such that $(\pi,0)$ is a Jacobi pair for $\jm$.
\end{corollary}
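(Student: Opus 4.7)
The plan is to reduce Corollary \ref{cor:proper_jacobi} to Proposition \ref{prop:evenDimLeaves} in exactly the same way that Corollary \ref{cor:proper_co-orientable} was reduced to Proposition \ref{prop:proper_co-orientable}. The only input beyond Proposition \ref{prop:evenDimLeaves} is the vanishing of the differentiable cohomology of a proper Lie groupoid in positive degrees, cited from \cite[Proposition 1]{crainic}.

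First, since $(G,H)$ is co-orientable, choose any trivialization of $L$. By Proposition \ref{prop:canonical_form} this yields a multiplicative contact form $(\alpha, F)$, and hence a co-oriented contact groupoid $(G,\alpha,F)$ integrating the Jacobi bundle $\jm$. Let $r = \ln|F|$ be the associated Reeb cocycle (equation \eqref{eqn:1000}), which is a 1-cocycle in the differentiable cohomology of $G$. Let $(\Lambda,R)$ denote the Jacobi pair for $\jm$ obtained from the chosen trivialization (via Remark \ref{eq:32}, $\Lambda = t_*\Lambda^{\alpha}$ and $R = t_*R^{\alpha}$).

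Next, I would invoke properness of $G$ together with \cite[Proposition 1]{crainic} to conclude that the differentiable cohomology of $G$ vanishes in all positive degrees. In particular, $r$ is a coboundary, so there exists $\kappa \in C^{\infty}(M)$ with $r = s^*\kappa - t^*\kappa$, i.e., equation \eqref{eq:reeb_trivial} holds.

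With $\kappa$ in hand, I would apply Proposition \ref{prop:evenDimLeaves} directly. Part \ref{part_2} gives that $\pi := e^{-\kappa}\Lambda \in \XX^2(M)$ is a Poisson bivector, and part \ref{propty_3} gives that $(G, e^{t^*\kappa}\alpha, \sigma)$ is a co-oriented contact groupoid integrating the Poisson manifold $(M,\pi)$, where $\sigma = \mathrm{sgn}(F)$. By Lemma \ref{lemma:choice_mult_contact_form}, passing from $(\alpha,F)$ to $(e^{t^*\kappa}\alpha,\sigma)$ corresponds precisely to changing the trivialization of $L$ by the nowhere-vanishing function $e^{\kappa}$. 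Since the underlying Jacobi bracket on $L$ is intrinsic and independent of the trivialization, the Jacobi pair read off from this new trivialization is $(\pi,0)$ (the Poisson manifold $(M,\pi)$ viewed as a Jacobi manifold, as in Example \ref{exm:Poisson_as_Jacobi}). This exhibits $(\pi,0)$ as a Jacobi pair for $\jm$, as required.

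There is essentially no obstacle here once Proposition \ref{prop:evenDimLeaves} and the Crainic vanishing theorem are available; the only point worth verifying carefully is the identification between the change of multiplicative contact form from $(\alpha,F)$ to $(e^{t^*\kappa}\alpha,\sigma)$ and the change of trivialization of $L$ by $e^\kappa$, which is handled by Lemma \ref{lemma:choice_mult_contact_form} together with the transformation rule \eqref{eq:33}.
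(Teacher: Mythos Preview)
Your proof is correct and follows essentially the same approach as the paper: invoke \cite[Proposition 1]{crainic} to trivialize the Reeb cocycle, then apply Proposition~\ref{prop:evenDimLeaves}. You spell out in more detail why $(\pi,0)$ is a Jacobi pair for the \emph{original} bundle $\jm$ (via Lemma~\ref{lemma:choice_mult_contact_form} and the transformation rule~\eqref{eq:33}), which the paper leaves implicit, but the argument is the same.
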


\subsection{Stability results for integrable deformations of Jacobi bundles and Poisson structures}\label{sec:at-level-objects-1}
By Theorem \ref{LocToGlob}, given a deformation 
$(\tilde{G},\tilde{H}) = \{(G_\tau,H_\tau)\}$ of a contact groupoid
$(G,H)$, $(G_\tau,H_\tau)$ induces a Jacobi bundle
$(M,L_\tau,\{\cdot,\cdot\}_\tau)$ for each $\tau$.
This motivates introducing the following notion.

\begin{defn}\label{defn:deformation_objects}
  An {\bf integrable deformation of a Jacobi bundle
    $\boldsymbol{\jm}$} is
  \begin{itemize}[leftmargin=*]
  \item a contact groupoid $(G,H)$ integrating $\jm$, and
  \item a deformation $(\tilde{G},\tilde{H}) = \{(G_{\tau},H_\tau)\}$
    of $(G,H)$.
  \end{itemize}
  We use the notation $\{(M,L_\tau,\{\cdot,\cdot\}_\tau)\}$, where,
  for each $\tau$, $(G_\tau,H_\tau)$ induces the Jacobi bundle $(M,L_\tau,\{\cdot,\cdot\}_\tau)$.
\end{defn}

Let $(G,H)$ be a co-orientable contact groupoid
integrating a Jacobi bundle $\jm$ and let $(\tilde{G},\tilde{H}) =
\{(G_{\tau},H_\tau)\}$ be a {\em proper} deformation of $(G,H)$. By Corollary
\ref{cor:existence_form} and Lemma \ref{lemma:constant-F}, there exists a `smooth 1-parameter family'
of Poisson bivectors $\{\pi_\tau\}$ on $M$ such that, for all $\tau$,
$(\pi_\tau,0)$ is a Jacobi pair for
$(M,L_\tau,\{\cdot,\cdot\}_\tau)$. This motivates introducing the
following notion.

\begin{defn}\label{defn:deformation_objects_poisson}
  A {\bf contact integrable deformation of a Poisson manifold
    $\boldsymbol{(M,\pi)}$} is
  \begin{itemize}[leftmargin=*]
  \item a co-oriented contact groupoid $(G,\alpha,F)$ integrating
    $(M,\pi)$, and
  \item a deformation $(\tilde{G},\tilde{\alpha}, \tilde{F}) =
    \{(G_\tau,\alpha_\tau,F_\tau)\}$ of $(G,\alpha,F)$ such that, for
    all $\tau$, $(G_\tau,\alpha_\tau,F_\tau)$ induces a Poisson
    bivector $\pi_\tau$ on $M$ and $\pi = \pi_0$.
      \end{itemize}
 We use the notation $\{(M,\pi_\tau)\}$.
\end{defn}

In Definitions \ref{defn:deformation_objects} --
\ref{defn:deformation_objects_poisson}, we say that (a contact) an 
integrable deformation of a Jacobi bundle (respectively Poisson
manifold) is {\bf compact} if the underlying contact groupoid being deformed
  is compact. In view of Remark \ref{rmk:contact_jacobi}, Corollary \ref{cor:stability} implies the following stability
result for compact integrable deformations of Jacobi bundles. 


\begin{Theo}[Jacobi version]\label{thm:gray_Jacobi}
  Let $M$ be a connected manifold. Given any compact integrable deformation $\{(M,L_\tau,\{\cdot ,
  \cdot \}_\tau)\}$ of a Jacobi bundle $\jm$, there exists
   a smooth $1$-parameter family of Jacobi isomorphisms $(\phi_\tau, B_\tau):
   (M,L_\tau,\{\cdot,\cdot\}_\tau) \to \jm$ starting at the identity.
\end{Theo}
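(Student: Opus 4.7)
The plan is to apply the multiplicative Gray stability result (Corollary \ref{cor:stability}) to the compact contact groupoid $(G,H)$ underlying the integrable deformation, and then transport this triviality down to the base using the fact that isomorphisms of contact groupoids induce Jacobi isomorphisms of the induced Jacobi bundles (Remark \ref{rmk:contact_jacobi}). In this sense, the statement should be a fairly direct corollary of Corollary \ref{cor:stability} and the functoriality of the assignment of Theorem \ref{LocToGlob}.

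First, I would unpack Definition \ref{defn:deformation_objects}: the given data is a compact contact groupoid $(G,H)$ integrating $\jm$ together with a deformation $(\tilde{G},\tilde{H}) = \{(G_\tau,H_\tau)\}$ of $(G,H)$, with each $(G_\tau,H_\tau)$ integrating $(M,L_\tau,\{\cdot,\cdot\}_\tau)$. Since $M$ is connected and $G$ is compact, Corollary \ref{cor:stability} applies and yields that $(\tilde{G},\tilde{H})$ is trivial; that is, there is an isomorphism of deformations of contact groupoids $\tilde{\Phi} = \{\Phi_\tau\}$ between the constant deformation of $(G,H)$ and $(\tilde{G},\tilde{H})$, with each $\Phi_\tau : (G,H) \to (G_\tau,H_\tau)$ a contact groupoid isomorphism and $\Phi_0 = \mathrm{id}$.

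Next, each $\Phi_\tau$ covers a diffeomorphism $\psi_\tau : M \to M$ with $\psi_0 = \mathrm{id}$. By Remark \ref{rmk:contact_jacobi}, this produces a Jacobi isomorphism $(\psi_\tau, C_\tau) : \jm \to (M,L_\tau,\{\cdot,\cdot\}_\tau)$, where $C_\tau$ is the inverse of the bundle map induced by $d\Phi_\tau$ on $TG/H|_M \to \psi_\tau^*(TG_\tau/H_\tau|_M)$. Taking inverses then gives the desired Jacobi isomorphisms $(\phi_\tau, B_\tau) := (\psi_\tau^{-1}, C_\tau^{-1}) : (M,L_\tau,\{\cdot,\cdot\}_\tau) \to \jm$, with $(\phi_0,B_0) = (\mathrm{id},\mathrm{id})$ because $\Phi_0 = \mathrm{id}$.

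Finally, smoothness in $\tau$ of the family $\{(\phi_\tau, B_\tau)\}$ follows from the fact that $\tilde{\Phi}: G \times I \to \tilde{G}$ is smooth as a map of manifolds (it is a Lie groupoid isomorphism by Definition \ref{defn:equivalence_deformations}, and hence in particular smooth), so that the induced maps on bases and on the line bundles $L_\tau$ vary smoothly with $\tau$. I do not expect any real obstacle: the only points requiring mild care are the direction of the isomorphism (which is handled by inversion) and the verification that the line bundle component identified in Remark \ref{rmk:contact_jacobi} depends smoothly on $\tau$, which is immediate from the smoothness of $\tilde{\Phi}$.
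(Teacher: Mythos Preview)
Your proposal is correct and matches the paper's approach exactly: the paper presents this theorem as a direct consequence of Corollary \ref{cor:stability} together with Remark \ref{rmk:contact_jacobi}, without a separate written proof. Your elaboration of how these two ingredients combine (unpacking Definition \ref{defn:deformation_objects}, applying stability to obtain $\{\Phi_\tau\}$, descending to the base via Remark \ref{rmk:contact_jacobi}, and inverting) is precisely what the paper intends.
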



Corollary \ref{cor:stability} also provides the following
characterization of compact contact integrable deformations of Poisson manifolds.

\begin{Theo}[Poisson version]\label{thm:gray_poisson}
  Given a compact contact integrable deformation $\{(M,\pi_\tau)\}$ of a connected
  Poisson manifold $(M,\pi)$, there exist
  a diffeotopy $\{\phi_{\tau}\} \subset \mathrm{Diff}(M)$ and a smooth
  $1$-parameter family $\{a_{\tau}\}$ of positive Casimirs of
  $(M,\pi)$ with $a_0 \equiv 1$, 
  such that
  \begin{equation}
    \label{eq:11}
    (\phi_{\tau})_* \pi_\tau =
    a_{\tau}\pi \qquad \text{for all } \tau.
  \end{equation}
\end{Theo}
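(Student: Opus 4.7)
The plan is to apply Corollary \ref{cor:stability} to the contact groupoid deformation underlying the given contact integrable deformation, translate the resulting contactomorphisms into Jacobi isomorphisms via Remark \ref{rmk:contact_jacobi}, and then use Remark \ref{rmk:jacobi_map} together with the Poisson condition $R_\tau \equiv 0$ to read off the conclusion.

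To carry this out, I would first consider the underlying deformation $(\tilde{G}, \tilde{H} = \ker\tilde{\alpha}) = \{(G_\tau, H_\tau)\}$ of the compact contact groupoid $(G, H = \ker\alpha)$ over the connected base $M$. By Corollary \ref{cor:stability}, this deformation is trivial, yielding a smooth $1$-parameter family of contact groupoid isomorphisms $\tilde{\Phi}_\tau : (G, H) \to (G_\tau, H_\tau)$ with $\tilde{\Phi}_0 = \mathrm{id}$. Each $\tilde{\Phi}_\tau$ covers a diffeomorphism $\tilde{\phi}_\tau \in \mathrm{Diff}(M)$ starting at the identity, so $\{\tilde{\phi}_\tau\}$ is automatically a diffeotopy. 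By Remark \ref{rmk:contact_jacobi}, each $\tilde{\Phi}_\tau$ then induces a Jacobi isomorphism $(\tilde{\phi}_\tau, B_\tau) : (M, L, \{\cdot,\cdot\}) \to (M, L_\tau, \{\cdot,\cdot\}_\tau)$; using the trivializations of $L$ and $L_\tau$ induced by $\alpha$ and $\alpha_\tau$, the bundle component $B_\tau$ is encoded by a nowhere vanishing $b_\tau \in C^\infty(M)$ satisfying $b_0 \equiv 1$.

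Since $(\tilde{\phi}_\tau, b_\tau)$ is a Jacobi isomorphism between the Poisson manifolds $(M, \pi)$ and $(M, \pi_\tau)$ (recall $R = R_\tau \equiv 0$), Remark \ref{rmk:jacobi_map} gives
\[
\tilde{\phi}_{\tau, *}(b_\tau \pi) = \pi_\tau \qquad \text{and} \qquad \pi^{\sharp}(db_\tau) = 0.
\]
The second identity is precisely the statement that $b_\tau$ is a Casimir of $(M, \pi)$; smoothness of $b_\tau$ in $\tau$, the initial value $b_0 \equiv 1$, the nowhere-vanishing of $b_\tau$, and connectedness of $I$ together force $b_\tau > 0$ for all $\tau \in I$. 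Setting $\phi_\tau := \tilde{\phi}_\tau^{-1}$ and $a_\tau := b_\tau$, and applying $(\tilde{\phi}_\tau^{-1})_*$ to the first identity, one obtains $(\phi_\tau)_* \pi_\tau = a_\tau \pi$, as required.

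The substance of the argument is Corollary \ref{cor:stability}; everything else is a careful unwinding of the Jacobi data induced by the contact groupoid automorphisms. The essential simplification over the general Jacobi version (Theorem \ref{thm:gray_Jacobi}) is that when $R = R_\tau = 0$ the bundle component of the induced Jacobi isomorphism is automatically a Casimir, which is exactly what produces the positive Casimir $a_\tau$ in the conclusion. I do not anticipate any genuine obstacle beyond correctly tracking the push-forward conventions.
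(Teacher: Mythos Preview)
Your proposal is correct and follows essentially the same route as the paper: apply Corollary \ref{cor:stability} to trivialize the underlying contact groupoid deformation, use Remark \ref{rmk:contact_jacobi} to obtain Jacobi isomorphisms on $M$, and then read off the Poisson conditions via Remark \ref{rmk:jacobi_map} (the paper packages this last step as equation \eqref{eq:conf_iso}) before inverting $\tilde{\phi}_\tau$. The only minor difference is that you supply an explicit continuity argument for the positivity of $a_\tau$ (using $b_0\equiv 1$, nowhere-vanishing, and connectedness of $M\times I$), whereas the paper simply asserts positivity.
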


\begin{proof}
  Let $(\tilde{G},\tilde{\alpha},\tilde{F}) =
  \{(G_\tau,\alpha_\tau,F_\tau)\}$ be a deformation of co-orientable 
  groupoids inducing $\{(M,\pi_\tau)\}$ such that $G = G_0$ is
  compact. Set $\tilde{H} := \ker \tilde{\alpha}$ and $H := \ker
  \alpha_0$. By
  Corollary \ref{cor:stability}, there exists an isomorphism
  $\tilde{\Phi} = \{\Phi_\tau\}$ between $(\tilde{G},\tilde{H})$ and
  the constant deformation of $(G,H)$. Hence, by
  Remark \ref{rmk:contact_jacobi}, there
  exist a diffeotopy $\{\phi_{\tau}^{-1}\}$ of $M$ and a smooth
  1-parameter family $\{a_\tau\}$ of positive functions on $M$ with
  $a_0 \equiv 1$, such that $(\phi^{-1}_\tau,a_\tau)$
  is a Jacobi isomorphism between $(M,\pi)$ and
  $(M,\pi_\tau)$ for all $\tau$. Therefore, by equation \eqref{eq:conf_iso},
  $$ (\phi_\tau^{-1})_*(a_\tau \pi) = \pi_\tau \qquad \text{and}
  \qquad \pi^{\sharp}(da_\tau) = 0.$$
  \noindent
  The result follows by applying $(\phi_\tau)_*$ to both sides of the
  first equation and by observing that the second is precisely the
  condition that $a_\tau$ be a Casimir of $\pi$.
\end{proof}

\section{Three families of examples}\label{sec:three_examples}
In this section we give three families of examples of compact contact
groupoids and of the Jacobi bundles that they induce. In each case, we
observe how Theorems
\ref{thm:gray_Jacobi} and \ref{thm:gray_poisson} can be applied (see Remarks \ref{rmk:deformation_zero}, \ref{rmk:ionut}
and \ref{rmk:defo_pmcts}).


\subsection{First jet bundles and integral projective lattices}\label{sec:first-jet-bundles}
Let $p: L \to M$ be a line bundle. The 
{\bf first jet bundle of $\boldsymbol{L}$} is the vector 
bundle $\mathrm{pr} : J^1L \to M$, where, for $x \in M$,
$$ J^1L|_x := \{j^1_xu \mid u \in \Gamma_{\mathrm{loc}}(L) \}. $$
\noindent
There is a canonical contact structure $H_{\mathrm{can}}$ on 
$J^1L$ that is the kernel of the
{\bf Cartan contact form} 
$\alpha_{\mathrm{can}} \in \Omega^1(J^1L; \mathrm{pr}^*L)$. The latter
is given by
$$ \alpha_{\mathrm{can},j^1_xu}:= d_{j^1_xu}(\mathrm{ev} - u \circ \mathrm{pr}) : T_{j^1_xu} J^1L \to L_x, $$
\noindent
where $\mathrm{ev} : J^1L \to L$ is the map $j^1_x u \mapsto u(x)$, and $\ker dp$ is 
canonically identified with $p^*L$. In fact, $(J^1L,H_{\mathrm{can}})$
is a contact groupoid. This is because any vector bundle $E \to M$ is a Lie
groupoid and, in this case, a multiplicative distribution is a vector
subbundle of $TE \to TM$ with base $TM$. Since $J^1L$ is a vector bundle, the contact groupoid
$(J^1L,H_{\mathrm{can}})$ integrates $(M,L,0)$.

While $(J^1L,H_{\mathrm{can}})$ is never compact, in some
special cases there are {\em quotients} that 
are compact (if $M$ is compact), and still integrate $(M,L,0)$. Before
defining what we quotient $J^1L$ by, we recall that 
the Cartan contact form 
$\alpha_{\mathrm{can}}$ enjoys the following property. A section $\vartheta
\in \Gamma_{\mathrm{loc}}(J^1L)$ satisfies
$\vartheta^*\alpha_{\mathrm{can}} = 0 $ if and only if it is {\bf
  holonomic}, i.e., there exists $u \in \Gamma_{\mathrm{loc}}(L)$ such that
$\vartheta = j^1u$.

\begin{defn}[Definition $4.10$ in \cite{sal_sepe}]\label{defn:z-proj-struct}
  An {\bf integral projective lattice on
  $\boldsymbol{L\to M}$} is a full rank lattice $\Sigma\subset J^1L$ such 
  that any local section of $\Sigma \to M$ is holonomic.
\end{defn}

\begin{Rmk}\label{rmk:integral_projective_structures}
Integral projective lattices arise naturally when considering 
{\em integral projective structures}
(see \cite[Definition $4.12$]{sal_sepe}), which are special cases of real projective structures 
(see, e.g., \cite{goldman} and references therein). There is a $1-1$ correspondence between integral projective 
lattices on line bundles and integral projective structures (see \cite[Proposition $4.14$]{sal_sepe}). Moreover, 
proper contact groupoids 
and integral projective lattices/structures are related in a way that
is analogous to the relation between proper symplectic groupoids and 
integral affine structures (see 
\cite[Sections $4.2$ and $4.3$]{PMCT} and 
\cite[Theorem $1.0.1$]{PMCT2}). We will explore this relation 
in a separate paper.
\end{Rmk}

If $M$ is compact, an integral projective lattice $\Sigma$ on $L \to M$
yields a compact integration of $(M,L,0)$ as
follows. The fiberwise action 
of $\Sigma$ on $J^1L$ by translations is free and proper. Its
orbit space is a bundle of tori $\mathsf{pr}: J^1L/\Sigma \to M$ 
and the quotient map $Q : J^1L \to J^1L/\Sigma$ is a covering 
map satisfying $\mathsf{pr} \circ Q = \mathrm{pr}$. Moreover, 
since (local) sections of $\Sigma \to L$ are holonomic, the 
action is by contactomorphisms of $(J^1L,H_{\mathrm{can}})$, 
as the action preserves the Cartan contact form 
$\alpha_{\mathrm{can}}$. Hence, there is a contact structure 
$\bar{H}_{\mathrm{can}}$ on $J^1L/\Sigma$ satisfying 
$dQ (H_{\mathrm{can}}) = \bar{H}_{\mathrm{can}}$. Since the 
fiberwise action of $\Sigma$ is by Lie groupoid isomorphisms, 
$(J^1L/\Sigma, \bar{H}_{\mathrm{can}})$ is a contact groupoid. 
Moreover, since $J^1L/\Sigma \rightrightarrows M$ is a bundle of tori,
$J^1L/\Sigma$ is proper and $(J^1L/\Sigma,
\bar{H}_{\mathrm{can}})$ integrates $(M,L,0)$. Finally, compactness
of $M$ implies that $J^1L/\Sigma$ is compact. 

\begin{Rmk}\label{rmk:deformation_zero}
  If $M$ is compact, the existence of an integral projective lattice on $L \to M$ allows
  us to use Theorem \ref{thm:gray_Jacobi}: any compact integrable
  deformation of $(M,L,0)$ is trivial. While such deformations are
  probably very restrictive (as they come from deformations of contact
  groupoids), the above is a partial stability result for
  deformations of the
  {\em zero} Jacobi bracket in the presence of an integral
  projective lattice.
\end{Rmk}

\subsection{The (oriented) projectivization of the cotangent bundle to a compact Lie group}\label{sec:projectivisation_dual_Lie_algebra}
The {\bf projectivization} of a cotangent bundle $T^*M$ is $ \mathbb{P}(T^*M) := (T^*M \smallsetminus \{0\})/\Rr^*$,
\noindent
where 
$\Rr^*:= \Rr \smallsetminus \{0\}$ acts by fiberwise 
scalar multiplication. There is a well-known contact
structure\footnote{By an abuse of notation we denote the canonical
  contact structures on $J^1L$ and on $\mathbb{P}(T^*M)$ with the same
  symbol. Seeing as it should be clear from the context what the
  ambient manifold is, we trust that this should not cause confusion.}
$H_{\mathrm{can}}$ on
$\mathbb{P}(T^*M)$ that comes from identifying $\mathbb{P}(T^*M)$ with
the manifold of contact elements of $M$ (see \cite[Appendix 4, Section
D]{arnold}). Explicitly, if $p : \mathbb{P}(T^*M) \to M$ denotes
projection, then, for all $\eta \in T^*M \smallsetminus \{0\}$, $ H_{\mathrm{can},[\eta]} = (dp)^{-1}(\ker \eta)$.
Similarly, it is possible to define  a contact structure
$H_{\mathrm{can}}^+$ on the {\bf oriented projectivization} of $T^*M$,
$ \mathbb{S}(T^*M):= (T^*M \smallsetminus \{0\})/\Rr^+ $. Moreover,
$(\mathbb{S}(T^*M), H_{\mathrm{can}}^+)$ is co-orientable. One way 
to construct a contact form for
$(\mathbb{S}(T^*M), H^+_{\mathrm{can}})$ is by choosing a 
Riemannian metric $\mathsf{g}$ on $M$. This identifies 
$\mathbb{S}(T^*M)$ with the unit sphere bundle $U(T^*M)$ with 
respect to $\mathsf{g}$. Under this identification, 
$H^+_{\mathrm{can}} = \ker \lambda_{\mathrm{can}}|_{U(T^*M)}$, where
$\lambda_{\mathrm{can}}$ is the Liouville 1-form. \\

There is a well-known multiplicative analog of the above construction (see, e.g., \cite[Example $2.3$]{zamb_zhu}, 
\cite[Example $7.11$]{crainic_zhu} or 
\cite[Example $3.8$]{dual_pairs}). Let $G$ be a Lie group with 
Lie algebra $\gg$. The cotangent bundle $T^*G$ can be seen as 
a Lie groupoid over $\gg^*$: once $T^*G$ is trivialized using right translations, this is the action Lie 
groupoid of the coadjoint action $\gg^* \curvearrowleft G$ 
(see \cite[Example $1.15$]{crainic_fernandes_book} for details 
on action Lie groupoids). The projectivization 
$\mathbb{P}(T^*G)$ (respectively $\mathbb{S}(T^*G)$) 
is a Lie 
groupoid over the projectivization $\mathbb{P}(\gg^*)$ of $\gg^*$ (respectively
the oriented projectivization 
$\mathbb{S}(\gg^*)$ of $\gg^*$), and, using the above identification, 
it is the 
action Lie groupoid associated to the induced action 
$\mathbb{P}(\gg^*) \curvearrowleft G$ (respectively 
$\mathbb{S}(\gg^*) \curvearrowleft G$). Moreover, the contact 
structure $H_{\mathrm{can}}$ (respectively 
$H^+_{\mathrm{can}}$) is 
multiplicative, so that $(\mathbb{P}(T^*G),H_{\mathrm{can}})$ 
(respectively $(\mathbb{S}(T^*G),H^+_{\mathrm{can}})$) is a 
contact groupoid. These Lie groupoids are compact exactly if $G$ is
compact. In this case, it is possible to choose 
a bi-invariant Riemannian metric on $\gg^*$. Using this metric to
identify $\mathbb{S}(T^*G)$ with $U(T^*G)$, we have that
$(\lambda_{\mathrm{can}}|_ {U(T^*G)}, 1)$ is a 
multiplicative contact form for 
$(U(T^*G),H^+_{\mathrm{can}})$ (see 
\cite[Example $2.3$]{zamb_zhu}).

The contact groupoids $(\mathbb{P}(T^*G),H_{\mathrm{can}})$ and
$(\mathbb{S}(T^*G),H^+_{\mathrm{can}})$ integrate the following
families of Jacobi bundles (see \cite[Example 3.8]{dual_pairs}). Let 
$\pi_\mathrm{lin} \in \XX^2(\gg^*)$ denote the 
Kirillov-Kostant-Souriau Poisson bivector on $\gg^*$, i.e., 
for $\xi \in \gg^*$ and $f,g \in C^{\infty}(\gg^*),$
\begin{equation}\label{eqn:kks}
  (\pi_\mathrm{lin})_\xi(d_\xi f,d_{\xi}g):=\langle \xi, [d_\xi f, d_\xi g] \rangle,  
\end{equation}
\noindent
where $d_\xi f, d_\xi g$ are identified with elements of 
$(\gg^*)^* \cong \gg$, $[\cdot,\cdot]$ is the Lie bracket on 
$\gg$, and $\langle \cdot,\cdot \rangle$ is the standard 
pairing between $\gg^*$ and $\gg$. By equation
\eqref{eqn:kks}, the subspace of homogeneous functions of 
degree one on $\gg^* \smallsetminus \{0\}$ is a Lie subalgebra 
of $C^{\infty}(\gg^*)$. Identifying such functions with 
sections of $\mathrm{O}(1) \to \mathbb{P}(\gg^*)$, i.e., the dual of
the tautological line bundle, we obtain a Jacobi bundle 
$(\mathbb{P}(\gg^*),\mathrm{O}(1),\{\cdot,\cdot\})$
(for further details, see 
\cite[Example $2.10$ and Appendix B]{dual_pairs}, and 
\cite[Example $2.10$]{sal_sepe}). Analogously, if $q :
\mathbb{S}(\gg^*) \to \mathbb{P}(\gg^*)$ is the standard double cover,
we obtain a Jacobi bundle $(\mathbb{S}(\gg^*),
q^*\mathrm{O}(1),\{\cdot,\cdot\}_{\mathbb{S}})$. A choice of inner product on
$\mathfrak{g}^*$ identifies $\mathbb{S}(\gg^*)$ with the unit sphere
$U(\gg^*)$ and determines a Jacobi pair for $(\mathbb{S}(\gg^*),
q^*\mathrm{O}(1),\{\cdot,\cdot\}_{\mathbb{S}})$. If, in addition, $G$ is compact, the above inner product can
be chosen to be bi-invariant and the resulting Jacobi pair is of the
form $(\pi_{U(\gg^*)},0)$ for some Poisson bivector $\pi_{U(\gg^*)}$
on $U(\gg^*) \cong \mathbb{S}(\gg^*)$.

\begin{Rmk}\label{rmk:ionut}
  If $G$ is a compact Lie group, we can use Theorem \ref{thm:gray_Jacobi}: any compact integrable deformation of $(\mathbb{P}(\gg^*),
  O(1).\{\cdot,\cdot\})$ (respectively $(\mathbb{S}(\gg^*),
  q^*\mathrm{O}(1),\{\cdot,\cdot\}_{\mathbb{S}})$) is trivial. In fact,
  upon choosing a bi-invariant inner product on $\gg^*$, we can use
  Theorem \ref{thm:gray_poisson}: any compact contact integrable 
  deformation $\{(U(\gg^*),\pi_\tau)\}$ of $(U(\gg^*),
  \pi_{U(\gg^*)})$ satisfies equation \eqref{eq:11}. This last result should be
  compared with \cite[Part (a) of Theorem 1]{Ionut_spheres}. Theorem 1 
  in {\em loc. cit.} is much stronger as it provides a complete
  local description of the moduli space of Poisson structures near $(U(\gg^*),
  \pi_{U(\gg^*)})$, but requires deeper results (Nash-Moser
  techniques), and semisimplicity of $\gg$. 
\end{Rmk}

\subsection{Prequantization of compact symplectic groupoids}\label{sec:prequant_symp_gpd}

A {\bf prequantization} of a symplectic manifold 
$(S,\omega)$ is a principal $S^1$-bundle $p : N \to S$ such 
that there exists a principal connection 
$\alpha \in \Omega^1(N)$ satisfying $d\alpha = p^*\omega$. A 
necessary and sufficient condition for $(S,\omega)$ to admit 
a prequantization is that the cohomology class of $\omega$ be 
{\em integral}, i.e., it lies in the image of the homomorphism 
$H^2(S;\Zz) \to H^2(S;\Rr)$ (see, e.g., 
\cite[Theorems $7.2.4$ and $7.2.5$]{Geiges}). In this case, 
$H:= \ker \alpha$ is a contact structure on $N$. We abuse terminology
and refer to $(N,H)$ as a prequantization of $(S,\omega)$.

In order to discuss the multiplicative analog of the above
construction, we recall that a {\bf symplectic groupoid} is a Lie groupoid 
$S \rightrightarrows M$ endowed with a symplectic form 
$\omega \in \Omega^2(S)$ that is multiplicative with values 
in the trivial representation $M \times \Rr$ (see 
Definition~\ref{defn:multiplicative_distn}). In analogy with Theorem
\ref{LocToGlob}, a symplectic
groupoids induces a unique Poisson structure on its base so that the
target map is Poisson (see \cite[Theorem
1.1]{coste}). Following \cite[Definition $5.1$]{crainic_zhu}, we say that a 
{\bf prequantization} of a symplectic groupoid $(S,\omega)$ 
over $M$ is a Lie groupoid extension of $S$ by the trivial 
$S^1$-bundle over $M$,
\begin{equation*}\label{eq:prequant}
1 \to M \times S^1 \to G \stackrel{p}{\to} S \to 1,
\end{equation*} 
\noindent
such that $p : G \to S$ is a prequantization of $(S,\omega)$ 
with the property that the connection $1$-form 
$\alpha \in \Omega^1(G)$ is multiplicative with values in the 
trivial representation $M \times \Rr$. Setting 
$H:= \ker \alpha$, we have that $(G,H)$ is a co-orientable contact groupoid 
(see \cite[Theorem $3.1$ and Lemma $3.2$]{Weinstein_Xu} and 
the remark after \cite[Definition $5.1$]{crainic_zhu}). Moreover,
$(\alpha,1)$ is a contact form for $(G,H)$. As above, we
refer to $(G,H)$ as a prequantization of $(S,\omega)$.

\begin{Rmk}\label{rmk:prequant_nec_suf}
  In general, given a symplectic
  groupoid $(S,\omega)$, integrality of $\omega$ is only a necessary
  condition for the existence of a prequantization (see \cite[Theorem $3.1$]{Weinstein_Xu} for 
  necessary and sufficient conditions). However, if $S$ is 
  Hausdorff and source simply connected, then integrality of 
  $\omega$ is sufficient (see \cite[Theorem $3$]{crainic_zhu}).
\end{Rmk}

Let $(S,\omega)$ be a symplectic groupoid integrating a Poisson
manifold $(M,\pi)$ and let $(G,H = \ker \alpha)$ be a prequantization
of $(S,\omega)$. Then
$(G,\alpha,1)$ also integrates the Poisson manifold
$(M,\pi)$. Moreover, $G$ is compact if and only if $S$ is compact. 

\begin{Rmk}\label{rmk:defo_pmcts}
  If $(S,\omega)$ is a prequantizable compact symplectic groupoid 
  integrating a Poisson manifold $(M,\pi)$, we can apply Theorem
  \ref{thm:gray_poisson}: any compact contact integrable deformation
  $\{(M,\pi_\tau)\}$ of $(M,\pi)$ satisfies equation
  \eqref{eq:11}. In particular, by Remark \ref{rmk:prequant_nec_suf},
  this result applies to Poisson
  manifolds admitting a compact s-simply connected symplectic
  integration with integral symplectic form. These are examples of {\em Poisson manifolds of compact type} (see
  \cite{PMCT,PMCT2}).
\end{Rmk}

\appendix
\section{$\sigma$-Multiplicative forms}\label{sec:sigma-mult-forms}


Let $\sigma : G \to \{ \pm 1\}$ be a Lie groupoid homomorphism and let $E
\rightrightarrows V$ be a VB-groupoid over $G \rightrightarrows M$
(see \cite{mack} for the definition of 
VB-groupoids). The Lie groupoid homomorphism $\sigma$ can be used to `twist' the VB-groupoid
structure of $E \rightrightarrows V$ over $G \rightrightarrows M$ as
follows. For $e\in E_g$ and 
$(e_1,e_2)\in E_g^\sigma\times_{V_x}E_h^\sigma$,  the $\sigma$-twisted structure maps are
\begin{equation}
  \label{eq:27}
  \begin{split}
     &s_\sigma(e) := \sigma(g)s(e), \qquad t_\sigma(e) := t(e), \qquad u_\sigma := u,\\
   &m_\sigma(e_1,e_2) := m(e_1,\sigma(g)e_2), \qquad i_\sigma(e):=\sigma(g)i(e).
  \end{split}
\end{equation}

Since $E \rightrightarrows V$ is a VB-groupoid over $\gpd$ and since
$\sigma: G \to \{ \pm 1\}$ is a Lie groupoid homomorphism, the following
result holds at once.

\begin{lemma}\label{lemma:aux_gpd}
  The structure maps of equation \eqref{eq:27} define a VB-groupoid
  $E^\sigma\rightrightarrows V$ over $\gpd$. Moreover, if $\Phi:E_1\to
  E_2$ is a fiberwise linear groupoid homomorphism covering the identity on
  $\gpd$, then $\Phi:E_1^\sigma\to E_2^\sigma$ is also a fiberwise
  linear groupoid homomorphism covering the identity. 
\end{lemma}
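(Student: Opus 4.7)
The plan is to carry out a direct verification of both claims, relying on two ingredients: (i) the multiplicativity of $\sigma$, namely $\sigma(1_x) = 1$, $\sigma(gh) = \sigma(g)\sigma(h)$ and $\sigma(g^{-1}) = \sigma(g)$; and (ii) fiberwise linearity of the VB-groupoid structure on $E \rightrightarrows V$, in particular that $s,t,u,i$ are vector bundle morphisms and that $m(\lambda e_1, \lambda e_2) = \lambda\, m(e_1, e_2)$ for $\lambda \in \R$ and composable $(e_1, e_2)$. Since $\sigma$ is locally constant (with values in $\{\pm 1\}$), smoothness of all twisted structure maps is automatic.

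First I would check that \eqref{eq:27} defines a Lie groupoid on $E^\sigma$ over $\gpd$. The $\sigma$-composability condition $s_\sigma(e_1) = t_\sigma(e_2)$ for $e_1 \in E_g$, $e_2 \in E_h$ unfolds to $s(e_1) = \sigma(g) t(e_2)$, which is precisely the condition for $(e_1, \sigma(g) e_2)$ to be $m$-composable in the original groupoid, so $m_\sigma$ is well-defined. The unit axiom is immediate from $\sigma(1_x) = 1$, and the inverse axiom reduces to the original one using $\sigma(g^{-1}) = \sigma(g)$. The core computation is associativity: expanding both sides for $\sigma$-composable $e_i \in E_{g_i}$ and using $\sigma(g_1 g_2) = \sigma(g_1)\sigma(g_2)$ together with the scaling identity $m(\sigma(g_1) e_2, \sigma(g_1)\sigma(g_2) e_3) = \sigma(g_1)\, m(e_2, \sigma(g_2) e_3)$, both sides reduce to $m(e_1, m(\sigma(g_1) e_2, \sigma(g_1)\sigma(g_2) e_3))$ upon invoking associativity of $m$.

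To see that $E^\sigma$ is in fact a VB-groupoid, it suffices to note that the twisted structure maps remain vector bundle morphisms over the base structure maps of $\gpd$, since fiberwise multiplication by $\sigma(g) = \pm 1$ is linear and commutes with vector bundle morphisms; the fiberwise linearity of $m_\sigma$ follows from that of $m$ together with $\sigma(g) \cdot (\lambda e) = \lambda \cdot \sigma(g) e$. Finally, for the second statement, fiberwise linearity of $\Phi$ gives $\Phi(\sigma(g) e) = \sigma(g) \Phi(e)$, so compatibility of $\Phi$ with the original structure maps translates immediately into compatibility with the twisted ones; for instance, $\Phi(m_{1,\sigma}(e_1, e_2)) = \Phi(m_1(e_1, \sigma(g) e_2)) = m_2(\Phi(e_1), \sigma(g) \Phi(e_2)) = m_{2,\sigma}(\Phi(e_1), \Phi(e_2))$. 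I do not anticipate any real obstacle; the only point requiring care is ensuring that each instance of $m_\sigma$ in the associativity verification is converted to the original $m$ via the scaling identity, which is exactly where the cocycle property $\sigma(gh) = \sigma(g)\sigma(h)$ is essential.
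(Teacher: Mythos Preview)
Your proposal is correct and is precisely the direct verification the paper has in mind: the paper's own ``proof'' consists of the single sentence that the result ``holds at once'' from $E\rightrightarrows V$ being a VB-groupoid and $\sigma$ being a Lie groupoid homomorphism, and your argument spells out exactly that. The key points you identify---the cocycle identity $\sigma(gh)=\sigma(g)\sigma(h)$ for associativity and fiberwise linearity for the VB compatibility and for preserving homomorphisms---are the only ingredients needed.
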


Following \cite{bu_ca_or,Mack_Xu}, we use Lemma~\ref{lemma:aux_gpd} to
express $\sigma$-multiplicativity of a form in terms of Lie groupoid 
homomorphisms replacing the usual cotangent groupoid by 
$(T^*G)^\sigma\rightrightarrows A^*_G$ (see \cite{mack} for 
the definition of the cotangent groupoid $T^*G$). The following result
holds by the arguments in \cite[Lemma $3.6$]{bu_ca_or} with the obvious 
adaptations.

\begin{lemma}\label{lemma:sig-mult} Let $\sigma:G\to \{ \pm 1\}$ be a groupoid homomorphism, then
\begin{enumerate}[leftmargin=*]
\item a 1-form $\beta\in\Omega^1(G)$ is $\sigma$-multiplicative if and
  only if $\beta$ defines a Lie groupoid homomorphism $G\to(T^*G)^\sigma$, and 
\item a 2-form $\omega\in\Omega^2(G)$ is $\sigma$-multiplicative if
  and only if $\omega^\flat$ defines a Lie groupoid homomorphism
$$\xymatrix{TG \ar@<0.5ex>[d]\ar@<-0.5ex>[d]\ar[r]^{\omega^\flat} & (T^*G)^\sigma \ar@<0.5ex>[d]\ar@<-0.5ex>[d] \\
TM \ar[r] & A_G^*.}$$
\end{enumerate}
\end{lemma}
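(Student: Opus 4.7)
The plan is to follow the proof scheme of \cite[Lemma $3.6$]{bu_ca_or}, which treats the untwisted case ($\sigma\equiv 1$), and to track the insertion of $\sigma$ at each step where the cotangent groupoid structure maps appear. The key observation is that the twist $\sigma$ enters $\sigma$-multiplicativity precisely in the same place (and with the same sign) where it enters the multiplication $m_\sigma(\xi_1,\xi_2)=m(\xi_1,\sigma(g)\xi_2)$ and the source $s_\sigma=\sigma\cdot s$ of $(T^*G)^\sigma$.

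For part (1), I would start by noting that a $1$-form $\beta\in\Omega^1(G)$ is the same datum as a smooth section $\beta^\sharp:G\to T^*G$ of the cotangent projection; such a section automatically covers the identity on $G$ and is fiberwise linear, so checking that it is a Lie groupoid homomorphism $G\to(T^*G)^\sigma$ reduces to verifying compatibility with sources, units, and multiplication. Evaluating $\sigma$-multiplicativity at $(1_x,1_x)$ and using $\sigma(1_x)=1$ shows that $\beta|_{TM}=0$, so that $\beta^\sharp|_M$ takes values in $A_G^*\subset T^*G|_M$ and provides the base map $M\to A_G^*$; unit compatibility is then immediate. For the multiplication, the defining property of the cotangent groupoid together with the formula in equation~\eqref{eq:27} gives, for composable $(X,Y)\in TG^{(2)}$,
\[
m_\sigma\bigl(\beta^\sharp(g),\beta^\sharp(h)\bigr)\bigl(dm(X,Y)\bigr)=\beta_g(X)+\sigma(g)\,\beta_h(Y),
\]
which matches $\beta^\sharp(gh)\bigl(dm(X,Y)\bigr)=(m^*\beta)_{(g,h)}(X,Y)$ if and only if $\beta$ is $\sigma$-multiplicative. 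Source and target compatibility then follow from the multiplication identity by specializing to composable pairs of the form $(X,0_h)$ and $(0_g,Y)$; it is here that the twist $s_\sigma=\sigma\cdot s$ accounts for the $\sigma(g)$-factor landing on the correct side.

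For part (2), the argument is entirely parallel with $\beta^\sharp$ replaced by $\omega^\flat:TG\to T^*G$. Lemma~\ref{lemma:aux_gpd} ensures that $(T^*G)^\sigma\rightrightarrows A_G^*$ is a VB-groupoid, so that the groupoid homomorphism condition for $\omega^\flat$ makes sense with the depicted base map $TM\to A_G^*$. The latter arises because $\sigma$-multiplicativity, evaluated twice at the units, forces $\omega|_{TM\otimes TM}=0$. The multiplication compatibility for $\omega^\flat$ then unpacks, via the standard pairing of composable tangent vectors, into exactly the $\sigma$-multiplicativity identity
\[
(m^*\omega)_{(g,h)}(X_1\oplus Y_1,X_2\oplus Y_2)=\omega_g(X_1,X_2)+\sigma(g)\,\omega_h(Y_1,Y_2),
\]
for composable pairs in $TG^{(2)}$, which is the analogue of equation~\eqref{eq:mult_general} in degree $2$.

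The main obstacle is purely combinatorial bookkeeping: making sure that $\sigma(g)$ (and not $\sigma(h)$, nor $\sigma(gh)$) appears on the correct side of each identity, and that the twisted source and target behave consistently with the fact that $\beta^\sharp|_M$, respectively $\omega^\flat|_{TM}$, already take values in the untwisted $A_G^*$. This is precisely the content of the "obvious adaptations" alluded to after the statement of the lemma; Lemma~\ref{lemma:aux_gpd} is invoked implicitly throughout to guarantee that the twisted structure maps \eqref{eq:27} assemble into a genuine VB-groupoid, so that the standard pairing arguments of \cite{bu_ca_or} carry over verbatim.
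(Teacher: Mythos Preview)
Your proposal is correct and follows exactly the approach the paper indicates: the paper does not give a detailed proof, stating only that the result ``holds by the arguments in \cite[Lemma~3.6]{bu_ca_or} with the obvious adaptations,'' and your write-up carries out precisely those adaptations, tracking the $\sigma$-factor through the twisted structure maps \eqref{eq:27}. The only minor comment is notational: your use of $\beta^\sharp$ for the section $G\to T^*G$ is slightly nonstandard (one might simply write $\beta$ viewed as a map), and the phrase ``fiberwise linear'' is superfluous for such a section; neither affects the argument.
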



\end{document}